\numberwithin{equation}{section}
\pgfplotsset{compat=1.16}
\newtheorem{thm}{Theorem}[section]
\newtheorem{lemma}[thm]{Lemma}
\newtheorem{prop}[thm]{Proposition}
\newtheorem{defi}[thm]{Definition}
\newtheorem{coro}[thm]{Corollary}
\newtheorem{quest}[thm]{Question}
\newtheorem*{thm*}{Theorem}
\numberwithin{equation}{section}
\theoremstyle{remark}
\newtheorem{rem}[thm]{Remark}
\newtheorem{ex}[thm]{Example}
\newcommand{\A}{\mathcal{A}}
\newcommand{\B}{\mathcal{B}}
\newcommand{\Aop}{\mathbb{A}}
\newcommand{\V}{\vert}
\newcommand{\norm}{\Vert}
\newcommand{\R}{\mathbb{R}}
\newcommand{\N}{\mathbb{N}}
\newcommand{\Z}{\mathbb{Z}}
\renewcommand{\P}{\mathcal{P}}
\newcommand{\M}{\mathcal{M}}
\newcommand{\QA}{\mathcal{Q}_{\mathcal{A}}}
\newcommand{\Sphere}{\mathbb{S}}
\newcommand{\back}{\backslash}
\newcommand{\dx}{\,\textup{d}x}
\newcommand{\dt}{\,\textup{d}t}
\newcommand{\dy}{\,\textup{d}y}
\newcommand{\dz}{\,\textup{d}z}
\newcommand{\dH}{\,\textup{d}\mathcal{H}}
\newcommand{\Haus}{\mathcal{H}}
\newcommand{\dmu}{\,\textup{d}\mu}
\renewcommand{\phi}{\varphi}
\newcommand{\li}{\langle}
\newcommand{\re}{\rangle}
\newcommand{\weakto}{\rightharpoonup}
\newcommand{\weakstar}{\overset{\ast}{\rightharpoonup}}
\DeclareMathOperator{\Lin}{Lin}
\DeclareMathOperator{\curl}{curl}
\DeclareMathOperator{\dist}{dist}
\DeclareMathOperator{\divergence}{div}
\DeclareMathOperator{\sgn}{sgn}
\DeclareMathOperator{\Sim}{Sim}
\DeclareMathOperator{\sym}{sym}
\DeclareMathOperator{\spt}{spt}
\DeclareMathOperator{\loc}{loc}
\DeclareMathOperator{\id}{id}
\begin{document}
\title{$L^{\infty}$-truncation of closed differential forms}
\author[Schiffer]{Stefan Schiffer}
\address{Insitute for Applied Mathematics, University of Bonn, Endenicher Allee 60, 53115 Bonn, Germany} 
\date{15.02.2021}
\email{schiffer@iam.uni-bonn.de}
 \subjclass[2010]{49J45,26B25}
 \keywords{$\mathcal{A}$-quasiconvexity, Young measures, differential constraints, divergence free truncation} 

\begin{abstract}
In this paper, we prove that for each closed differential form $u \in L^1(\R^N,(\R^N)^{\ast} \wedge ... \wedge (\R^N)^{\ast})$, which is almost in $L^{\infty}$ in the sense that
\begin{equation*}
	\int_{\{y \in \R^N \colon \V u(y) \V \geq L \}} \V u(y) \V \dy< \varepsilon
\end{equation*}
for some $L>0$ and a small $\varepsilon >0$, we may find a closed differential form $v$, such that $\norm u - v \norm_{L^1}$ is again small, and $v$ is, in addition, in $L^{\infty}$ with a bound  on its $L^{\infty}$ norm depending only on $N$ and $L$. In particular, the set $\{ v \neq u\}$ has measure at most $C \varepsilon.$ We then look at applications of this theorem. We are able to prove that the $\A$-$p$-quasiconvex hull of a set does not depend on $p$. Furthermore, we can prove a classification theorem for $\A$-$\infty$-Young measures.
\end{abstract}
\maketitle

\section{Introduction} 
A basic question in the calculus of variations and real analysis is the following: Consider a linear differential operator $\A \colon C^{\infty}(\R^N,\R^d) \to C^{\infty}(\R^N,\R^l)$ of first order with constant coefficients, and a bounded sequence of functions $u_n \in L^1(\R^N,\R^d)$ which satisfy $\A u_n =0$ in the sense of distributions and are close to a bounded set in $L^{\infty}$, i.e. 
\begin{equation} \label{eq:almost_linfty} \lim_{n \to \infty} \int_{\{x \in \R^N \colon \V u_n(x) \V \geq L\}} \V u_n \V \, \dx=0
\end{equation}
for some $L>0$. Does there exist a sequence of functions $v_n$, such that $\A v_n =0$, $\norm v_n \norm_{L^{\infty}} \leq CL $ and $(u_n - v_n) \to 0$ in measure (in $L^1$)?

This question was answered first by \textsc{Zhang} in \cite{Zhang} for sequences of gradients ($u_n = \nabla w_n$), i.e. for the operator $\A= \curl$, which assigns to a function $u \colon \R^N \to \R^N$ the skew-symmetric $(N \times N)$-matrix with entries $\partial_i u_j - \partial_j u_i$. \textsc{Zhang's} proof, which builds on the works of \textsc{Liu} \cite{Liu} and \textsc{Acerbi-Fusco} \cite{Acerbi}, proceeds as follows. Denote by $M f$ the Hardy-Littlewood maximal function of $f \in L^1_{\loc}(\R^N,\R^d)$ and let $u_n = \nabla w_n$. The estimate \eqref{eq:almost_linfty} implies that the sets $X^n= \{ M( \nabla w_n)\geq L'\}$ have small measure for large $n$. One then uses (c.f. \cite{Acerbi}) that \begin{equation} \label{eq:AFestimate}
 \V w_n(x) - w_n(y) \V \leq C L' \V x -y \V ',\quad x,y \in \R^N \back X^n,
\end{equation}
i.e. $w_n$ is Lipschitz continuous on $\R^N \back X^n$. The fact that Lipschitz continuous functions on closed subsets of $\R^N$ can be extended to Lipschitz continuous functions on $\R^N$ with the same Lipschitz constant \cite{KB} yields the result.

In this paper, we show that the answer to the previously formulated question is also positive for sequences of differential forms and $\A=d$, the operator of exterior differentiation.

Let us denote by $\Lambda^r$ the r-fold wedge product of the dual space $(\R^N)^{\ast}$ of $\R^N$ and by $d \colon C^{\infty}(\R^N,\Lambda^r) \to C^{\infty}(\R^N,\Lambda^{r+1})$ the exterior derivative w.r.t. the standard Euclidean geometry on $\R^N$.

\begin{thm}[$L^{\infty}$-truncation of differential forms] \label{Thm:intro}
Suppose that we have a sequence $u_n \in L^1(\R^N,\Lambda^r)$ with $d u_n=0$ (in the sense of distributions), and that there exists an $L>0$ such that \begin{displaymath}
\int_{\{y \in \R^N \colon \V u_n(y) \V >L \}} \V u_n(y) \V \dy \longrightarrow 0 \quad \text{as } n \to \infty.
\end{displaymath}
There exists a constant $C_1=C_1(N,r)$ and a sequence $v_n \in L^{\infty}(\R^N,\Lambda^r)$ with $dv_n=0$ and \begin{enumerate} [i)]
\item $\norm v_n \norm_{L^{\infty}(\R^N,\Lambda^r)} \leq C_1 L$;
\item $\norm v_n - u_n \norm_{L^1(\R^N,\Lambda^r)} \to 0$ as $n \to \infty$;
\item $\V \{ y \in \R^N \colon v_n(y) \neq u_n(y) \} \V \to 0$.
\end{enumerate}
\end{thm}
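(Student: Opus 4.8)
The plan is to mimic the gradient case, with the exterior derivative $d$ playing the role of $\curl$ and a potential from the Poincaré lemma playing the role of $w_n$. The key structural fact is that, since $du_n=0$ on all of $\R^N$ and $u_n \in L^1$, one can (locally, or after a Hodge-type decomposition) write $u_n = d\omega_n$ for some $(r-1)$-form $\omega_n$, with $\omega_n$ controlled in terms of $u_n$; concretely I would take $\omega_n$ to be given by a convolution with a fundamental-solution kernel (a homotopy operator for $d$), so that $\omega_n \in W^{1,1}_{\loc}$ with $\nabla \omega_n$ essentially a Riesz-transform image of $u_n$, hence $\|M(\nabla\omega_n)\|$ is large only on a set of small measure whenever \eqref{eq:almost_linfty} holds. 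The subtlety compared to the scalar case is that $\nabla\omega_n$ contains more than just $d\omega_n = u_n$: it also contains the "co-exterior" part $\delta\omega_n$, which we do not control a priori, so one must either work with the Hodge system $(d,\delta)$ together, or choose the homotopy operator carefully (Poincaré/Bogovskii-type) so that the full gradient of the primitive is bounded by $Mu_n$ up to a harmless error.

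First I would set $L' = c(N,r) L$ and define $X^n = \{ M u_n > L'\} \cup \{|u_n| > L'\}$; by the weak-$(1,1)$ bound for the maximal function together with \eqref{eq:almost_linfty}, $|X^n| \to 0$. Second, on the good set $\R^N\setminus X^n$ I would establish that the primitive $\omega_n$ (suitably normalized) is Lipschitz with constant $\lesssim L'$, using the pointwise Campanato-type estimate: for $x,y\notin X^n$ the oscillation of $\omega_n$ on the ball through $x,y$ is controlled by $\int\!\!\!-\, |\nabla \omega_n|$, which in turn is controlled by $M u_n \le L'$ there — this is the analogue of \eqref{eq:AFestimate} and is where the choice of homotopy operator matters, since $\nabla\omega_n$ must be bounded, not merely $d\omega_n$. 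Third, I would invoke the Lipschitz extension theorem \cite{KB} to extend $\omega_n|_{\R^N\setminus X^n}$ to $\tilde\omega_n$ on all of $\R^N$ with the same Lipschitz constant, mollify slightly if needed so that $\tilde\omega_n$ is a legitimate $W^{1,\infty}$ form, and set $v_n := d\tilde\omega_n$. Then $dv_n = 0$ automatically, $\|v_n\|_{L^\infty} \lesssim L' \lesssim L$ gives (i), and $v_n = u_n$ a.e. on the set where $\tilde\omega_n = \omega_n$ and both are differentiable, i.e. on $\R^N\setminus X^n$ up to a null set, which gives (iii) since $|X^n|\to 0$; (ii) then follows from (iii), (i) and the equi-integrability built into \eqref{eq:almost_linfty} (the $L^1$ mass of $u_n$ on $X^n$ is small, and the $L^1$ mass of $v_n$ on $X^n$ is at most $C_1 L |X^n|\to 0$).

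The main obstacle I anticipate is precisely controlling the \emph{entire} gradient of the primitive rather than just its exterior derivative: unlike the curl case, where the Poincaré-lemma primitive $w_n$ has $\nabla w_n$ essentially equal to $u_n$, for general $r$ a naive primitive has an uncontrolled $\delta\omega_n$ component. I would resolve this either (a) by running the argument simultaneously on the Hodge pair and exploiting that $v_n$ only needs to match $u_n$, not the full system — i.e. truncate the primitive but only read off its $d$-part — or (b) by using a homotopy operator adapted to the star-shaped exhaustion of $\R^N$ (Cartan's formula $\omega = d(I u) + I(du)$ with $du=0$, so $\omega_n = d(Iu_n)$, and then estimate $\nabla(Iu_n)$ directly via the explicit kernel of $I$, which is a first-order smoothing operator with a Calderón–Zygmond-type gradient). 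A secondary technical point is localization: $I$ is naturally defined on star-shaped domains, so to get a global $\R^N$ statement one either works on large balls $B_{R_n}$ with $R_n\to\infty$ and a diagonal argument, or uses a translation/averaging trick to make the construction genuinely global; I expect the diagonal argument over an exhausting sequence of balls to be the cleanest, at the cost of some bookkeeping to ensure the constants stay uniform in $R_n$.
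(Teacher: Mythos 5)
Your plan is to reduce to the gradient case by producing a potential $\omega_n$ with $d\omega_n=u_n$, run the Acerbi–Fusco/Lipschitz-truncation argument on $\omega_n$, and read off $v_n=d\tilde\omega_n$. You correctly flag the central obstacle — controlling the full gradient $\nabla\omega_n$ rather than just $d\omega_n$ — but neither of your proposed resolutions closes the gap, and I believe the gap is genuine. If $\omega_n$ is produced by any reasonable homotopy/Bogovskii-type operator, then $\nabla\omega_n$ is a Calderón–Zygmund operator $T$ applied to $u_n$. Write $u_n=g_n+b_n$ with $|g_n|\le L$ and $\|b_n\|_{L^1}\to 0$ (this is exactly what the hypothesis gives). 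The piece $Tb_n$ is handled by the weak-$(1,1)$ bound, but $Tg_n$ is only in $BMO$: singular integrals are \emph{not} $L^\infty\to L^\infty$ bounded, and there is no pointwise domination $|Tg_n|\lesssim Mg_n$. Consequently you cannot conclude that $\{M(\nabla\omega_n)>CL\}$ has small measure from the smallness of $\{Mu_n>CL\}$, which is exactly what the Acerbi–Fusco step needs. Your option (a) — truncating the potential but only reading off the $d$-part — does not help, because the bad set of the Lipschitz truncation of $\omega_n$ is governed by the full $M(\nabla\omega_n)$, not by $Mu_n$; your option (b) — estimating $\nabla(Iu_n)$ from the explicit kernel — runs into the same obstruction, since the kernel of $\nabla I$ is of Calderón–Zygmund order $|x-y|^{-N}$, not weakly singular.

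The paper takes a fundamentally different, potential-free route that avoids this problem. It establishes a Lipschitz-type estimate \emph{directly for the closed form $u$}, via Stokes' theorem on simplices: if $Mu(x_i)\le\lambda$ at all $r+1$ vertices, then $\bigl\lvert\fint_{\Sim(x_1,\dots,x_{r+1})}u(\nu^r)\bigr\rvert\lesssim\lambda\max_{i,j}|x_i-x_j|^r$ (Lemma \ref{maximalf}). The good set is therefore defined in terms of $Mu$ itself — not of any singular-integral image — so the weak-$(1,1)$ and splitting arguments from the hypothesis apply verbatim. Second, instead of Kirszbraun applied to a scalar/vector potential, the paper proves a \emph{geometric Whitney extension theorem for closed forms} (Lemma \ref{extension}): using a Whitney decomposition of $X^C$, a partition of unity $\phi_j$, and projection points $x_j\in X$, one defines $\alpha=\sum_I\phi_{i_1}\,d\phi_{i_2}\wedge\dots\wedge d\phi_{i_{r+1}}\,(G(I)(\nu^r(x_{i_1},\dots,x_{i_{r+1}})))$; the estimate from Lemma \ref{maximalf} gives the $L^\infty$ bound on each summand, while closedness of the extension is verified by a telescoping argument using $\sum_j d\phi_j=0$ together with Stokes' theorem (Lemmas \ref{aux2}, \ref{aux3}). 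This extension produces a closed $L^\infty$ form agreeing with $u$ on the good set, with no potential in sight. The rest of the proof (density, weak-$*$ limits) is routine. So the correct takeaway is not that the potential route is merely harder — it genuinely fails at the $L^\infty\to BMO$ step — and the paper replaces it with an intrinsic Stokes-plus-Whitney construction.
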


An analogous version of Theorem \ref{Thm:intro} holds if $\R^N$ is replaced by the $N$-torus $T_N$ (c.f. Theorem \ref{main}) or by an open Lipschitz set $\Omega$ and functions $u$ with zero boundary data (c.f. Propostion \ref{main4}). Moreover, the result immediately extends to $\R^m$-valued forms by taking truncations coordinatewise (c.f. Proposition \ref{main5}).

In particular, the result of Theorem \ref{Thm:intro} includes a positive answer to the question previously raised for the differential operator $\A =\divergence$ after suitable identifications of $\Lambda^{N-1}$ and $\Lambda^N$ with $\R^N$ and $\R$, respectively.

One key ingredient in the proofs is a version of the Acerbi-Fusco estimate \eqref{eq:AFestimate} for simplices rather than pairs of points. For the estimate, let us consider $\omega \in C_c^1(\R^N,\Lambda^r)$ with $d\omega=0$ and let $D$ be a simplex with vertices $x_1,...,x_{r+1}$ and a normal vector $\nu^r \in \R^N \wedge... \wedge \R^N$ (c.f. Section \ref{SecStokes} for the precise definition). Assume that $M\omega (x_i) \leq L$
for $i=1,...,r+1$. Then \begin{equation} \label{Lipschitzversion}
\left \V  \int_D \omega ( \nu^r) \right  \V \leq C(N) L \sup_{1 \leq i,j \leq r+1} \V x_i -x_j \V ^r.
\end{equation}
The second ingredient is a geometric version of the Whitney extension theorem, which may be of independent interest.

Combining \eqref{Lipschitzversion} and the extension theorem, one easily obtains the assertion for smooth closed forms. The general case follows by a standard approximation argument.

Truncation results like the result by \textsc{Zhang} or Theorem \ref{Thm:intro} have immediate applications in the calculus of variations. In particular, they provide characterisations of the $\A$-quasiconvex hulls of sets (c.f. Section \ref{Aqchulls}) and the set of Young-measures generated by sequences satisfying $\A u_n=0$. The classical result for sequences of gradients (i.e. sequences of functions $u_n$ satisfying $\curl u_n=0$) goes back to \textsc{Kinderlehrer} and \textsc{Pedregal} \cite{Kinderlehrer,Pedregal}.
Here, we show the natural counterpart of their characterisation result, whenever the operator $\A$ admits the following $L^{\infty}$-truncation result:

We say that $\A$ satisfies the property (ZL) if for all sequences $u_n \in L^1(T_N,\R^d) \cap \ker \A$, such that there exists an $L>0$ with 
\begin{displaymath}
\int_{\{y \in T_N \colon \V u_n(y) \V >L \}} \V u_n(y) \V \dy \longrightarrow 0 \quad  \text{as } n \to \infty,
\end{displaymath}
there exists a $C=C(\A)$ and a sequence $v_n \in L^1(T_N,\R^d) \cap \ker \A$ such that \begin{enumerate} [i)]
\item $\norm v_n \norm_{L^{\infty}(T_N,\R^d)} \leq C L$;
\item $\norm v_n - u_n \norm_{L^1(T_N,\R^d)} \to 0$ as $n \to \infty$.
\end{enumerate}

By \textsc{Zhang} \cite{Zhang}, the property (ZL) holds for $\A=\curl$ and a version of Theorem \ref{Thm:intro} shows this for $\A=d$ (Corollary \ref{main2}). Further examples are shortly discussed in Example \ref{ex:op}.

For the characterisation of Young measures, recall that $\spt \nu$ denotes the support of a (signed) Radon measure $\nu \in \mathcal{M}(\R^d)$, and  for $f \in C(\R^d)$ \begin{displaymath}
\li \nu, f \re :=\int_{\R^d} f \textup{d}\mu.
\end{displaymath}
If the property (ZL) holds for some differential operator $\A$, then one is able to prove the following statement.
\begin{thm}[Classification of $\A$-$\infty$-Young measures] Let $\A$ satisfy (ZL).
A weak$*$ measurable map $\nu: T_N \to \M(\R^d)$ is an $\A$-$\infty$-Young measure if and only if $\nu_x \geq 0$ a.e. and there exists $K  \subset \R^d$ compact and $u \in L^{\infty}(T_N,\R^d) \cap \ker \A$ with \begin{enumerate} [i)]
\item $\spt \nu_x \subset K$ for a.e. $x \in T_N$;
\item $ \li \nu_x, id \re = u$ for a.e. $x \in T_N$;
\item $\li \nu_x, f \re \geq f(\li \nu_x, id \re)$ for a.e. $x \in T_N$ and  all continuous and $\A$-quasiconvex $f:\R^d \to \R$.
\end{enumerate}
\end{thm}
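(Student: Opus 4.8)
The plan is to establish the two implications of the equivalence separately, following the classical Kinderlehrer--Pedregal scheme but replacing the gradient truncation by the (ZL) property.

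\textbf{Necessity.} Suppose $\nu$ is an $\A$-$\infty$-Young measure, generated by a bounded sequence $u_n \in L^{\infty}(T_N,\R^d) \cap \ker \A$ with $\norm{u_n}_{L^\infty} \le R$. Nonnegativity $\nu_x \ge 0$ and the existence of a common compact support $K = \overline{B_R(0)}$ are immediate from the definition of a Young measure generated by an $L^\infty$-bounded sequence. For (ii), set $u := \li \nu_x, \id \re$; since $u_n \weakstar u$ in $L^\infty$ and $u_n \in \ker\A$ with $\A$ having constant coefficients, $\ker \A$ is weak$*$ closed, so $u \in \ker \A \cap L^\infty$. For (iii), fix any continuous $\A$-quasiconvex $f$. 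By the definition of $\A$-quasiconvexity (periodic test functions in $\ker\A$ with prescribed mean) together with a localization/blow-up argument at a.e.\ Lebesgue point $x$ of $\nu$, one obtains the Jensen-type inequality $\li \nu_x, f \re \ge f(\li \nu_x,\id\re)$; this is the standard lower semicontinuity computation and uses only boundedness of the generating sequence, not (ZL).

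\textbf{Sufficiency.} This is the substantial direction. Given $\nu$ with properties (i)--(iii), one must construct a generating sequence in $\ker \A$. The first step is to produce, for each fixed $x$ (or on small cubes), a \emph{homogeneous} $\A$-$\infty$-Young measure equal to $\nu_x$: by property (iii) and the Hahn--Banach / duality characterization of $\A$-quasiconvex functions (the biconjugate of the indicator-type functional), each $\nu_x$ lies in the closed convex hull generated by homogeneous $\A$-free oscillations with barycenter $u(x)$. One then patches these local measures over a fine partition of $T_N$ into small cubes, on each of which $u$ is approximately constant, using periodic $\ker\A$-test functions supported (periodically) in each cube; a diagonal argument yields a bounded sequence $w_n \in L^p(T_N) \cap \ker\A$ for every finite $p$ whose Young measure is $\nu$. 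The point where (ZL) enters is that this patched sequence is only bounded in $L^p$ (equivalently, satisfies \eqref{eq:almost_linfty} with $L$ comparable to $\sup_x \max(|K|, \norm u_\infty)$ plus a small tail from cube boundaries and the approximation of $u$), not in $L^\infty$; property (ZL) then furnishes $v_n \in L^\infty(T_N,\R^d)\cap\ker\A$ with $\norm{v_n}_\infty \le CL$ and $\norm{v_n - w_n}_{L^1} \to 0$. Since modification on a set of measure tending to zero does not change the generated Young measure, and the $L^\infty$ bound makes $\{v_n\}$ equiintegrable with uniformly bounded range, $\{v_n\}$ is an admissible generating sequence, so $\nu$ is an $\A$-$\infty$-Young measure.

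\textbf{Main obstacle.} The delicate part is the construction and patching of the local homogeneous Young measures while keeping the sequence in $\ker\A$: cutting off a $\ker\A$-field to a cube generically destroys the differential constraint, so one needs either a projection onto $\ker\A$ (via the associated potential operator / Fourier multiplier, at the cost of a controlled $L^p$ error and a small tail) or a careful use of the periodic structure so that the local oscillations glue into a globally $\A$-free field. Controlling these errors so that the resulting sequence still satisfies the smallness hypothesis \eqref{eq:almost_linfty} required by (ZL), uniformly as the partition is refined, is the crux; everything after invoking (ZL) is soft (equiintegrability plus the fact that altering a sequence on a null-measure-in-the-limit set preserves its Young measure). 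I also expect a minor technical point in checking that property (iii), stated for \emph{real-valued} $\A$-quasiconvex $f$, suffices to test against the vector-valued constraints needed in the duality step, which is handled by the standard truncation $f \mapsto \max(f, -m)$ and monotone convergence.
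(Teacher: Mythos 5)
Your overall architecture matches the paper's: a necessity/sufficiency split, with necessity being the soft lower-semicontinuity direction, and sufficiency built from (a) a homogeneous characterization, (b) patching over a fine cube decomposition, and (c) an invocation of (ZL) to pass from an ``almost $L^\infty$'' sequence to a genuine $L^\infty$ sequence without changing the generated Young measure. The necessity part is correct as sketched. The sufficiency sketch is also structurally right, and you correctly identified the main technical obstruction (cut-off destroys the constraint; fix it via the potential operator $\B$ for $\A$ and control the $L^1$ error).

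However, one point deserves a sharper statement. You present the ``first step'' -- that under hypotheses (i)--(iii) each $\nu_x$ (or, after averaging, each $\nu^n_i$) is a \emph{homogeneous} $\A$-$\infty$-Young measure -- as following from ``property (iii) and the Hahn--Banach/duality characterization of $\A$-quasiconvex functions,'' and you locate the use of (ZL) only in the final truncation of the patched sequence. In the paper this first step is an entire standalone theorem (Theorem \ref{hommain}, $H_\A(K)=\M^{\A qc}(K)$), whose proof itself requires (ZL) in several places: to obtain a uniform $L^\infty$ bound on a generating sequence for a homogeneous Young measure (Lemma \ref{prop1}), to prove that $H^x_\A(K)$ is \emph{convex} (where one glues two $\ker\A$ fields across a hyperplane using the potential $\B$, creating an error that is only ``almost $L^\infty$'' and is then repaired by (ZL)), and again inside the Hahn--Banach separation argument itself. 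So your proposal is not wrong, but its emphasis is off: (ZL) is not merely the last finishing move on the patched sequence -- it is the engine that makes the duality step (the identification of $\M^{\A qc}(K)$ with homogeneous $\A$-$\infty$-Young measures) work at all. If you were to write the proof out, you would need to prove Theorem \ref{hommain} (or at least cite it, noting it too needs (ZL)); the ``closed convex hull generated by homogeneous oscillations'' claim is precisely the nontrivial content there, not a black box you can call upon before the Hahn--Banach step.

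Two smaller observations. First, in the paper the cubes $Q^n_i$ are not where $u$ is ``approximately constant'' -- one first translates so that $\li\nu_x,\id\re=0$, and the general case is recovered by adding $u$ back at the end; your phrasing suggests you plan to approximate $u$ by piecewise constants, which is an alternative but then forces you to control a further $L^\infty$ error. Second, your remark that one must test with a countable dense family of continuous functions and use the Lebesgue point theorem to get (iii) at a.e.\ $x$ is exactly right and matches the paper's necessity argument.

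Verdict: correct approach, same as the paper; the gap is not a wrong step but an underestimate of what the ``homogeneous characterization'' step costs -- it is a theorem in its own right, and (ZL) is needed there, not just in the final patching.
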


We close the introduction with a brief outline of the paper. In Section \ref{secaux}, we introduce some notation, recall some basic facts from multilinear algebra and the theory of differential forms and prove estimate \eqref{Lipschitzversion}. Section \ref{secWhitney} is devoted to the proof of the geometric Whitney extension theorem. In Section \ref{Secmain}, the proof of the truncation result (and its local and periodic variant) is given. Section \ref{secappl} discusses the applications to $\A$-quasiconvex hulls and $\A$-Young measures. 
Many of the arguments here follow the arguments in \cite{Kinderlehrer}. The necessary adaptations in our setting are discussed in an Appendix.


\section{Preliminary results} \label{secaux}
\subsection{Notation} 
We consider  an open and bounded Lipschitz set $\Omega \subset \R^N$ and denote by $T_N$ the $N$-dimensional torus, which arises from identifying faces of $[0,1]^N$. We may identify functions $f \colon T_N \to \R^d$ with $\Z^N$-periodic functions $\tilde{f} \colon \R^N \to \R^d$, and vice versa. We write $B_\rho(x)$ to denote the ball with radius $\rho$ and centre $x$. Denote by $\mathcal{L}^N$ the Lebesgue measure and, for a set $X \subset \R^N$, \begin{displaymath}
\V X \V := \mathcal{L}^N(X).
\end{displaymath}
For a measure $\mu$ on $\R^N$ and a $\mu$-measurable set $A \subset \R^N$ with $0<\mu(A)<\infty$ define the average integral of a $\mu$-measurable function f via 
\begin{displaymath}
\fint_A f \dmu = \frac{1}{\mu(A)} \int_A f \dmu.
\end{displaymath}
For $k\in \N$ write $[k] = \{1,...,k\}$. For a normed vector space $V$ we denote by $V^{\ast}$ the dual space of $V$.

Define the space $\Lambda^r$ as the $r$-fold wedge product of $(\R^N)^{\ast}$, i.e. \begin{displaymath}
\Lambda^r = \underbrace{(\R^N) ^{\ast} \wedge ... \wedge (\R^N)^{\ast}}_ {\substack{r\text{ copies}}}
\end{displaymath}
and similarly the space $\Lambda_r$ as the $r$-fold wedge product of $\R^N$.
Then $\Lambda^r$ and $\Lambda_r$ are finite-dimensional vector spaces. For $\R^N$ denote by $\{e_i \}_{i \in [N] }$ the standard basis and by $\cdot$ the standard scalar product. For $(\R^N)^{\ast}$ denote by $\theta_1,...,\theta_N$ the corresponding dual basis of $(\R^N)^{\ast}$, i.e. $\theta_i$ is the map $y \mapsto y \cdot e_i$. 

For $k \in I_r := \{ l \in [N]^r \colon l_1 < l_2 <...<l_r\}$ the vectors \begin{equation} \label{dxkr}
 e^{k,r} = e_{k_1} \wedge e_{k_2} \wedge... \wedge e_{k_r}
\end{equation}
form a basis of $\Lambda_r$. Denote by $\cdot^r$ the scalar product with respect to this basis, i.e. for $k,l \in I_r$ \begin{displaymath}
e^{k,r} \cdot^r e^{l,r} = \left \{ \begin{array}{ll} 1 & k=l, \\ 0 & k\neq l.\end{array} \right.
\end{displaymath}
    This also provides us with a suitable norm on $\Lambda_r$, which we denote by $\norm \cdot \norm_{\Lambda_r}$. Similarly, using the standard basis of $(\R^n)^{\ast}$, we define a basis $\theta^{k,r}$ and a norm $\norm \cdot \norm_{\Lambda^r}$. Also note that for $0 \leq s \leq r$ there exists (up to sign) a natural map $\Lambda^r \times \Lambda_s \mapsto \Lambda^{r-s}$, as $\Lambda^s$ is the dual space of $V^s$ and $\Lambda^r= \Lambda^s \wedge \Lambda^{r-s}$. In particular, in the special case $s=1$ for $h_1,...,h_r \in \R^{N\ast}$ and $y \in \R^N$  \begin{equation} \label{s1}
(h_1 \wedge .... \wedge h_r)(y) = \sum_{i=1}^r (-1)^{i-1} h_i(y) h_1 \wedge ... \wedge h_{i-1} \wedge h_{i+1} ... \wedge h_r.
\end{equation}
In the case $s=r$ and for $h_1,...,h_r \in (\R^{N})^{\ast}$ and $y_1,...,y_r \in \R^N$ \begin{equation}\label{sr}
(h_1 \wedge .... \wedge h_r)(y_1 \wedge ... \wedge y_r) = \sum_{ \sigma \in S_r} \left( \sgn(\sigma) \prod_{i=1}^r h_i(y_{\sigma(i)})\right),
\end{equation}
where $S_r$ denotes the group of permutations of $\{1,...,r\}$. \eqref{sr} also gives us a representation of the map $\Lambda^r \times \Lambda_s \mapsto \Lambda^{r-s}$ as for $h \in \Lambda^r$, $x \in \Lambda_s$ we may consider the element of $\Lambda^{r-s}=(\Lambda_{r-s})^{\ast}$ defined by \begin{displaymath}
 z \longmapsto h ( x \wedge z), \quad z \in \Lambda_{r-s}.
\end{displaymath}
Let us shortly remark that this notation is slightly different to the usual notation for interior products.

Moreover, note that the space $\Lambda^N$ is isomorphic to $\R$ via the map $I^N$ defined by \begin{displaymath}
 a ~ \theta_1 \wedge ... \wedge \theta_N \longmapsto a \in \R.
\end{displaymath}

\subsection{Differential forms}
In the following, we will define all objects for an open set $\Omega \subset \R^N$, but these definitions are also valid for $\R^N$ and $T_N$ respectively.

We call a map $f \in L^1_{\loc}(\Omega,\Lambda^r)$ an \textbf{$r$-differential form} on $\Omega$. We define the space \begin{displaymath}
\Gamma = \bigcup_{r \in \N} C^{\infty}(\Omega,\Lambda^r).
\end{displaymath}

It is well-known (c.f \cite{Cartan,diffgeo}) that there exists a linear map $d\colon \Gamma \mapsto \Gamma$, called the \textbf{exterior derivative}  with the following properties \begin{enumerate}[i)]
\item $d^2 = d \circ d = 0$,
\item $d$ maps $C^{\infty}(\Omega,\Lambda^r)$ into $C^{\infty}(\Omega,\Lambda^{r+1})$,
\item We have the \textbf{Leibniz rule}: If $\alpha \in C^{\infty}(\Omega,\Lambda^r)$ and $\beta \in C^{\infty}(\Omega,\Lambda^s)$, then \begin{equation} \label{Leibniz}
d (\alpha \wedge \beta) = d \alpha \wedge \beta + (-1)^r \alpha \wedge \beta,
\end{equation}
\item $d: C^{\infty}(\Omega,\Lambda^0) \to C^{\infty}(\Omega,\Lambda^1)$ is the gradient via the identification $\Lambda^0 =\R$, $\Lambda^1 =  (\R^N)^{\ast} \cong \R^N$.
\end{enumerate}
This map $d$ has the following representation in terms of the standard coordinates (c.f. \cite{diffgeo}). Let $\omega \in C^{\infty}(\Omega,\Lambda^r)$, which, for some $a_k \in C^{\infty}(\Omega,\R)$, can be written as \begin{displaymath}
 \omega(y) = \sum_{k \in I_r} a_k(y) \theta^{k,r}.
\end{displaymath}
 Then \begin{equation} \label{coordinate}
d \omega(y) = \sum_{k \in I_r} \sum_{l \in [N]} \partial_l a_k(y) \theta_l \wedge \theta^{k,r}.
\end{equation}

\begin{rem} For a fixed $r \in \{0,...,N-1\}$ we can identify $d \colon C^{\infty}(\Omega, \Lambda^r) \mapsto C^{\infty}(\Omega, \Lambda^{r+1})$ with a differential operator $\A$. By definition, for $r=0$, $d$ can be identified with the gradient. For $r=1$, after a suitable identification of $\Lambda^2$ with $\R^{N \times N}_{skew}$, $d= \curl$, which is the differential operator mapping $u \in C^{\infty}(\Omega,\R^N)$ to $\curl u \in C^{\infty}(\Omega,\R^{N \times N}_{skew})$ defined by \begin{displaymath}
(\curl u )_{lk} = \partial_l u_k - \partial_k u_l.
\end{displaymath}
If $r=N-1$, after identifying $\Lambda^{N-1}$ with $\R^N$ and $\Lambda^N$ with $\R$, the differential operator $d$ becomes the divergence of a vector field which is defined for $u \in C^{\infty}(\Omega,\R^N)$ by \begin{displaymath}
\divergence u = \sum_{k=1}^N \partial_k u_k.
\end{displaymath}
\end{rem}

\begin{lemma} \label{prodrule} We have the following product rules for $d$: \begin{enumerate} [i)]
    \item Let $\omega \in C^1(\Omega,\Lambda^1)$, $z \in \R^N= \Lambda_1$. Then \begin{equation} \label{prod1}
    d (\omega(\cdot)(\cdot-z)) = \nabla \omega(\cdot) (\cdot-z) + \omega(\cdot),
    \end{equation}
    where we define $\nabla \omega(\cdot) (\cdot-z) \in C(\Omega,\Lambda^1)$ as follows: \\If $\omega = \sum_{i=1}^N \omega_i \theta_i$ and $(y-z) = \sum_{i=1}^N (y-z)_i e_i$, then \begin{displaymath}
    \nabla \omega(y-z) := \sum_{l=1}^N \sum_{i=1}^N \partial_l \omega_i (y-z)_i \theta_l.
    \end{displaymath}
    \item There is a linear bounded map $D^{1,r} \in \Lin((\Lambda^r \times \R^N)\times \R^N, \Lambda^r)$ such that for  $\omega \in C^1(\Omega,\Lambda^r)$, $z \in \R^N$ we have \begin{equation}
        d (\omega(\cdot)(\cdot-z)) = D^{1,r}( \nabla \omega,(\cdot-z)) + \omega(\cdot).
    \end{equation}
    \item There is a linear and bounded map $D^{s,r} \in \Lin((\Lambda^r \times\R^N) \times \Lambda_s, \Lambda^{r-s})$ such that for $\omega \in C^1(\Omega, \Lambda^r)$, $z \in \R^N$, $z_2 \in \Lambda_{s-1}$ \begin{equation} \label{productrule}
    d ( \omega(\cdot) ((\cdot-z)\wedge z_2)) = D^{s,r} (\nabla \omega, (\cdot-z)\wedge z_2) + (-1)^{s-1}\omega(\cdot) (z_2).
    \end{equation}
\end{enumerate}
\end{lemma}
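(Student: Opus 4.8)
The plan is to establish all three identities by direct computation in the standard coordinates of $\R^N$, using the coordinate formula \eqref{coordinate} for $d$, the Leibniz rule \eqref{Leibniz}, and the algebraic expansions \eqref{s1}--\eqref{sr} of the contraction maps $\Lambda^r\times\Lambda_s\to\Lambda^{r-s}$. Parts i) and ii) are the special cases $s=1$ (the latter moreover with $z_2$ a scalar) of iii), so in principle it suffices to prove iii); but I would dispatch i) by hand first as a warm-up, since it already displays the mechanism and the bookkeeping is most transparent there.

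For i), write $\omega=\sum_{i=1}^N\omega_i\,\theta_i$ with $\omega_i\in C^1(\Omega)$. Then $g:=\omega(\cdot)(\cdot-z)$ is the scalar function $g(y)=\sum_i\omega_i(y)(y_i-z_i)$, and since $d$ on a $0$-form is the gradient, $dg=\sum_l\partial_l g\,\theta_l=\sum_l\sum_i\bigl(\partial_l\omega_i\,(y_i-z_i)+\omega_i\,\delta_{li}\bigr)\theta_l=\nabla\omega(\cdot)(\cdot-z)+\sum_l\omega_l\,\theta_l$, which is \eqref{prod1}. For ii) and iii), write $\omega=\sum_{k\in I_r}a_k\,\theta^{k,r}$ with $a_k\in C^1(\Omega)$ and expand, using \eqref{s1} iteratively (equivalently \eqref{sr}), the contracted form $\omega(y)\bigl((y-z)\wedge z_2\bigr)$ as a finite sum $\sum_k a_k(y)\,P_k(y)\,\eta_k$, where each $\eta_k$ is a constant exterior form obtained from $\theta^{k,r}$ by a signed deletion of dual basis covectors against $z_2$, and each $P_k$ is affine in $y$, built from the entries of $(y-z)$. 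Now apply $d$: by \eqref{Leibniz} and \eqref{coordinate}, $d(a_kP_k\,\eta_k)=P_k\,(da_k)\wedge\eta_k+a_k\,(dP_k)\wedge\eta_k$. Summing the first terms over $k$ is, by construction, $D^{s,r}$ evaluated at $(\nabla\omega,(y-z)\wedge z_2)$; it involves only the first derivatives $\partial_l a_k$ and a fixed list of multilinear operations (the contraction, wedge products, signed relabellings) between finite-dimensional spaces, so it is a fixed bounded multilinear map, independent of $\omega$, $z$ and $z_2$, and into the appropriate exterior power. Summing the second terms over $k$ is purely algebraic, since each $dP_k$ is a constant combination of the covectors $\theta_l$ produced by differentiating the linear factor $(y-z)$; a careful sign count --- reinserting each such $\theta_l$ into its slot in $\theta^{k,r}$ undoes the sign incurred when $\theta^{k,r}$ was originally contracted against $(y-z)$ --- shows this sum reassembles into the residual term displayed in \eqref{prod1} and \eqref{productrule}.

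The only real difficulty is combinatorial: keeping the signs straight in the iterated contraction \eqref{sr} and in the reinsertion of the differentiated coordinate, and confirming that the residual term has exactly the stated form. A conceptually cleaner organisation, which I would at least mention, is via Cartan's homotopy formula: with $X$ the radial vector field $y\mapsto y-z$ and $\iota_X$ the interior product, $\omega(\cdot)(\cdot-z)=\iota_X\omega$ and $d\,\iota_X\omega=\mathcal{L}_X\omega-\iota_X d\omega$, where $-\iota_X d\omega$ is linear in $\nabla\omega$ and contributes to $D^{1,r}$, while $\mathcal{L}_X\omega$ splits into the directional derivative of the coefficients (again part of $D^{1,r}$) plus the homogeneity contribution from $\mathcal{L}_X\theta^{k,r}$, which is the residual $\omega$-term; the case of general $z_2$ then follows by wedging such identities and reapplying \eqref{Leibniz}. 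I would nevertheless present the elementary coordinate computation in full so that the lemma stays self-contained.
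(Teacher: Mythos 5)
Your approach is essentially the paper's own: verify i) by a direct coordinate computation, then obtain ii) and iii) by expanding with \eqref{s1}/\eqref{sr} and the Leibniz rule. The paper phrases this as ``ii) follows from i) using \eqref{s1}, and iii) from ii),'' which is exactly the bootstrap-in-coordinates mechanism you sketch, only more tersely; the Cartan homotopy formula you mention as an aside is a genuinely cleaner conceptual framing that the paper does not invoke.

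The one place I would push back is the unverified claim that ``this sum reassembles into the residual term displayed in \eqref{prod1} and \eqref{productrule}.'' It does not, unless $r=s$. Your sign bookkeeping is right --- reinserting each $\theta_l$ into its original slot in $\theta^{k,r}$ undoes the sign $(-1)^{i-1}$ picked up in the contraction --- but after the signs cancel you are left with one full copy of $\theta^{k,r}(z_2)$ for \emph{each} of the $r-s+1$ indices of $k$ that can be matched against $(y-z)$, so the residual is $(r-s+1)(-1)^{s-1}\omega(\cdot)(z_2)$, not $(-1)^{s-1}\omega(\cdot)(z_2)$. The homotopy formula you propose actually exposes this immediately: for the Euler field $X(y)=y-z$ and a \emph{constant} $r$-form $\omega$ one has $d\iota_X\omega=\mathcal{L}_X\omega=r\,\omega$, whereas ii) with $\nabla\omega=0$ would give just $\omega$; a bare-hands check is $\omega=\theta_1\wedge\theta_2$, $z=0$, where $\omega(\cdot)(\cdot)=y_1\theta_2-y_2\theta_1$ and $d(y_1\theta_2-y_2\theta_1)=2\,\theta_1\wedge\theta_2=2\omega$. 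This constant cannot be absorbed into $D^{s,r}$, which is by assumption linear in $\nabla\omega$. So there is a harmless typo in the lemma as stated (the later uses in Lemma \ref{aux3} only need the residual to be a fixed bounded multilinear image of $\omega$ and $z_2$, so nothing downstream breaks), but a careful version of your computation would run into this discrepancy, and you should either carry the factor $r-s+1$ or note it explicitly.
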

\begin{proof}
i) simply follows from a calculation, i.e., if as mentioned \begin{displaymath} 
\omega(y) = \sum_{i=1}^N \omega_i(y) \theta_i \quad \text{ and }(y-z) = \sum_{i=1}^N (y-z)_i e_i, \end{displaymath} then \begin{align*}
d(\omega(y)(y-z))& = \sum_{l=1}^N \partial_l(\omega(y)(y-z)) \theta_l \\
            &= \sum_{i,l=1}^N \partial_l \omega_i(y) (y-z)_i \theta_l+ \sum_{l=1}^N \omega_l(y) \theta_l,
\end{align*}
which is what we claimed. ii) then follows from i) and using \eqref{s1}. Likewise, iii) then follows from ii).
\end{proof}
\begin{defi}
For $\omega \in L^1_{loc}(\Omega,\Lambda^r)$ and $u \in L^1_{\loc}(\Omega, \Lambda^{r+1})$ we say that $d \omega = u$ in the sense of distributions  if for all $\phi \in C_c^{\infty}(\Omega, \Lambda^{N-r-1})$ we have \begin{displaymath}
\int_{\Omega} d \phi \wedge \omega   = (-1)^{N-r} \int_{\Omega } \phi \wedge u .
\end{displaymath}
\end{defi}
Note that this definition is equivalent to the following formula: For all $\phi \in C_c^{\infty}(\Omega,\Lambda^s) $ with $0 \leq s \leq  N-r-1$  \begin{displaymath}
(-1)^{r+1} \int_{\Omega} \omega \wedge d\phi= - \int_{\Omega} u \wedge \phi.
\end{displaymath}

\subsection{Stoke's theorem on simplices} \label{SecStokes}
We want to establish a suitable notion of Stoke's theorem for differential forms on simplices. Let $1 \leq r \leq N$ and  $x_1,...,x_{r+1} \in \R^N$. Define the simplex $\Sim(x_1,...,x_{r+1})$ as the convex hull of $x_1,...,x_{r+1}$. We call this simplex degenerate, if its dimension is strictly less than $r$. 

For $i \in \{1,...,r+1\}$ consider $\Sim(x_1,...x_{i-1},x_{i+1},...,x_{r+1}) =: \Sim^i(x_1,...x_{r+1})$. This is an $(r-1)$ dimensional face of $\Sim(x_1,...,x_{r+1})$ and a subset of the boundary of the manifold $\Sim(x_1,...,x_{r+1})$, which, for simplicity, will be  denoted by $\partial \Sim(x_1,...,x_{r+1})$. Suppose first that we are given the simplex \begin{displaymath}
\{\lambda \in [0,1]^r \ \colon \sum_{i=1}^r \lambda_i \leq 1\} \times \{0\}^{N-r} = \Sim(0,e_1,...,e_r) \subset \R^r \times \{0\}^{N-r} \subset \R^N.
\end{displaymath}
Then the classical version of Stoke's theorem on oriented manifolds reads that for every differential form $\tilde{\omega} \in C^1(\R^r \times \{0\}^{N-r}, \R^r \wedge ... \wedge \R^r)$ we have \begin{equation} \label{Stokes1}
\int_{\Sim(0,e_1,...,e_r)} d\tilde{\omega}(y) \dH^{r}(y) = \int_{\partial^{\ast} \Sim(0,e_1,...,e_r)} \tilde{\omega}(y) \wedge \nu(y) \dH^{r-1}(y).
\end{equation}
In \eqref{Stokes1}, $\nu(y)$ denotes the outer normal unit vector at $y\in \partial^{\ast} \Sim(0,e_1,...e_r)$ and $\partial^{\ast}$ is the reduced boundary of the simplex, where this outer normal exists (the interior of all $(r-1)$-dimensional faces). In our case, we are given a differential form with the underlying space being $\R^N$ and not $\R^r$ (the tangential space of the manifold/simplex), hence we can modify \eqref{Stokes1} to get for $\omega \in C^1(\R^N,\Lambda^r)$
\begin{align} \label{Stokes15} 
\int_{\Sim(0,e_1,...,e_r)}& d\tilde{\omega}(y) (e_1 \wedge ... \wedge e_r) \dH^r(y) \nonumber \\ &= \sum_{i=1}^r (-1)^i \int_{\Sim(0,...,e_{i-1},e_{i+1},...,e_r)} \omega(y) (e_1 \wedge ... \wedge e_{i-1} \wedge e_{i+1} \wedge ...\wedge e_r) \\
&+ \int_{\Sim(e_1,...,e_r)}  2^{-r/2}\omega(y)((e_2-e_1) \wedge (e_3-e_2) \wedge ... \wedge (e_{r}-e_{r-1})). \nonumber
\end{align}

Let us write for simplicity that for $x_1,...,x_{r+1} \in \R^N$ \begin{displaymath}
\nu^r(x_1,...,x_{r+1}) = ((x_2-x_1) \wedge (x_3 -x_2) \wedge  ... \wedge (x_{r+1}-x_r)) \in V_r.
\end{displaymath}
The map $\nu^r$ has the following properties:
\begin{enumerate} [i)]
\item $\nu^r$ is alternating, i.e. for a permutation $\sigma \in S_r$: \begin{displaymath}
\nu^r(y_1,...,y_{r+1}) = \sgn(\sigma) \nu^r(y_{\sigma(1)},...,y_{\sigma(r+1)}).
\end{displaymath}
 \item We have the relation \begin{displaymath}
 \norm \nu^r(y_1,...,y_{r+1}) \norm_{\Lambda_r} = r \Haus ^r(\Sim(y_1,...,y_{r+1})) .
 \end{displaymath}
\end{enumerate}
A linear change of coordinates from $\Sim(0,e_1,..,e_r)$ to $\Sim(x_1,...,x_{r+1})$ leads from \eqref{Stokes15} to the following: For $\omega \in C^{\infty}(\R^N,\Lambda^{r-1})$ and $x_1,...x_{r+1} \in \R^N$  \begin{align} \label{Stokes2}
&\frac{1}{r} \fint_{\Sim(x_1,...,x_{r+1})} d \omega(y) (\nu^r(x_1,...,x_{r+1})) \dH ^r(y) \\ \nonumber
~&= \sum_{i=1}^{r+1} \frac{(-1)^i}{r-1} \fint_{\Sim^i(x_1,...x_{r+1})} \omega(y)( \nu^{r-1} (x_1,...,x_{i-1},x_{i+1},...x_{r+1})) \dH  ^{r-1}(y),
\end{align}

\subsection{The maximal function}
The Hardy-Littlewood maximal function for $u \in L^1_{\loc}(\R^N,\R^d)$ is defined by \begin{displaymath}
Mu(x) = \sup_{R>0} \fint_{B_R(x)} \V u(y) \V \dy.
\end{displaymath}
Again, we can also define the maximal function for functions on the torus using the identification with periodic functions.

\begin{prop}  [Properties of the maximal function] (c.f. \cite{Stein}) \label{maximalfct}
$M$ is sublinear, i.e. $ M(u+v)(y) \leq Mu(y) + Mv(y)$ for all $u,v \in L^1_{\loc}(\R^N,\R^d)$ and $y\in \R^N$. Moreover, $M: L^p(\R^N,\R^d) \to L^p(\R^N,\R)$ is bounded for $1 < p \leq \infty$ and bounded from $L^1$ to $L^{1,\infty}$. In particular, this means that for $1 \leq p < \infty$
\begin{displaymath}
\left \V \{ Mu > \lambda \} \right \V  \leq C_p \lambda ^{-p} \norm u \norm_{L^p(\R^N,\R^d)}^p.
\end{displaymath}
\end{prop}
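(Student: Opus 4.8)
The statement collects classical facts about the Hardy--Littlewood maximal operator, so the plan is to verify the four assertions one at a time, with a Vitali-type covering lemma and (a special case of) the Marcinkiewicz interpolation theorem as the only external inputs. First I would dispatch sublinearity: for fixed $x$ and $R>0$ the triangle inequality gives $\fint_{B_R(x)}\V u+v\V \le \fint_{B_R(x)}\V u\V + \fint_{B_R(x)}\V v\V \le Mu(x)+Mv(x)$, and taking the supremum over $R>0$ finishes it. The $L^\infty$ bound is equally immediate: every average of $\V u\V$ over a ball is at most $\norm u\norm_{L^\infty(\R^N,\R^d)}$, so $M$ maps $L^\infty$ into $L^\infty$ with operator norm at most $1$.

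Next I would establish the weak-type $(1,1)$ bound, which also yields the displayed inequality in the case $p=1$. Fix $\lambda>0$ and set $E_\lambda := \{Mu>\lambda\}$. For each $x\in E_\lambda$ pick a radius $R_x>0$ with $\int_{B_{R_x}(x)}\V u\V > \lambda\,\V B_{R_x}(x)\V$. Given a compact $K\subset E_\lambda$, the balls $\{B_{R_x}(x)\}_{x\in K}$ cover $K$, so the $5r$-covering (Vitali) lemma produces a finite pairwise disjoint subfamily $B_{R_{x_1}}(x_1),\dots,B_{R_{x_m}}(x_m)$ whose fivefold dilates still cover $K$. Disjointness then gives $\V K\V \le 5^N\sum_{j}\V B_{R_{x_j}}(x_j)\V \le 5^N\lambda^{-1}\sum_{j}\int_{B_{R_{x_j}}(x_j)}\V u\V \le 5^N\lambda^{-1}\norm u\norm_{L^1}$, and a supremum over compact $K\subset E_\lambda$ yields $\V E_\lambda\V \le 5^N\lambda^{-1}\norm u\norm_{L^1}$.

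For $1<p<\infty$ I would then invoke Marcinkiewicz interpolation between the weak $(1,1)$ bound and the $L^\infty$ bound, or argue directly: splitting $u = u\indicator_{\{\V u\V>\lambda/2\}} + u\indicator_{\{\V u\V\le\lambda/2\}}$ gives $\{Mu>\lambda\}\subset\{M(u\indicator_{\{\V u\V>\lambda/2\}})>\lambda/2\}$, so the weak $(1,1)$ bound controls $\V\{Mu>\lambda\}\V$ by a constant times $\lambda^{-1}\int_{\{\V u\V>\lambda/2\}}\V u\V$; feeding this into the layer-cake identity $\norm Mu\norm_{L^p}^p = p\int_0^\infty\lambda^{p-1}\V\{Mu>\lambda\}\V\,\textup{d}\lambda$ and applying Fubini produces $\norm Mu\norm_{L^p}\le C_p\norm u\norm_{L^p}$. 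The displayed inequality for $1<p<\infty$ is then just Chebyshev: $\V\{Mu>\lambda\}\V \le \lambda^{-p}\norm Mu\norm_{L^p}^p \le C_p\lambda^{-p}\norm u\norm_{L^p}^p$. The torus version follows verbatim after identifying functions on $T_N$ with their $\Z^N$-periodic representatives on $\R^N$. I expect the one point needing genuine care to be the covering step --- selecting pairwise disjoint balls whose dilates still cover, and the reduction to compact subsets of $E_\lambda$ --- while the rest is routine bookkeeping; indeed, as the citation to Stein suggests, the result may simply be quoted, the above being the self-contained route.
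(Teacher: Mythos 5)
Your proof is correct and is the standard textbook argument (Vitali covering for the weak $(1,1)$ bound, trivial $L^\infty$ bound, interpolation or the layer-cake splitting for $1<p<\infty$, Chebyshev for the displayed inequality). The paper itself does not prove this proposition but simply cites Stein, so there is nothing to compare beyond noting that your self-contained route is exactly the one Stein's book takes.
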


If $u \in L^p_{\loc}(\R^N,\R^d)$ is a $\Z^N$-periodic function, i.e. $u \in L^p(T_N,\R^d)$, then \begin{displaymath}
\V \{M u >\lambda\} \cap [0,1]^N \V \leq C_p \lambda^{-p} \norm u \norm_{L^p([0,1]^N,\R^d)}^p.
\end{displaymath}

We now come to a key lemma for our main theorem. 

\begin{lemma} \label{maximalf}
There exists a constant $C = C(N,r)$ such that for all $\omega \in C^1(\R^N,\Lambda^{r})$, $\lambda >0$ with $d \omega=0$ and $x_1,...,x_{r+1} \in \{ M\omega \leq \lambda\}$ we have \begin{displaymath}
\left \V \fint_{\Sim(x_1,...,x_{r+1})}  \omega (\nu^r(x_1,...,x_{r+1}))  \right \V \leq C \lambda \max_{1\leq i,j \leq r+1} \V x_i -x_j \V ^r.
\end{displaymath}
\end{lemma}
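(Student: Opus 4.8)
The plan is to view the claim as the simplex-version of the Acerbi--Fusco inequality \eqref{eq:AFestimate}: for $r=1$ the simplex $\Sim(x_1,x_2)$ is a segment, $\nu^1(x_1,x_2)=x_2-x_1$, and $\fint_{\Sim(x_1,x_2)}\omega(\nu^1)\dH^1=w(x_2)-w(x_1)$ for a potential $dw=\omega$, so the statement is precisely \eqref{eq:AFestimate}. We may assume $\Sim:=\Sim(x_1,\dots,x_{r+1})$ is nondegenerate, since otherwise $\nu^r(x_1,\dots,x_{r+1})=0$ and both sides vanish. Fix the good vertex $x_1$, write $D:=\max_{i,j}|x_i-x_j|$, let $P$ be the affine $r$-plane spanned by $\Sim$, and for $k\ge0$ set $S_k:=x_1+2^{-k}(\Sim-x_1)$, so that $S_0=\Sim$, $S_{k+1}\subset S_k\subset B_{2^{-k}D}(x_1)$ and $\bigcap_kS_k=\{x_1\}$. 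Writing $\int_{S_k}\omega$ for the integral of the differential form $\omega$ over the oriented $r$-simplex $S_k$, property (ii) of Section \ref{SecStokes} yields $\fint_{\Sim}\omega(\nu^r)\dH^r=c_r\int_{S_0}\omega$ with $c_r=\|\nu^r\|/\Haus^r(\Sim)$ a fixed constant, while $\int_{S_k}\omega\to0$ since $\Haus^r(S_k)\to0$ and $\omega$ is bounded near $x_1$. Hence
\[
\fint_{\Sim}\omega(\nu^r)\dH^r\;=\;c_r\sum_{k\ge0}\Big(\int_{S_k}\omega-\int_{S_{k+1}}\omega\Big)\;=\;c_r\sum_{k\ge0}\int_{A_k}\omega,\qquad A_k:=S_k\setminus S_{k+1}.
\]

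The core is the increment bound $\big|\int_{A_k}\omega\big|\le C(N,r)\,(2^{-k}D)^r\lambda$; granting it, summing the geometric series $\sum_k2^{-kr}$ gives $\big|\fint_{\Sim}\omega(\nu^r)\dH^r\big|\le C(N,r)\,D^r\lambda$, which is the claim. Here $d\omega=0$ enters: the ``simplicial annulus'' $A_k$ is an $r$-chain lying in $P$ whose boundary is the $(r-1)$-cycle $\Gamma_k:=\partial S_k-\partial S_{k+1}$; since $\R^N$ is contractible, every $r$-chain $C$ with $\partial C=\Gamma_k$ is homologous to $A_k$, so Stokes gives $\int_{A_k}\omega=\int_C\omega$. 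One chooses a family $\{C_\xi\}_{\xi\in Q_k}$ of such chains, with $Q_k$ a disc of radius $\sim2^{-k}D$ in the $(N-r)$-dimensional orthogonal complement of $P$, obtained by pushing the interior of $A_k$ off $P$, arranged so that $U_k:=\bigcup_\xi C_\xi$ is a genuinely $N$-dimensional neighbourhood of $A_k$ with $|U_k|\sim(2^{-k}D)^N$ and $U_k\subset B_{C2^{-k}D}(x_1)$. Averaging $\int_{A_k}\omega=\int_{C_\xi}\omega$ over $\xi$ and applying Fubini/the area formula rewrites $\fint_{Q_k}\big(\int_{C_\xi}\omega\big)\,d\xi$ as an integral of $\omega$ against a bounded weight over $U_k$, and then
\[
\Big|\int_{A_k}\omega\Big|\;\le\;\frac{C}{|Q_k|}\int_{U_k}|\omega|\;\le\;\frac{C\,|U_k|}{|Q_k|}\fint_{U_k}|\omega|\;\le\;C\,(2^{-k}D)^r\,\frac{|B_{C2^{-k}D}(x_1)|}{|U_k|}\fint_{B_{C2^{-k}D}(x_1)}|\omega|\;\le\;C\,(2^{-k}D)^r\,M\omega(x_1)\;\le\;C\,(2^{-k}D)^r\lambda,
\]
using Proposition \ref{maximalfct} and $M\omega(x_1)\le\lambda$. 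When $r=N$ there are no transverse directions, $A_k$ is itself $N$-dimensional, and the bound is immediate from $A_k\subset B_{2^{-k}D}(x_1)$.

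The step I expect to be the main obstacle is the construction and analysis of the transverse family $\{C_\xi\}$: since all the $C_\xi$ carry the same fixed boundary $\Gamma_k$, the family is forced to pinch towards $\Gamma_k$, and a careless choice produces a change-of-variables density that blows up there like $(1-t)^{-(N-r)}$ in the coning parameter, so that $\int_{U_k}|\omega|\cdot(\text{weight})$ is no longer controlled by $\int_{U_k}|\omega|$. One must therefore either thicken $A_k$ non-uniformly (keeping the bulge bounded and supported away from a thin collar of $\Gamma_k$) and absorb the collar contribution by a further iteration of the same mechanism --- the collar being itself contained in balls about the good vertices at the appropriate scale --- or, equivalently, replace the shrinking-simplex telescoping by a recursive simplicial subdivision of $\Sim$ into comparably-sized subsimplices each adapted to a good vertex, in the spirit of the chaining arguments of \cite{Acerbi,Zhang}. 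The remaining pieces --- the Fubini computation for the weight and the summation of the geometric series --- are routine, and for $r=1$ the whole scheme degenerates to the classical telescoping over dyadic balls that proves \eqref{eq:AFestimate}.
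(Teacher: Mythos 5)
Your scheme---dyadic telescoping towards the single vertex $x_1$ followed by a transverse thickening of each annular chain $A_k$---is genuinely different from the paper's, but it has a flaw that is more fundamental than the pinching/Jacobian problem you flag at the end. The increment bound $\bigl|\int_{A_k}\omega\bigr|\le C(N,r)(2^{-k}D)^r\lambda$ is false as stated: it invokes the maximal function only at $x_1$, and $M\omega(x_1)\le\lambda$ does not control an integral of $\omega$ over an $r$-chain located at distance $\sim 2^{-k}D$ from $x_1$. Concretely, with $N=2$, $r=1$, $x_1=0$, $x_2=e_1$, $z_k=2^{-k}e_1$, take $\omega=dw$ with $w(y)=A\,\psi\bigl((y-z_k)/\varepsilon\bigr)$ for a $C^2$ bump $\psi$ supported in $B_1(0)$ with $\psi(0)=1$ and $\varepsilon\ll 2^{-k}$. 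Then $M\omega(0)\lesssim A\varepsilon\,2^{2k}$ and $M\omega(e_1)\lesssim A\varepsilon$, so the choice $A\sim\lambda\,2^{-2k}/\varepsilon$ keeps both $\le\lambda$, while $\int_{A_k}\omega=w(z_k)-w(z_{k+1})=A\sim\lambda\,2^{-2k}/\varepsilon\gg 2^{-k}\lambda$. Hence no construction of the family $\{C_\xi\}$, however careful about the collar, can salvage the increment bound: the information you would need is a maximal-function estimate at the intermediate point $z_k$, which is not among the hypotheses. Any correct proof must couple $M\omega$ at two (or more) of the given vertices globally rather than telescoping towards one of them, and neither of your proposed repairs (non-uniform thickening with collar absorption, or recursive subdivision ``adapted to good vertices'') obviously restores this, since the fine-scale pieces still live far from every $x_i$ except $x_1$.

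The paper's argument is a one-step argument rather than a chaining one, and that is precisely how it engages two vertices at once. With $R=\max_{i,j}|x_i-x_j|=|x_1-x_2|$, it applies Stokes' theorem \eqref{Stokes2} and $d\omega=0$ to the $(r+1)$-simplex $\Sim(x_1,\dots,x_{r+1},z)$ to rewrite $\fint_{\Sim}\omega(\nu^r)$ as a signed sum, over $i=1,\dots,r+1$, of $\fint_{\Sim(x_1,\dots,x_{i-1},z,x_{i+1},\dots)}\omega(\nu^r(\dots))$ for an auxiliary apex $z$, and then shows there is a single $z\in B_R(x_1)\cap B_R(x_2)$ making all $r+1$ integrals of $|\omega|$ small. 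The key is a Fubini/Chebyshev estimate over the pencil of $r$-planes $E(z)$ through a fixed $(r-1)$-face: since $\Haus^r\bigl(E(z)\cap B_R(x_1)\bigr)\le b_rR^r$, one gets
\[
\int_{B_R(x_1)}\int_{E(z)\cap B_R(x_1)}|\omega|(y)\,\dH^r(y)\,\dz \;\le\; \lambda\, b_N b_r R^{N+r},
\]
and Chebyshev makes the exceptional set of $z$ for any one face smaller than $|B_R(x_1)\cap B_R(x_2)|/(r+1)$, so a common good $z$ survives. The faces containing $x_1$ are handled through $M\omega(x_1)$, the opposite face through $M\omega(x_2)$, and the dimensional mismatch between $r$-dimensional surface integrals and $N$-dimensional maximal-function averages is resolved by averaging over a full pencil of $r$-planes rather than thickening a fixed chain with fixed boundary---so there is no pinching to contend with.
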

This lemma is so to speak our version of Lipschitz continuity. In particular, it has been proven (for example in \cite{Acerbi}) that for $u \in W^{1,1}_{loc}(\R^N,\R^m)$ and for $y_1,y_2 \in\{M \nabla u(x) \leq L \}$ \begin{displaymath}
\left \V \int_0^1 \nabla u(ty_1+(1-t)y_2) \cdot (y_1 -y_2) \dt \right \V = \V u(y_1) -u(y_2) \V \leq C L \V y_1 -y_2 \V.
\end{displaymath}
Hence, one should view Lemma \ref{maximalf} as a generalisation of this result.
\begin{proof} For simplicity write $\V \omega \V := \norm \omega \norm_{\Lambda^r}$.

It suffices to show that there exists $z \in \R^N$ such that \begin{equation} \label{claim1}
\sum_{i=1}^{r+1} \int_{\Sim(x_1,...x_{i-1},z,x_{i+1},...)} \V \omega \V \dH^{r}(y) \leq C \lambda \max_{i,j \in [r+1]} \V x_i -x_j \V^r 
\end{equation}
using that \begin{align} \label{Stokesappl}
\sum_{i=1}^{r+1} \int_{\Sim(x_1,...x_{i-1},z,x_{i+1},...)} \omega( \nu^{r-1}(x_1,...x_{i-1},z,x_{i+1},...)) \dH^{r}(y) \\ \nonumber= \int_{\Sim(x_1,...,x_{r+1})} \omega (\nu^{r-1}(x_1,...,x_{r+1})) \dH^{r}(y).
\end{align}
This equation \eqref{Stokesappl} can be verified by Stoke's theorem \eqref{Stokes2}, using that boundary terms with a simplex with vertex $z$ cancel out on the left-hand side of \eqref{Stokesappl}. 
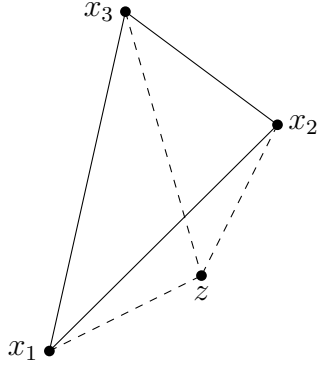
\begin{figure}
\centering
\begin{tikzpicture}[domain=0:6]
\coordinate[label=left:$x_1$] (A) at (1,1);
\coordinate[label=right:$x_2$] (B) at (4,4);
\coordinate[label=left:$x_3$] (C) at (2,5.5);
\coordinate[label=below:$z$] (D) at (3,2);
\fill (A) circle (2pt); 
\fill (B) circle (2pt);
\fill (C) circle (2pt);
\fill (D) circle (2pt); 
\draw (A) -- (B);
\draw (A) -- (C);
\draw (B) -- (C);
\draw[dashed] (A) -- (D);
\draw[dashed] (B) -- (D);
\draw[dashed] (C) -- (D);
\end{tikzpicture} 
\caption{Illustration of \eqref{Stokesappl} for $r=2$. The integrals on the dashed $1$-dimensional faces cancel out in \eqref{Stokesappl} after applying Stoke's theorem.} \label{picture1} \end{figure}
We now prove \eqref{claim1}. W.l.o.g. $R= \max_{i,j \in[r+1]} \V x_i-x_j \V = \V x_1-x_2 \V$. Note that there exists a dimensional constant $C_1$ such that \begin{displaymath}
 \V B_R(x_1) \cap B_R(x_2) \V \leq C_1 R^N.
\end{displaymath}
First, consider $x_1,...,x_r \in B_R(x_1)$. For $z \in B_R(x_1)$ define $E(z)$ to be the $r$-dimensional hyperplane going through $x_1,...,x_r$ and $z$. This is well-defined if $z$ is not in the $(r-1)$ dimensional hyperplane $F$ going through $x_1,...,x_r$. Note that for $z,\tilde{z} \notin F$ \begin{displaymath} z \in E(\tilde{z}) \Leftrightarrow \tilde{z} \in E(z). \end{displaymath}
\noindent As $M  \omega  (x_1) \leq \lambda$, we know that \begin{displaymath}
\int_{B_R(x_1)} \V \omega \V (z) \dz \leq \lambda b_N R^N,
\end{displaymath}
where $b_N$ is the volume of the $N$-dimensional unit ball $B_1(0)$. As $\Haus ^r(E(z) \cap B_R(x_1)) = b_r R^r$, it also follows that 
\begin{displaymath}
\int_{B_R(x_1)} \int_ {E(z) \cap {B_R(x_1)}} \V \omega \V (y) \dH^{r}(y) \dz \leq \lambda b_N b_r  R^{N+r}.
\end{displaymath}
Using that $\Sim(x_1,...,x_r,z) \subset E(z) \cap B_R(x_1)$, we conclude that for $\mu >0$  \begin{equation} \label{measureest}
\left \V \left \{z \in B_R(x_1) \colon \left \V \int_{\Sim(x_1,...,x_r,z)} \V  \omega \V (y) \dy \right \V \geq \mu \right \} \right \V  \leq \frac{\lambda b_r b _N R^{N+r}} {\mu}.
\end{equation}
Choose now $\mu^{\ast}= 2(r+1) b_r b_N R^r\lambda C_1^{-1}$. Plugging this into \eqref{measureest}, we see that the measure of this set is smaller that $ R^N(2 (r+1))^{-1}$.
Repeating this procedure for all $(r-1)$-dimensional faces of $\Sim(x_1,...,x_{r+1})$, we get that for  $i>1$
\begin{displaymath}
\left \V \left \{z \in B_R(x_1) \colon \left \V \int_{\Sim(x_1,...,x_{i-1},z,x_{i+1},...)} \V \omega \V (y) \dH^{r}(y) \right \V \geq  \mu^{\ast} \right \} \right \V  \leq 
\frac{C_1 R^N}{2 (r+1)},
\end{displaymath}
and for $i=1$
\begin{displaymath}
\left \V \left \{z \in B_R(x_2) \colon \left \V \int_{\Sim(z,x_2,...x_{r+1})} \V  \omega \V (y) \dH^{r}(y) \right \V \geq  \mu^{\ast} \right \} \right \V  \leq 
\frac{C_1 R^N}{2 (r+1)}.
\end{displaymath}
Hence, there exists $z \in B_R(x_1) \cap B_R(x_2)$ such that all the integrals in the sum of \eqref{claim1}  are smaller than $((2(r+1))^{-1} b _r b _N C_1^{-1})  R^r\lambda$. This is what we wanted to prove.
\end{proof}

\section{A Whitney-type extension theorem} \label{secWhitney}
First, let us recall the following Lipschitz extension theorem.
\begin{thm}[Lipschitz extension theorem] 
Let $X \subset \R^N$ be a closed set and $u \in C(X,\R^d)$ such that \begin{equation} \label{LipschitzonX}
\V u(x) - u (y) \V \leq L \V x - y \V.
\end{equation}
Then there exists a function $v \in C(\R^N,\R^d)$ with $v_{\V X} =u$ and such that $v$ is Lipschitz on $\R^N$ with Lipschitz constant at most $C(N)L$ (i.e. the Lipschitz constant does not depend on $X$).
\end{thm}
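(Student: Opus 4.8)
The plan is to prove the Lipschitz extension theorem (a classical result sometimes attributed to Kirszbraun–McShane–Whitney) by a direct construction followed by a concatenation argument. First I would handle the scalar case $d=1$. Given $u \in C(X,\R)$ with $|u(x)-u(y)| \leq L|x-y|$ for all $x,y \in X$, I would define the \emph{inf-convolution} (McShane) extension
\begin{displaymath}
v(x) := \inf_{y \in X} \bigl( u(y) + L\,|x-y| \bigr), \quad x \in \R^N.
\end{displaymath}
The key verifications are: (a) $v$ is finite — the infimum is bounded below using the Lipschitz bound and the triangle inequality after fixing one reference point $y_0 \in X$ (if $X = \emptyset$ the statement is trivial, or we may extend by a constant); (b) $v|_X = u$ — the bound $u(y)+L|x-y| \geq u(x)$ for $x \in X$ gives $v(x) \geq u(x)$, while taking $y=x$ gives $v(x) \leq u(x)$; (c) $v$ is $L$-Lipschitz on all of $\R^N$ — for $x,x'$ and any $y \in X$ we have $u(y)+L|x-y| \leq u(y)+L|x'-y| + L|x-x'|$, so taking the infimum over $y$ yields $v(x) \leq v(x') + L|x-x'|$, and symmetrically. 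This gives a Lipschitz constant exactly $L$ in the scalar case, with no dependence on $X$.

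For the vector-valued case $d \geq 2$, the simplest route that keeps control of constants is to apply the scalar construction componentwise: writing $u = (u^1,\dots,u^d)$, each component satisfies $|u^k(x)-u^k(y)| \leq |u(x)-u(y)| \leq L|x-y|$, so the componentwise extension $v = (v^1,\dots,v^d)$ is $L$-Lipschitz in each coordinate, hence Lipschitz on $\R^N$ with constant at most $\sqrt{d}\,L \leq C(N,d)L$. Since the paper only needs a dimensional constant, this suffices; alternatively one may invoke Kirszbraun's theorem to get the sharp constant $L$, but that requires more work (Helly-type intersection property of balls in Hilbert space) and is not needed here. I would remark that in the application (Theorem \ref{Thm:intro}) the target dimension is $\dim \Lambda^r = \binom{N}{r}$, which is itself a function of $N$, so absorbing $\sqrt d$ into $C(N)$ is harmless.

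The main obstacle — really the only subtle point — is ensuring the infimum in the scalar construction is not $-\infty$, i.e. that the family $\{u(y) + L|x-y|\}_{y \in X}$ is bounded below; this is where one genuinely uses that $u$ is globally $L$-Lipschitz on $X$ (not merely locally), via $u(y) \geq u(y_0) - L|y-y_0| \geq u(y_0) - L|y_0 - x| - L|x-y|$, so that $u(y)+L|x-y| \geq u(y_0) - L|y_0-x|$ uniformly in $y$. Continuity of $v$ is then automatic from the Lipschitz bound, and closedness of $X$ is actually not even required for the extension to exist and be Lipschitz — it only guarantees that "$v|_X = u$" is the natural restriction statement and that $v$ is continuous up to $X$; I would state it with $X$ closed to match the hypothesis but note this. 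No deep machinery is needed; the proof is a half-page once the inf-convolution formula is written down.
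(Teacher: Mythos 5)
Your proof is correct and complete: the McShane inf-convolution $v(x)=\inf_{y\in X}\bigl(u(y)+L|x-y|\bigr)$ does exactly what you claim in the scalar case, and the componentwise extension gives the vector-valued statement with constant $\sqrt{d}$, which is a dimensional constant in the paper's application since $d=\dim\Lambda^r=\binom{N}{r}$.

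Note, however, that the paper does not actually prove this theorem; it is stated as a known result with references to Kirszbraun \cite{KB} (for the sharp constant $C=1$) and to Whitney \cite{Whitney}. The paper explicitly singles out Whitney's proof, which decomposes $X^C$ into dyadic cubes with sidelength comparable to their distance to $X$, builds an associated partition of unity, and glues together locally constant (or locally affine) pieces. That geometric machinery is precisely what the paper then adapts in Section 3 (Lemma \ref{extension}) to construct the closed-form extension: there is no inf-convolution analogue for the constraint $dv=0$, so the cube-decomposition route is the one that generalizes. Your McShane argument is shorter and self-contained, but it is a dead end for the paper's purposes; Whitney's proof is longer but is the template for the paper's main construction. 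One small inaccuracy in your side remark: closedness of $X$ is not what makes $v|_X=u$ hold (that identity is automatic for any $X$); closedness is a harmless normalization here since an $L$-Lipschitz $u$ on $X$ extends uniquely to $\bar X$ anyway.
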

Of course, there are several ways to prove such a theorem, even with $C(N)=1$ \cite{KB}. However, \textsc{Whitney's} proof \cite{Whitney} plays with the geometry of $\R^N$ quite nicely. A similar geometric ideas lies behind our proof for closed differential forms. First, let us define an analogue of \eqref{LipschitzonX}.

Suppose that $X$ is a closed subset of $\R^N$, such that $X^C = \R^N \backslash X$ is bounded and $\V \partial X \V  =0$. \\
Let $u \in C_c^{\infty}(\R^N,\Lambda^r)$  with $du =0$. Let $L>0$ be such that  $\norm u \norm_{L^{\infty}(X)} \leq L$ and that for all $x_1,...,x_{r+1} \in X$ we have  \begin{equation} \label{Lipschitzprop}
\left \V \fint_{\Sim(x_1,...,x_{r+1})}  u(y) (\nu^r(x_1,...,x_{r+1})) \dy \right \V \leq L \max \V x_i -x_j \V ^r.
\end{equation}

\begin{lemma}[Whitney-type extension theorem] \label{extension}
There exists a constant $C= C(N,r)$ such that for all $u \in C_c^{\infty}(\R^N,\Lambda^r)$ and $X$ meeting the requirements above there exists $v \in L^1_{\loc}(\R^N,\Lambda^r)$ with \begin{enumerate} [i)]
\item $dv =0$ in the sense of distributions;
\item $v (y) = u (y) $ for all $y \in X$;
\item $ \norm v \norm_{L^{\infty}} \leq  CL$.
\end{enumerate}
\end{lemma}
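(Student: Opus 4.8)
The plan is to mimic Whitney's construction of the Lipschitz extension, but carried out at the level of the (formal) primitive of the closed form rather than the form itself, using the Stokes-type identity \eqref{Stokes2} and the simplex estimate \eqref{Lipschitzprop} as the substitute for Lipschitz continuity. First I would take a Whitney decomposition of the open set $X^C = \R^N \setminus X$ into a family of dyadic cubes $\{Q_j\}$ with $\diam Q_j \simeq \dist(Q_j, X)$ and bounded overlap of the slightly dilated cubes, together with an associated smooth partition of unity $\{\phi_j\}$ subordinate to $\{Q_j^{\ast}\}$ with $\norm \nabla^k \phi_j \norm_\infty \lesssim (\diam Q_j)^{-k}$. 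For each $j$ I would pick a point $p_j \in X$ with $\dist(p_j, Q_j) \lesssim \diam Q_j$, and — crucially — an affine-linear $\Lambda^{r-1}$-valued polynomial $A_j$ (a candidate "local primitive" of $u$ near $p_j$), chosen so that $dA_j$ is a good constant-coefficient approximation of $u$ at the scale of $Q_j$; the simplex estimate \eqref{Lipschitzprop} is exactly what controls how close the averages of $u$ over nearby simplices are to such an affine primitive. Then I would define
\begin{displaymath}
v := u \cdot \indicator_X + \left( d\Bigl( \sum_j \phi_j A_j \Bigr)\right) \cdot \indicator_{X^C},
\end{displaymath}
which is closed on $X^C$ (it is exact there) and equals $u$ on the interior of $X$; since $\V \partial X\V = 0$, the pointwise bound (iii) would follow once we bound $d(\sum_j \phi_j A_j)$ in $L^\infty$ on $X^C$.

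The two things to verify are the $L^\infty$ bound and the distributional identity $dv = 0$. For the $L^\infty$ bound: on a cube $Q_i$, only finitely many $\phi_j$ are nonzero, and since $\sum_j \phi_j \equiv 1$ we may write $d(\sum_j \phi_j A_j) = \sum_j d\phi_j \wedge A_j + \sum_j \phi_j\, dA_j = \sum_j d\phi_j \wedge (A_j - A_i) + \sum_j \phi_j\, dA_j$ on $Q_i$. The second sum is $\lesssim L$ by construction of the $A_j$. For the first sum, $\norm d\phi_j\norm_\infty \lesssim (\diam Q_i)^{-1}$, so it suffices to show $\norm A_j - A_i \norm_{L^\infty(Q_i)} \lesssim L \diam Q_i$ for neighbouring cubes; this is where \eqref{Lipschitzprop} enters, comparing averages of $u$ over simplices built from $p_i, p_j$ and nearby points of $X$ and estimating the difference of the two affine primitives by telescoping along a chain of neighbouring Whitney cubes — exactly the standard Whitney bookkeeping, but with the scalar oscillation bound replaced by the simplex bound. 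For the distributional identity: $v$ is genuinely closed (indeed locally exact) in the open set $X^C$ and equals the closed form $u$ on the open set $\operatorname{int} X$, so $dv = 0$ as a distribution on $\R^N \setminus \partial X$; because $v \in L^\infty$ (hence $L^1_{\loc}$) and $\V\partial X\V = 0$, a standard capacity/absolute-continuity argument (test functions can be cut off near the Lebesgue-null closed set $\partial X$ with gradients controlled in $L^1$) upgrades this to $dv = 0$ on all of $\R^N$. One should also check the matching is not merely "each piece closed" but that no distributional boundary layer appears on $\partial X$; the cleanest route is the null-set argument just sketched rather than an explicit interface computation.

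The main obstacle I expect is the selection and comparison of the local affine primitives $A_j$: unlike in Zhang's gradient case, where the primitive $w_n$ literally exists as a function and Acerbi–Fusco gives its Lipschitz bound directly, here there is no a priori global primitive of $u$, so the $A_j$ must be manufactured locally (e.g. by averaging $u$ against suitable simplices anchored at $p_j$, or via a Poincaré-type operator on a ball) and their mutual consistency on overlapping cubes must be extracted purely from \eqref{Lipschitzprop}. Getting the estimate $\norm A_j - A_i\norm_{L^\infty(Q_i)} \lesssim L\,\diam Q_i$ uniformly, with constants depending only on $N$ and $r$, is the technical heart of the argument; everything else (Whitney decomposition, partition of unity estimates, the null-set argument for the distributional identity) is routine. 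A secondary subtlety is handling degenerate simplices when the chosen anchor points happen to be (nearly) affinely dependent — one resolves this by the same freedom used in the proof of Lemma \ref{maximalf}, perturbing the auxiliary point within a ball of positive measure on which the relevant integral bounds hold.
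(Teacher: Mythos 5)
Your proposal takes a genuinely different route from the paper. The paper never constructs local primitives: it defines the extension on $X^C$ directly as
\begin{displaymath}
\alpha(y) = \sum_{(i_1,\ldots,i_{r+1})} \phi_{i_1}(y)\, d\phi_{i_2}(y) \wedge \cdots \wedge d\phi_{i_{r+1}}(y) \wedge \bigl(G(I)(\nu^r(x_{i_1},\ldots,x_{i_{r+1}}))\bigr),
\end{displaymath}
where $G(I)(\nu^r(\cdots))$ is exactly the scalar simplex average controlled by \eqref{Lipschitzprop}. This is manifestly an $r$-form; the Leibniz rule pushes the extra $d$ onto the single factor $\phi_{i_1}$, and then the relation $\sum_j d\phi_j = 0$ combined with Stokes' theorem on $(r+1)$-simplices forces the pointwise derivative to vanish. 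Your plan of building affine primitives $A_j$ with $dA_j \approx u$ near $p_j$ and setting $v = d(\sum_j \phi_j A_j)$ has a conceptual obstruction you acknowledge but do not resolve: for $r \geq 2$ a primitive of a closed $r$-form is determined only up to a closed $(r-1)$-form, and there is no canonical normalisation analogous to fixing $A_j(p_j) = u(p_j)$ in the classical $r=1$ Whitney argument. The hypothesis \eqref{Lipschitzprop} controls pairings of $u$ with $\nu^r$ over $r$-simplices — by Stokes, integrals of a primitive over $(r-1)$-cycles — not pointwise values of a primitive, so the estimate $\norm A_j - A_i\norm_{L^\infty(Q_i)} \lesssim L\, l(Q_i)$ that your $L^\infty$ bound hinges on is not obtainable from it in any obvious way. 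The paper's construction is precisely designed to avoid this problem.

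The more serious gap is the "null-set argument" for the distributional identity, which is simply false. Knowing $dv = 0$ on $\R^N \setminus \partial X$ with $v \in L^\infty$ and $\V\partial X\V = 0$ does not imply $dv = 0$ as a distribution on $\R^N$: the function $v = \indicator_{[0,\infty)}$ on $\R$ has $v' = 0$ off the null set $\{0\}$, yet $v' = \delta_0$. Your cutoff argument would need $\norm \nabla \eta_\varepsilon\norm_{L^1} \to 0$, which requires $\partial X$ to have zero $W^{1,1}$-capacity (essentially, $\mathcal{H}^{N-1}(\partial X) = 0$). But the assumption is only $\V\partial X\V = 0$, and in the intended application $X = \{Mu \leq \lambda\}$, so $\partial X$ is a level set of the maximal function and can easily carry positive $\mathcal{H}^{N-1}$-measure; then $\norm \nabla \eta_\varepsilon\norm_{L^1}$ stays bounded away from zero and the boundary term $\int v \wedge d\eta_\varepsilon \wedge \psi$ need not vanish. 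This is exactly the difficulty the paper singles out after Lemma \ref{aux2}: the sum defining $\alpha$ converges in $L^1$ but not in $W^{1,1}$, so nothing a priori controls a singular layer on $\partial X$. Lemma \ref{aux3} closes this gap by an inductive integration-by-parts argument showing that the distributional $dv$ is genuinely an $L^1$ function; some argument of this kind is unavoidable, and your proposal has nothing in its place.
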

\begin{rem}
The constant $C$ does not depend on the choice of $u$ or $X$, it is only important that the pair $(u,X)$ satisfies \eqref{Lipschitzprop}. The assumption that $X^C$ is bounded can be dropped, the assumption $\V \partial X \V =0$ makes the proof much easier.
\end{rem}
\begin{rem}
As one can see in the proof, the assumption $u\in C_c^{\infty}(\R^N,\Lambda^r)$ can be weakened to $u \in C_c^1(\R^N,\Lambda^r)$, as we only need the first derivative of $u$. However, it is important to remember that we cannot prove Lemma \ref{extension} for the even weaker assumption $u \in L^1_{\loc}$, as \eqref{Lipschitzprop} is not well-defined.
\end{rem}
For the proof we follow the classical approach by Whitney with a few little twists. First, we will define  the extension in \eqref{vdef}. Then we prove that $v$ satisfies properties i)-iii). ii) and iii) are quite easy to see from the definition of $v$, however it is hard to verify that i) holds. On the one hand, we show that the strong derivative of $v$ exists almost everywhere, namely in $\R^N \back \partial X$ and that $dv=0$ almost everywhere. Here we need $ \V \partial X \V =0$. On the other hand, we then prove that the distributional derivative $dv$ is in fact also an $L^1$ function, yielding that $dv=0$ in the sense of distributions.

We now start with the definition of the extension. Let us recall (c.f. \cite{Stein}) that for $X \subset \R^N$ closed we can find a collection of pairwise disjoint open cubes $\{Q^{\ast}_i\}_{i \in \N}$ such that \begin{itemize}
\item $Q^{\ast}_i$ are open dyadic cubes;
\item $\cup_{i \in \N}~ \bar{Q}^{\ast}_i = X^C$;
\item $\dist (Q^{\ast}_i,X) \leq l(Q^{\ast}_i) \leq 4 \dist(Q^{\ast}_i,X)$,;
where $l(Q^{\ast}_i)$ denotes the sidelength of the cube.
\end{itemize}
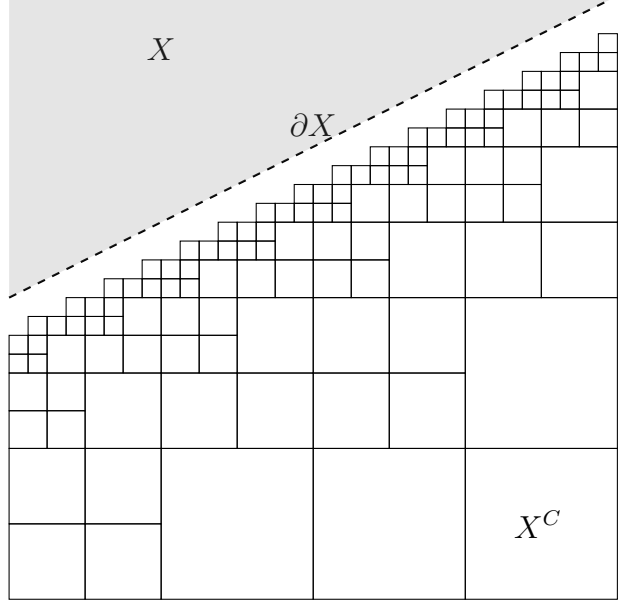
\begin{figure}
\centering
\begin{tikzpicture}[domain=0:8]
\draw[thick, dashed](0,4) -- (8,8]);
\coordinate[label=above:$\partial X$] (A) at (4,6);
\coordinate[label=above:$X$] (B) at (2,7);
\coordinate[label=right:$X^C$] (C) at (6.5,1);
\fill[color=gray, opacity=0.2] (0,4) -- (0,8) -- (8,8);
\foreach \x in {2,4,6} \draw (\x,0) rectangle (\x+2,2);
\draw (6,2) rectangle (8,4);
\foreach \x in {0,1} \foreach \y in {0,1} \draw (\x,\y) rectangle (\x+1,\y+1);
\foreach \x in {1,2} \draw (\x,2) rectangle (\x+1,3);
\foreach \x in {3,4,5} \foreach \y in {2,3} \draw (\x,\y) rectangle (\x+1,\y+1);
\foreach \x in {5,6,7} \draw (\x,4) rectangle (\x+1,5);
\draw (7,5) rectangle (8,6);
\foreach \x in {0,0.5} \foreach \y in {2,2.5} \draw (\x,\y) rectangle (\x+0.5,\y+0.5);
\draw (0.5,3) rectangle (1,3.5);
\draw (1,3) rectangle (1.5,3.5);
\foreach \x in {1.5,2,2.5} \foreach \y in {3,3.5} \draw (\x,\y) rectangle (\x+0.5,\y+0.5);
\draw (2.5,4) rectangle (3,4.5);
\draw (3,4) rectangle (3.5,4.5);
\foreach \x in {3.5,4,4.5} \foreach \y in {4,4.5} \draw (\x,\y) rectangle (\x+0.5,\y+0.5);
\draw (4.5,5) rectangle (5,5.5);
\draw (5,5) rectangle (5.5,5.5);
\foreach \x in {5.5,6,6.5} \foreach \y in {5,5.5} \draw (\x,\y) rectangle (\x+0.5,\y+0.5);
\foreach \x in {6.5,7,7.5} \draw (\x,6) rectangle (\x+ 0.5,6.5);
\draw (7.5,6.5) rectangle (8,7);
\foreach \x in {0,0.25} \foreach \y in {3,3.25} \draw (\x,\y) rectangle (\x+0.25,\y+0.25);
\foreach \x in {0.25,0.5,0.75} \draw (\x,3.5) rectangle (\x+0.25,3.75);
\draw (0.75,3.75) rectangle (1,4);
\foreach \x in {1,1.25} \foreach \y in {3.5,3.75} \draw (\x,\y) rectangle (\x+0.25,\y+0.25);
\foreach \x in {1.25,1.5,1.75} \draw (\x,4) rectangle (\x+0.25,4.25);
\draw (1.75,4.25) rectangle (2,4.5);
\foreach \x in {2,2.25} \foreach \y in {4,4.25} \draw (\x,\y) rectangle (\x+0.25,\y+0.25);
\foreach \x in {2.25,2.5,2.75} \draw (\x,4.5) rectangle (\x+0.25,4.75);
\draw (2.75,4.75) rectangle (3,5);
\foreach \x in {3,3.25} \foreach \y in {4.5,4.75} \draw (\x,\y) rectangle (\x+0.25,\y+0.25);
\foreach \x in {3.25,3.5,3.75} \draw (\x,5) rectangle (\x+0.25,5.25);
\draw (3.75,5.25) rectangle (4,5.5);
\foreach \x in {4,4.25} \foreach \y in {5,5.25} \draw (\x,\y) rectangle (\x+0.25,\y+0.25);
\foreach \x in {4.25,4.5,4.75} \draw (\x,5.5) rectangle (\x+0.25,5.75);
\draw (4.75,5.75) rectangle (5,6);
\foreach \x in {5,5.25} \foreach \y in {5.5,5.75} \draw (\x,\y) rectangle (\x+0.25,\y+0.25);
\foreach \x in {5.25,5.5,5.75} \draw (\x,6) rectangle (\x+0.25,6.25);
\draw (5.75,6.25) rectangle (6,6.5);
\foreach \x in {6,6.25} \foreach \y in {6,6.25} \draw (\x,\y) rectangle (\x+0.25,\y+0.25);
\foreach \x in {6.25,6.5,6.75} \draw (\x,6.5) rectangle (\x+0.25,6.75);
\draw (6.75,6.75) rectangle (7,7);
\foreach \x in {7,7.25} \foreach \y in {6.5,6.75} \draw (\x,\y) rectangle (\x+0.25,\y+0.25);
\foreach \x in {7.25,7.5,7.75} \draw (\x,7) rectangle (\x+0.25,7.25);
\draw (7.75,7.25) rectangle (8,7.5);
\end{tikzpicture} 
\caption{A collection of cubes $Q_j^{\ast}$ near the boundary (up to a certain size).} \label{picture2} \end{figure}

Choose $0 < \varepsilon < 1/4$ and define another collection of cubes by $Q_i= (1 + \varepsilon) Q_i^{\ast}$ (cube with the same center and sidelength $(1+\varepsilon) l(Q^{\ast}_i)$). Then 
\begin{itemize}
\item $\cup_{i \in \N}~ Q_i = X^C$;
\item For all $i \in \N$, the number of cubes $Q_j$ such that $Q_i \cap Q_j \neq \emptyset$ is bounded by a dimensional constant $C(N)$;
\item In particular, all $x \in \R^N$ are only contained in at most $C(N)$ cubes $Q_i$;
\item The distance to the boundary is again comparable to the sidelength, i.e. \begin{displaymath}
1/2 \dist (Q_i,X) \leq l(Q_i) \leq 8 \dist(Q_i,X). 
\end{displaymath}
\end{itemize}
Note that if $X$ is $\Z^N$-periodic, then also $Q_i$ can be chosen to be $\Z^N$ periodic (initially, we have a collection of dyadic cubes). 

Now consider $\phi \in C_c^{\infty}((-1-\varepsilon,1+\varepsilon)^N,[0,\infty))$ with $\phi =1$ on $(-1,1)^N$. We can rescale $\phi$ such that we obtain functions $\phi^{\ast}_j \in C_c^{\infty}(Q_j)$ with $\phi^{\ast}_j =1$ on $Q^{\ast}_j$. Define the partition of unity on $X^C$ by \begin{displaymath}
\phi_j = \frac{\phi^{\ast}_j}{\sum_{i \in \N} \phi_i^{\ast}}.
\end{displaymath}
Note that $0 \leq \phi_j \leq 1$ and that there exists a constant $C>0$ such that for all $j \in \N$ \begin{displaymath}
\V \nabla \phi_j \V \leq C/8~ l(Q_j)^{-1} \leq C \dist(Q_j,X)^{-1}.
\end{displaymath}
For each cube $Q_i$, we may find an $x \in X$ such that $\dist(Q_i,x) = \dist(Q_i,X)$. Denote this $x$ by $x_i$. For a multiindex $I=(i_1,...,i_{r+1}) \in \N^{r+1}$, define \begin{displaymath}
G(x_{i_1},...,x_{i_{r+1}})= G(I) := \fint_{\Sim(x_{i_1},...,x_{i_{r+1}})} \omega(y) \dy.
\end{displaymath}

We now define the differential form $\alpha \in L^1 (\R^N,\Lambda^r)$ by
\begin{equation} \label{alpha}
\alpha(y) := \sum_{I \in \N^{r+1}} \phi_{i_1} d\phi_{i_2}\wedge ... \wedge d\phi_{i_{r+1}} \wedge (G(I)(\nu^r(x_{i_1},...,x_{i_{r+1}}))).
\end{equation}
Note that in this setting $G(I)(\nu^r(...)) \in \R = \Lambda^0$.

We claim that the function $v \in L^1_{\loc} (\R^N,\Lambda^r)$ given by \begin{equation}
\label{vdef}
v(y) := \left\{ \begin{array}{ll} u(y) & y \in X, \\ (-1)^{r} \alpha(y) & y \in X^C \end{array} \right.
\end{equation}
is the function satisfying all the properties of Lemma \ref{extension}.

\begin{lemma} \label{aux1}
The differential form $\alpha$ defined in \eqref{alpha} satisfies $\alpha \in L^1(X^C, \Lambda^r)$ and the sum in  \eqref{alpha} converges pointwise and in $L^1$.
\end{lemma}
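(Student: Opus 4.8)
The plan is to prove the two assertions --- pointwise convergence, and convergence in $L^1$ together with $\alpha \in L^1(X^C,\Lambda^r)$ --- by exploiting the two structural features of the modified Whitney cubes $\{Q_i\}$ recorded above: their bounded overlap, and the comparability of sidelengths of neighbouring cubes. For the pointwise statement I would first note that each $\phi_j^{\ast}$, hence each $\phi_j$, and hence each $d\phi_j$, is supported in the open cube $Q_j$; therefore a summand in \eqref{alpha} indexed by $I=(i_1,\dots,i_{r+1})$ vanishes at a point $y$ unless $y\in Q_{i_k}$ for every $k=1,\dots,r+1$. Since each $y\in X^C$ lies in at most $C(N)$ of the cubes, at most $C(N)^{r+1}$ multiindices contribute at $y$, so the series \eqref{alpha} is a \emph{finite} sum at every point of $X^C$. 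Pointwise convergence, and well-definedness of $\alpha$ on $X^C$, is then immediate.

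For the $L^1$ bound I would estimate termwise on each cube. Since $\bigcup_i Q_i = X^C$ we have $\sum_i \indicator_{Q_i}\ge 1$ on $X^C$, so $\int_{X^C}\V\alpha\V\dy \le \sum_i\int_{Q_i}\V\alpha\V\dy$. Fixing $i$ and setting $N(i):=\{j: Q_j\cap Q_i\neq\emptyset\}$ (so $\#N(i)\le C(N)$), on $Q_i$ only the summands $T_I$ with all entries of $I$ in $N(i)$ are nonzero. For such an $I$ I would bound $T_I$ pointwise on $Q_i$ by combining three facts: $\V\phi_{i_1}\V\le 1$; the gradient bound $\V d\phi_{i_k}\V\le C\,l(Q_{i_k})^{-1}\le C(N)\,l(Q_i)^{-1}$ (using comparability of sidelengths of neighbouring cubes); and, since $\nu^r(x_{i_1},\dots,x_{i_{r+1}})$ is a constant multivector,
\begin{equation*}
\V G(I)(\nu^r(x_{i_1},\dots,x_{i_{r+1}}))\V = \left\V \fint_{\Sim(x_{i_1},\dots,x_{i_{r+1}})} u(y)\,(\nu^r(x_{i_1},\dots,x_{i_{r+1}}))\dy\right\V \le L \max_{k,l}\V x_{i_k}-x_{i_l}\V^{r},
\end{equation*}
which is exactly hypothesis \eqref{Lipschitzprop}. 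As $x_{i_k}$ is a nearest point of $X$ to $Q_{i_k}$ and all the $Q_{i_k}$ meet $Q_i$, a short geometric estimate gives $\max_{k,l}\V x_{i_k}-x_{i_l}\V \le C(N)\,l(Q_i)$. Multiplying the three bounds, the powers of $l(Q_i)$ cancel and $\V T_I\V\le C(N,r)L$ on $Q_i$, whence $\int_{Q_i}\V\alpha\V\dy\le C(N)^{r+1}C(N,r)L\,\V Q_i\V$.

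Summing over $i$ and using bounded overlap together with the boundedness of $X^C$, $\sum_i\V Q_i\V = \int_{X^C}\sum_i\indicator_{Q_i}\le C(N)\V X^C\V<\infty$, hence $\int_{X^C}\V\alpha\V\dy\le C(N,r)L\,\V X^C\V<\infty$, i.e. $\alpha\in L^1(X^C,\Lambda^r)$. The same termwise bounds give $\sum_I\norm T_I\norm_{L^1(X^C)}\le \sum_i C(N)^{r+1}C(N,r)L\,\V Q_i\V<\infty$, so the series $\sum_I T_I$ converges absolutely, hence converges, in the Banach space $L^1(X^C,\Lambda^r)$; since its partial sums also converge pointwise everywhere on $X^C$, the $L^1$-limit coincides a.e. with $\alpha$. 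This gives convergence of \eqref{alpha} in $L^1$ as well.

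The part I expect to require the most care is the purely geometric bookkeeping for the modified Whitney decomposition: that $Q_i\cap Q_j\neq\emptyset$ forces $l(Q_j)$ and $l(Q_i)$ to be comparable up to a dimensional constant, and that consequently the chosen nearest points $x_i,x_j\in X$ satisfy $\V x_i-x_j\V\le C(N)l(Q_i)$. These are standard Whitney-covering facts but must be tracked explicitly, and together with the combinatorial factor $C(N)^{r+1}$ they are precisely what produce the dimensional constants above; everything else is bounded-overlap summation and the already-proven estimate \eqref{Lipschitzprop}.
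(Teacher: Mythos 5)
Your proof is correct and follows essentially the same strategy as the paper's: bounded overlap of the modified Whitney cubes, the gradient bound $\V d\phi_j\V\le C\,l(Q_j)^{-1}$, comparability of sidelengths of neighbouring cubes (hence $\V x_{i_k}-x_{i_l}\V\le C\,l(Q_{i_1})$), and a termwise $L^\infty$ bound on each cube followed by summation against $\V Q_i\V$. The one difference is cosmetic but worth noting: to bound the coefficient you invoke \eqref{Lipschitzprop} directly, obtaining $\V G(I)(\nu^r)\V\le L\max\V x_{i_k}-x_{i_l}\V^r\le C L\,l(Q_{i_1})^r$, which gives each summand the bound $C(N,r)L$. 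The paper's proof of this particular lemma instead uses the cruder splitting $\V G(I)\V\le\norm u\norm_{L^\infty}$ and $\norm\nu^r\norm_{\Lambda_r}\le C\,l(Q_{i_1})^r$, which also yields $\alpha\in L^1$ but with the $u$-dependent constant $\norm u\norm_{L^\infty}$; the sharper $CL$ estimate based on \eqref{Lipschitzprop} is deferred in the paper to the final $L^\infty$ bound on $v$ at the end of Section \ref{secWhitney}. Your version proves both at once, which is a mild tightening rather than a different route.
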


\begin{proof}
Pointwise convergence is clear, as for fixed $y \in X^C$ only finitely many summands are nonzero in a neighbourhood of $y$ ($\phi_i$ is only nonzero in $Q_i$ and any point is only covered by at most $C(N)$ cubes). For $L^1$ convergence fix some $i_1 \in \N$. Note that there are at most $C(N)^r$ summands in $i_2,...,i_{r+1}$, which are nonzero, as $Q_{i_1}$ only intersects with $C(N)$ other cubes. Furthermore, note that for all $i_l$ with $Q_{i_l} \cap Q_{i_1} \neq \emptyset$ \begin{displaymath}
\norm d \phi_{i_l} (y) \norm_{\Lambda^1} \leq C \dist(y,X) \leq C l(Q_{i_1})^{-1}.
\end{displaymath}
Moreover, we can bound $\nu^r$ by \begin{displaymath}
\norm \nu^r(x_{i_1},...,x_{i_{r+1}}) \norm_{\Lambda_r} \leq \max_{a,b \in \{i_1,..,i_{r+1}\}} \V x_a -x_b \V^r \leq C l(Q_{i_1})^r.
\end{displaymath}
Hence, we can bound the $L^{\infty}$-norm of a nonzero summand of \eqref{alpha} by $C \norm u \norm_{L^{\infty}}$, as $\V G(I) \V \leq \norm u \norm_{L^{\infty}}$. As the support of the summand is contained in $Q_i$, we have that its $L^1$ norm is bounded by \begin{displaymath}
C \norm u \norm_{L^{\infty}} \V Q_{i_1} \V.
\end{displaymath}
Remember that any point in $X^C$ is covered by only $C(N)$ cubes, such that the sum of $\V Q_i \V$ is bounded by $C(N) \V X^C \V$. Hence, the sum in \eqref{alpha} converges absolutely in $L^1$ and its $L^1$ norm is bounded by $C(N)^{r+1} C \norm u \norm_{L^{\infty}} \V X^C \V$.
\end{proof}

\begin{lemma} \label{aux2}
The function $v$ is strongly differentiable almost everywhere and satisfies $dv(y) =0$ for all $y \in \R^N \back \partial X$.
\end{lemma}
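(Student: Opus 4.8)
The plan is to show that on the open set $\R^N \setminus \partial X$ the function $v$ agrees locally with a smooth (or at least $C^1$) closed form, so that its classical exterior derivative exists there and vanishes. On the interior of $X$ this is immediate: $v = u$ there and $du = 0$ by hypothesis. The work is on the open set $\mathrm{int}(X^C) = X^C$ (recall $|\partial X| = 0$, so $\R^N \setminus \partial X = \mathrm{int}(X) \cup X^C$ up to a null set), where $v = (-1)^r \alpha$ and $\alpha$ is the locally finite sum \eqref{alpha}. Fix $y_0 \in X^C$; only finitely many $\phi_{i}$ are nonzero near $y_0$, so near $y_0$ the form $\alpha$ is a finite sum of smooth forms, hence smooth, and I may compute $d\alpha$ termwise.

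The key computation is to apply the Leibniz rule \eqref{Leibniz} to each summand $\phi_{i_1}\, d\phi_{i_2} \wedge \cdots \wedge d\phi_{i_{r+1}} \wedge \bigl(G(I)(\nu^r(x_{i_1},\dots,x_{i_{r+1}}))\bigr)$. Since $G(I)(\nu^r(\cdots)) \in \R = \Lambda^0$ is a constant and $d(d\phi_{i_l}) = 0$, the only surviving term when we differentiate is $d\phi_{i_1} \wedge d\phi_{i_2} \wedge \cdots \wedge d\phi_{i_{r+1}} \cdot G(I)(\nu^r(x_{i_1},\dots,x_{i_{r+1}}))$. Summing over $I \in \N^{r+1}$, one has to recognise that the result telescopes to $0$. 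This is where the definition of $G$ via averaged integrals over simplices, together with the antisymmetry property i) of $\nu^r$ and identity \eqref{Stokes2}, is used: because $\sum_i \phi_i \equiv 1$ on $X^C$, we have $\sum_i d\phi_i = 0$ there, and the alternating structure of $\nu^r$ lets us rewrite the $(r+1)$-fold sum so that each "coboundary" contribution cancels against another. Concretely I would fix the indices $i_2,\dots,i_{r+1}$ (or a suitable subset), use antisymmetry to symmetrise $G(I)\,\nu^r(x_{i_1},\dots,x_{i_{r+1}})$ in those slots, and then contract one free index against $\sum_{i_1} d\phi_{i_1} = 0$; the finite-overlap property makes all these manipulations legitimate rearrangements of absolutely convergent (indeed locally finite) sums.

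I expect the main obstacle to be making the termwise differentiation and the cancellation rigorous rather than formal: one must justify differentiating the infinite sum \eqref{alpha} term by term (handled by local finiteness, as in Lemma \ref{aux1}), and one must organise the combinatorics of the $(r+1)$-fold index sum so that the antisymmetrisation-plus-partition-of-unity cancellation is visible. A clean way is to observe that, for fixed $y \in X^C$, replacing each $\nu^r(x_{i_1},\dots,x_{i_{r+1}})$ by the equivalent expression coming from \eqref{Stokes2} turns $d\alpha(y)$ into a sum of boundary integrals over $(r-1)$-faces, each face appearing twice with opposite orientation, and then invoking $\sum_i \phi_i = 1$ (and $\sum_i d\phi_i = 0$) to kill the remaining terms. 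Finally, since $|\partial X| = 0$, the set where we have not verified $dv = 0$ pointwise is Lebesgue-null, which is exactly the statement of the lemma; the upgrade to $dv = 0$ distributionally is deferred to the next lemma.
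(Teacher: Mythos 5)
Your outline — locally finite sum, termwise differentiation, Leibniz with $d^2=0$, and then a cancellation driven by $\sum_i d\phi_i = 0$ on $X^C$ together with a Stokes-type identity — is the right skeleton and matches the paper's. The first half (reducing $d\alpha$ to $\sum_{I} d\phi_{i_1}\wedge\cdots\wedge d\phi_{i_{r+1}}\cdot G(I)(\nu^r(x_{i_1},\dots,x_{i_{r+1}}))$) is correct. However, the step where you make the cancellation concrete is not right, and this is the genuine core of the lemma.

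Your first suggestion — ``fix $i_2,\dots,i_{r+1}$, symmetrise $G(I)\,\nu^r(x_{i_1},\dots,x_{i_{r+1}})$ in those slots, and contract the free index against $\sum_{i_1} d\phi_{i_1}=0$'' — does not go through as stated, because the scalar $G(I)(\nu^r(x_{i_1},\dots,x_{i_{r+1}}))$ itself depends on $i_1$: you cannot pull it out of the sum over $i_1$, so $\sum_{i_1} d\phi_{i_1}=0$ does not kill the term by itself. Your alternative (``replace each $\nu^r(x_{i_1},\dots,x_{i_{r+1}})$ using \eqref{Stokes2} to get a sum of boundary integrals over $(r-1)$-faces, each face appearing twice with opposite orientation'') is also not the right mechanism: $G(I)$ is the average of the $r$-form $u$ over the $r$-simplex $\Sim(x_{i_1},\dots,x_{i_{r+1}})$, and applying \eqref{Stokes2} to that $r$-simplex would require $u$ to be exact, which is not given. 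What is actually needed is Stokes on an $(r+1)$-simplex with one \emph{extra} vertex, so that the hypothesis $du=0$ forces the alternating sum of $r$-face contributions to vanish — this cocycle identity is exactly what the substitution $\sum_i d\phi_i = 0$ should be paired with. The paper does this by fixing $j$ with $y\in Q_j$, writing $d\phi_j(y) = -\sum_{i\neq j} d\phi_i(y)$, substituting this into the sum, and observing that for each fixed $I\in(\N\setminus\{j\})^{r+1}$ the resulting combination
\[
G(I)(\nu^r(x_{i_1},\dots,x_{i_{r+1}})) - \sum_{l=1}^{r+1} G(x_{i_1},\dots,x_{i_{l-1}},x_j,x_{i_{l+1}},\dots)(\nu^r(x_{i_1},\dots,x_j,\dots))
\]
is precisely (up to sign) the alternating sum of face integrals of $u$ over the $(r+1)$-simplex $\Sim(x_j,x_{i_1},\dots,x_{i_{r+1}})$, which vanishes by \eqref{Stokes2} since $du=0$. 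Without the introduction of the auxiliary vertex $x_j$ attached to the cube containing $y$, the cancellation you are after does not materialise; that is the missing idea in your sketch.
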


\begin{proof}
Note that $u \in C_c^{\infty}(\R^N,\Lambda^r)$ and hence $v$ is strongly differentiable in $X \back \partial X$. Furthermore, the sum in \eqref{alpha} is a finite sum in a neighbourhood of $y$ for all $y \in X^C$. As the summands are also $C^{\infty}$, the sum is $C^{\infty}$ in the interior of $X^C$.

By assumption $du=0$, hence it remains to prove that $d \alpha (y) = 0$ for all $y \in X^C$. Note that in a neighbourhood of $y \in X^C$ again only finitely many summands are nonzero. Using that $d^2=0$ and the Leibniz rule, we get
\begin{equation} \label{dalpha}
d \alpha(y) = \sum_{I \in \N^{r+1}}   d\phi_{i_1}(y) \wedge ... \wedge  d \phi_{i_{r+1}}(y) (G(I)(\nu^r(x_{i_1},...,x_{i_{r+1}}))) .
\end{equation}

Observe  that this term does not converge in $L^1$ and hence this identity is only valid pointwise.

Pick some $j \in \N$ such that $y \in Q_j$. As all $\phi_i$ sum up to $1$ in $X^C$, we have \begin{displaymath}
d\phi_j(y) = - \sum_{I \in \N \back \{j\}} d\phi_i(y).
\end{displaymath}
Replace $d \phi_j$ in the sum in \eqref{dalpha} by $- \sum_{I \in \N \back \{j\}} d\phi_i(y)$. Recall that $\nu^r(x_1,...,x_{r+1})=0$ if  $x_l=x_{l'}$ for some $l \neq l'$. Hence, \begin{align*}
d \alpha(y) &= \sum_{I \in \N^{r+1}} d \phi_{i_1}(y) \wedge ... \wedge d\phi_{i_{r+1}}(y) \wedge (G(I)(\nu^r(x_{i_1},...,x_{i_{r+1}})))\\
~ & =\sum_{I \in (\N \back \{j\})^{r+1}}  d \phi_{i_1}(y) \wedge ... \wedge  d \phi_{i_{r+1}}(y) \wedge(G(I)(\nu^r(x_{i_1},...,x_{i_{r+1}})))\\
~&~~ - \sum_{l=1}^{r+1} \sum_{I \in (\N \back \{j\})^{r+1}}  d \phi_{i_1(y)} \wedge ... \wedge  d \phi_{i_{r+1}}(y) \\ & \hspace{1.5cm}\wedge (G(x_{i_1},...x_{i_{l-1}},x_j,x_{i_{l+1}},...)( \nu^r(x_{i_1},...x_{i_{l-1}},x_j,x_{i_{l+1}},...))).
\end{align*}

We apply Stoke's theorem \eqref{Stokes2} to the $r$-form $u$ and the simplex with vertices $x_j, x_{i_1},...,x_{i_{r+1}}$, use that $d u=0$ and conclude that this term is $0$, i.e. \begin{align*}
G(I)&(T^r(x_{i_1},...,x_{i_{r+1}})) - \sum_{l=1}^{r+1} G(x_{i_1},...,x_{i_{l-1}},x_j,x_{i_{l+1}},...)( \nu^r(x_{i_1},...,x_j,x_{i_{l+1}},...)) \\ ~& = - \frac{r-1}{r} \fint_{\Sim(x_j,x_{i_1},...,x_{i_{r+1}})} du(y)( \nu^{r+1}(x_j,x_{i_1},...,x_{i_{r+1}})) \dH^{r}(y) = 0.
\end{align*}

Hence, the pointwise derivative equals $0$ almost everywhere.
\end{proof}
It is important to note that the sum \eqref{alpha} in the definition of $\alpha$ converges in $L^1$, but in general does not converge in $W^{1,1}$, and thus we have no information on the behaviour at the boundary of $X^C$. However, it suffices to show that the distribution $dv$ for $v$ given by \eqref{vdef} is actually an $L^1$ function. If $d v \in L^1$, we can conclude with Lemma \ref{aux2} that $dv= 0$ in the sense of distributions.

\begin{lemma} \label{aux3} 
The distributional exterior derivative of $v$ defined in \eqref{vdef} satisfies $dv \in L^1(\R^N,\Lambda^{r+1})$, i.e. there exists an $L^1$ function $h \in L^1(\R^N,\Lambda^{r+1})$ such that for all $\psi \in C_c^{\infty}(\R^N,\Lambda^{N-r-1})$ \begin{displaymath}
(-1)^{r} \int_{X^C} \alpha \wedge d\psi  + \int_{X} u \wedge d\psi  = \int_{\R^N} h \wedge \psi.
\end{displaymath}
\end{lemma}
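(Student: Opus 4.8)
The plan is to compute the distributional derivative $dv$ by splitting the integral over $\R^N$ into the part over $X^C$ and the part over $X$, and integrating by parts on each piece separately, hoping that the boundary contributions along $\partial X$ either cancel or are controlled by an $L^1$ function. First I would cover $X^C$ by the cubes $Q_j^{\ast}$ and, since $\psi$ has compact support while $X^C$ may be unbounded only in a bounded region (recall $X^C$ is assumed bounded), reduce to a finite collection of cubes after a cutoff or a limiting argument; the key point is that on each fixed cube $Q_j^{\ast} \subset X^C$ the form $\alpha$ is smooth and $d\alpha = 0$ there by Lemma \ref{aux2}, so Stokes' theorem on $Q_j^{\ast}$ converts $\int_{Q_j^{\ast}} \alpha \wedge d\psi$ into a boundary integral over $\partial Q_j^{\ast}$. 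Summing over $j$, the interior faces of neighbouring cubes are traversed twice with opposite orientation; but $\alpha$ is \emph{not} continuous across these faces in general (the partition-of-unity representation changes), so these contributions do not simply cancel — instead I expect them to reorganise, together with the contribution of $u$ on $X$, into an integral over $\partial X$ plus a genuinely $L^1$ bulk term, or to be estimated directly.

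The cleaner route, which I would actually pursue, is to avoid Stokes on the cubes and instead estimate $dv$ as a distribution directly: for $\psi \in C_c^\infty(\R^N, \Lambda^{N-r-1})$ write
\begin{displaymath}
\langle dv, \psi\rangle = (-1)^r \int_{X^C} \alpha \wedge d\psi + \int_X u \wedge d\psi,
\end{displaymath}
and approximate $\indicator_{X^C}$ and $\indicator_X$ by smooth cutoffs $\chi_\delta$ supported in a $\delta$-neighbourhood of $X^C$. On the smooth level one has $d(\chi_\delta \alpha) = d\chi_\delta \wedge \alpha + \chi_\delta\, d\alpha = d\chi_\delta \wedge \alpha$ on $X^C$ (using $d\alpha = 0$ there), and similarly for $u$ on $X$; the terms $d\chi_\delta \wedge \alpha$ and $d\chi_\delta \wedge u$ are supported in the thin shell $\{\dist(\cdot,\partial X) < \delta\}$. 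The crucial estimate is then that $\alpha$ and $u$ agree, in a suitable averaged sense, as one approaches $\partial X$ from either side: using Lemma \ref{maximalf}-type control (i.e. the hypothesis \eqref{Lipschitzprop}) together with $\dist(Q_i, X) \approx l(Q_i)$, one shows that on a cube $Q_i$ near $\partial X$ the value $\alpha(y)$ differs from $u(x_i)$ — and hence from $u(y')$ for the nearest boundary point $y'$, using continuity of $u$ — by at most $C L$, with the difference \emph{uniformly bounded} and supported where it must be. This yields that the shell integrals $\int d\chi_\delta \wedge (\alpha - u)$ stay bounded as $\delta \to 0$ and converge; passing to the limit identifies the distributional $dv$ with an explicit $L^1$ function $h$ supported on $\partial X$ (in fact $h = 0$ since $|\partial X| = 0$, but one does not need that yet — one only needs $h \in L^1$).

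The main obstacle is precisely the matching across $\partial X$: one must show that the jump of $v$ across $\partial X$, paired against test forms, produces no distributional mass of order worse than $L^1$. This is where the geometric estimate \eqref{Lipschitzprop} is essential — it guarantees that the simplex averages $G(I)$ used to build $\alpha$ are Lipschitz-consistent with the boundary values of $u$, so that $\alpha$ extends $u$ across $\partial X$ without creating a singular (measure-valued, codimension-one) part in $dv$. Concretely, I would bound $|\alpha(y) - u(x_i)|$ on $Q_i$ by telescoping the defining sum of $\alpha$: replacing successive $d\phi_{i_k}$ by $-\sum_{l} d\phi_l$ as in the proof of Lemma \ref{aux2} and applying Stokes/\eqref{Stokes2} to $u$ on simplices with one vertex pinned, each difference $G(x_{i_1},\dots) - G(x_{i_1},\dots,x_j,\dots)$ is controlled by $\int_{\Sim}|du| = 0$ plus the hypothesis \eqref{Lipschitzprop} on the remaining simplex, giving $|\alpha - u(x_i)| \le CL$ on $Q_i$; since $|x_i - y| \le C\,l(Q_i) \le C\dist(Q_i,X)$ and $\|u\|_{L^\infty(X)} \le L$, this produces the desired uniform bound on $v$ near $\partial X$ and lets the shell integrals converge to an $L^1$ limit. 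Once $dv \in L^1$ is established, combining with Lemma \ref{aux2} (which gives $dv = 0$ pointwise off the null set $\partial X$) forces $h = 0$ a.e., completing the identification and, together with Lemma \ref{aux1} for the $L^\infty$ bound, the proof of Lemma \ref{extension}.
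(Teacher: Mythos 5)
Your ``cleaner route'' via smooth cutoffs $\chi_\delta$ is a genuinely different approach from the paper's, but as written it has a gap at the shell estimate. The paper instead splits $G(I)=(G(I)-u(\cdot))+u(\cdot)$: the first piece gains a factor $l(Q_{i_1})\,\|\nabla u\|_\infty$, so that sum converges in $W^{1,1}$ and one can integrate by parts; the second piece does \emph{not} converge in $W^{1,1}$ and is handled by an inductive argument (Claims 1--2) that repeatedly uses the identity $\nu^{r-s+1}(x_{i_s},\ldots)=\sum_j \nu^{r-s+1}(\ldots,y,\ldots)$, the telescoping $\sum_j d\phi_j=0$, and the product rule (Lemma \ref{prodrule}) to strip off one $d\phi$ per step. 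Crucially, that strategy never needs any quantitative control on the size of a $\delta$-neighbourhood of $\partial X$.

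The gap in your version: to make $\int d\chi_\delta\wedge(\alpha-u)\wedge\psi$ converge as $\delta\to 0$, the bound $|\alpha-u|\le CL$ that your telescoping produces is not enough. With it, the shell integral is only controlled by
$\frac{CL}{\delta}\,\big|\{\dist(\cdot,\partial X)\in(\delta/2,\delta)\}\big|\,\|\psi\|_\infty$,
and for a general closed $X$ with $|\partial X|=0$ the shell measure is merely $o(1)$, not $O(\delta)$, so this can diverge. What you would actually need is a decay estimate of the form $|\alpha(y)-u(y)|\lesssim \dist(y,\partial X)\,\|\nabla u\|_\infty$ near $\partial X$. That estimate is plausible (it is the pointwise analogue of the $W^{1,1}$-gain the paper exploits for the $(G(I)-u)$-piece), but it is a strictly stronger claim than $|\alpha-u|\le CL$, and your argument ``replacing successive $d\phi_{i_k}$ and applying Stokes'' only yields the $O(L)$ bound. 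Unless you establish the $\dist$-weighted decay, the shell argument does not close.

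A smaller point: your rejection of the first (cube-by-cube Stokes) idea is for the wrong reason. $\alpha$ is $C^\infty$ in the interior of $X^C$ (locally a finite sum of smooth forms), so contributions from interior faces \emph{do} cancel. The real obstruction is that the family of cubes is infinite and accumulates only at $\partial X$, where no well-defined boundary trace of $\alpha$ is available and where $\Haus^{N-1}(\partial X)$ need not be finite. You would run into the same shell-measure problem there.

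Finally, a harmless slip: you claim $h$ is ``supported on $\partial X$ (in fact $h=0$ since $|\partial X|=0$).'' If $h\in L^1$ and $\{h\neq 0\}\subset\partial X$ with $|\partial X|=0$, then $h=0$ a.e.\ automatically; stating that one ``only needs $h\in L^1$'' and will conclude $h=0$ later is circular phrasing, though not a mathematical error once the limit is controlled.
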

\begin{proof}
Consider \begin{displaymath}
\int_{X^C} \alpha(y) \wedge d\psi(y) \dy.
\end{displaymath}
 In view of the definition of $\alpha$, this expression is given by:
 \begin{align*}
\int_{\R^N} & \sum_{I \in \N^{r+1}}  \phi_{i_1}d\phi_{i_2}\wedge ... \wedge \phi_{i_{r+1}}  (G(I)(\nu^r(x_{i_1},...,x_{i_{r+1}})))d\psi~\dy = (\ast).
\end{align*}

We use the splitting $G(I) = (G(I) - u(\cdot)) +u(\cdot)$ and write ($\ast$) as
\begin{equation} \label{splitting}
\begin{aligned}
(\ast) &=  \int_{\R^N} \sum_{I \in \N^{r+1}} \phi_{i_1} d\phi_{i_2}\wedge ...\wedge d\phi_{i_{r+1}} \wedge ((G(I) - u(\cdot))( \nu^r(x_{i_1},...,x_{i_{r+1}})) \wedge d\psi\\ 
~& ~+ \int_{\R^N} \sum_{I \in \N^{r+1}} \phi_{i_1} d \phi_{i_2}\wedge...\wedge d \phi_{i_{r+1}}\wedge ( u(\cdot)(\nu^r(x_{i_1},...,x_{i_{r+1}})))\wedge d\psi~ \\
~& = \text {(I)} + \text {(II)}.
\end{aligned}
\end{equation}
Note that (I) defines a distribution given by an $L^1$ function. Indeed, the sum \begin{displaymath}
\phi_{i_1} d\phi_{i_2}\wedge ...\wedge d\phi_{i_{r+1}} \wedge ((G(I) - u(y))( \nu^r(x_{i_1},...,x_{i_{r+1}}))
\end{displaymath} converges in $W^{1,1}(\R^N,\Lambda^{r+1})$. To see this, one can repeat the proof of Lemma \ref{aux1} and use that there are additional factors in the estimate of the norms. For this, note that if $z \in Q_{i_1}$ \begin{displaymath}
\norm G(I) - u(z) \norm_{\Lambda^r} \leq Cl(Q_i)\norm \nabla u \norm_{L^{\infty}} 
\end{displaymath} 
and \begin{displaymath}
\norm \nabla (G(I) -u(\cdot))(z) \norm_{\Lambda^r} \leq C  \norm\nabla u \norm_{L^{\infty}}.
\end{displaymath}
One gets improved regularity and may integrate by parts to eliminate the derivative of $\psi$.

Term (II) is not so easy to handle. We prove the following claims:

\noindent \textbf{Claim 1:} Let $1 \leq s \leq r$ and $I' =(i_s,...,i_{r+1}) \in \N^{r-s+2}$. There exists $h_s \in L^1(T_N,\Lambda^{r+1})$ such that \begin{equation} \label{indstep} \begin{aligned}
\int_{X^C} & \sum_{I' \in \N^{r-s+2}} \phi_{i_s} d\phi_{i_{s+1}} \wedge...\wedge d \phi_{i_{r+1}} \wedge (u(\cdot)(\nu^{r-s+1}(x_{i_s},...,x_{i_{r+1})}))) \wedge d \psi \\
&= \int_{X^C} h_s \wedge \psi \\
& \quad - \int_{X^C} \sum_{I' \in \N^{r-s+1}} \phi_{i_{s+1}} d \phi_{i_{s+2}} \wedge ... \wedge d\phi_{i_{r+1}} \wedge (u(\cdot)(\nu^{r-s}(x_{i_{s+1}},...,x_{i_{r+1}})) \wedge d \psi.
\end{aligned}
\end{equation}
Here we use the notation that $\nu^0(x_{i_{r+1}}) = 1 \in \Lambda_0 =\R$.

\noindent \textbf{Claim  2:} There is $\tilde{h} \in L^1(T_N,\Lambda^{r+1})$ such that  \begin{equation} \label{ind} \begin{aligned}
\int_{\R^N} & \sum_{I' \in \N^{r+1}} \phi_{i_1} d\phi_{i_{2}} \wedge...\wedge d \phi_{i_{r+1}} \wedge (u(\cdot)(\nu^{r}(x_{i_1},...,x_{i_{r+1})}))) \wedge d \psi \\
&= \int_{X^C} \tilde{h} \wedge \psi + (-1)^r \int_{X^C} u \wedge d\psi.\\
\end{aligned}
\end{equation}

Note that Claim 2 follows from Claim 1 by an inductive argument. The domain of integration in \eqref{ind} can be replaced by $X^C$ as well, as all $\phi_{i_j}$ are supported in $X^C$.

First, let us conclude the proof under the assumption that Claim 1 holds true. Using \eqref{splitting} and Claim 2 we see that there is an $h \in L^1(\R^N,\R^d)$ such that \begin{displaymath}
\int_{X^C} \alpha \wedge d\psi = \int_{\R^N} h \wedge \psi + (-1)^{r}\int_{X^C} u \wedge d\psi.
\end{displaymath}
Recall that $du =0$ in the sense of distributions and therefore 
\begin{displaymath}
-\int_{X^C}  u\wedge d \psi  =  \int_{X} u\wedge d \psi.
\end{displaymath}
We conclude that there exists an $L^1$ function $h \in L^1(\R^N,\Lambda^{r+1})$ such that \begin{displaymath}
\int_{X^C} \alpha \wedge d\psi +   (-1)^{r} \int_{X} u \wedge  d\psi  = \int_{\R^N} h \wedge \psi.
\end{displaymath}
Thus, $dv$ is an $L^1$ function.

It remains to prove Claim 1. Note that 
 \begin{equation} \label{identity1}
\nu^{r-s+1}(x_{i_s},...,x_{i_{r+1}}) = \sum_{j=s}^{r+1} \nu^{r-s+1}(x_{i_s},...,x_{i_{j-1}},y,x_{i_{j+1}},...,x_{i_{r+1}}).
\end{equation}
This can be verified using that the wedge product is alternating and explicitly writing the right-hand side of \eqref{identity1}.

Using this identity, we may split the right-hand side of \eqref{indstep} (denoted by (III)), i.e.
\begin{align*}
\text{(III)} &= \sum_{j=s+1}^{r+1} \int_{\R^N} \sum_{I \in \N^{r-s+2}}  \phi_{i_s} d\phi_{i_{s+1}}\wedge ... \wedge d\phi_{i_{r+1}}\\ & \hspace{2cm} \wedge (u(\cdot)( \nu^{r-s+1}(x_{i_s},...,x_{i_{j-1}},y,x_{i_{j+1}},...,x_{i_{r+1}}))) \wedge d\psi \\~& +\int_{\R^N} \sum_{I \in \N^{r-s+2}}   \phi_{i_s} d\phi_{i_{s+1}}\wedge... \wedge d\phi_{i_{r+1}}\wedge( u(\cdot) (\nu^{r-s+1}(y,x_{i_{s+1}},...,x_{i_{r+1}})))\wedge d\psi \\ ~&= \text {(IIIa)} + \text{(IIIb)}.
\end{align*}
Arguing as in Lemma \ref{aux1}, we see that the sum \begin{displaymath}
\sum_{I \in \N^{r-s+2}}  \phi_{i_s} d\phi_{i_{s+1}}\wedge ... \wedge d\phi_{i_{r+1}} \wedge (u(\cdot)( \nu^{r-s+1}(x_{i_s},...,x_{i_{j-1}},y,x_{i_{j+1}},...,x_{i_{r+1}}))) 
\end{displaymath}
is in fact convergent in $L^1$. Moreover, the index $i_j$ only appears once in this sum. Recall that for $y \in X^C $ \begin{displaymath}
\sum_{i_j \in \N} d\phi_{i_j}(y) =0.
\end{displaymath}
Thus, \begin{displaymath}
\text{(IIIa)} = 0.
\end{displaymath}
For (IIIb) note that $\sum_{i_1 \in \N} \phi_{i_s} = 1_{X^C}$ and, by the same argument as for (IIIa), we can write \begin{align*}
\text{(IIIb)} &= \int_{X^C} \sum_{I \in \N^{r-s+1}}  d\phi_{i_{s+1}}\wedge...\wedge d\phi_{i_{r+1}}\wedge( u(\cdot)(\nu^{r-s+1}(y,x_{i_{s+1}},...,x_{i_{r+1}}))) \wedge d\psi.
\end{align*}
We can now integrate by parts to eliminate the exterior derivative in front of $\phi_{i_{s+1}}$.  Applying Lemma \ref{prodrule}, using $d^2=0$, the Leibniz rule and the fact that $\phi_{i_j} \in C_c^{\infty}(\R^N,\R)$  \begin{align*}
&(-1)^{r-s+1} \text{(IIIb)}\\ &=  \int_{X^C}  \sum_{I \in \N^{r-s+1}}    \phi_{i_{s+1}}d\phi_{i_{s+2}}\wedge... \wedge d\phi_{i_{r+1}} \wedge d( u(\cdot)( \nu^{r-s+1}(y,x_{i_{s+1}},...,x_{i_{r+1}})))\wedge d\psi  \\ 
~ & = \int_{X^C}  \sum_{I \in \N^{r-s+1}}    \phi_{i_{s+1}}d\phi_{i_{s+2}}\wedge... \wedge d\phi_{i_{r+1}} \\ & \hspace{4cm} \wedge D^{r-s+1,r}(\nabla u(\cdot),( \nu^{r-s+1}(y,x_{i_{s+1}},...,x_{i_{r+1}})))\wedge d\psi \\
~& ~+~(-1)^{(r-s)} \int_{X^C } \sum_{I \in \N^{r-s+1}}    \phi_{i_{s+1}}d\phi_{i_{s+2}}\wedge... \wedge d\phi_{i_{r+1}} \\ &\hspace{4cm} \wedge u(\cdot)( \nu^{r-s}(x_{i_{s+1}},...,x_{i_{r+1}})))\wedge d\psi  \\
 ~&= \text{(IIIc)} +\text{(IIId)}.
\end{align*}
Arguing similarly to Lemma \ref{aux1} and as for term (I), we can show that 
\begin{displaymath}
 \sum_{I \in \N^{r}}    \phi_{i_{s+1}}d\phi_{i_{s+2}}\wedge... \wedge \phi_{i_{r+1}} \wedge D^{r-s+1,r}(\nabla u(\cdot),( \nu^{r-s+1}(y,x_{i_{s+1}},...,x_{i_{r+1}}))) \in W^{1,1}(\R^N,\Lambda^r),
\end{displaymath}
and that this sum is convergent in $W^{1,1}$.
Hence, we have shown that there exists $h_s \in L^1(\R^N,\Lambda^{r+1})$ such that \begin{equation} \label{induction} \begin{aligned} 
\text{(III)}=& \int_{\R^N} \sum_{I \in \N^{r-s+2}} \phi_{i_s} d \phi_{i_{s+1}}\wedge...\wedge d \phi_{i_{r+1}}\wedge ( u(\cdot)(\nu^{r-s+1}(x_{i_1},...,x_{i_{r+1}})))\wedge d\psi~    \\
&= \int_{\R^N} h_s \wedge \psi  
\\ &\hspace{0.5cm} -\int_{\R^N} \sum_{I \in \N^{r-s+1}} \phi_{i_{s+1}} d\phi_{i_{s+2}} \wedge ... \wedge d\phi_{i_{r+1}} (u(\cdot)(\nu^{r-s}(x_{i_{s+1}},...,x_{i_{r+1}})))\wedge d \psi
\end{aligned}
\end{equation}
Hence, Claim 1 holds, completing the proof of Lemma \ref{aux3}
\end{proof}
This proves Lemma \ref{extension}. The property that \begin{displaymath}
 dv = 0 \quad \text{in the sense of distributions}
\end{displaymath} 
follows from Lemma \ref{aux2} and Lemma \ref{aux3}. By definition, $v=u$ on $X$. Finally, we can bound the $L^{\infty}$-norm of $v$ by $CL$, as in the definition of $\alpha$ \begin{displaymath}
 \sum_{I \in \N^{r+1}} \phi_{i_1} d\phi_{i_2}\wedge ... \wedge d\phi_{i_{r+1}} \wedge (G(I)(\nu^r(x_{i_1},...,x_{i_{r+1}})))
\end{displaymath}
every summand can be bounded by $CL$ due to (\ref{Lipschitzprop}) and the estimate $\V d \phi_j \V \leq C \dist(Q_j,X)^{-1}$. Again, we get the $L^{\infty}$ bound, as only finitely many summands on are nonzero for every $y \in X^C$.

With slight modifications one is able to prove the following variants.

\begin{coro} \label{unbounded}
Let $u \in C^{\infty}(\R^N,\Lambda^r)$ with $du=0$, let $L>0$, and let $X \subset \R^N$ be a nonempty closed set such that $\norm u \norm_{L^{\infty}(X)} \leq L$ and for all $x_1,...,x_{r+1} \in X$ we have  \begin{displaymath}
\left \V \fint_{\Sim(x_1,...,x_{r+1})}  u(y)(\nu^r(x_1,...,x_{r+1})) \dy \right \V \leq L \max \V x_i -x_j \V ^r.
\end{displaymath}
Suppose further that $\V \partial X \V =0$.

There exists a constant $C= C(N,r)$ such that for all  $u \in C^{\infty}(\R^N,\Lambda^r)$ and $X$ meeting these requirements there exists $v \in L^1_{\loc}(\R^N,\Lambda^r)$ with \begin{enumerate} [i)]
\item $dv =0$ in the sense of distributions;
\item $v(y)=u(y)$ for all $y \in X$;
\item $\norm v \norm_{L^{\infty}} \leq CL$.
\end{enumerate}
\end{coro}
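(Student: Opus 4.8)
The plan is to follow the proof of Lemma \ref{extension} essentially verbatim, weakening $L^1$ and $W^{1,1}$ to $L^1_{\loc}$ and $W^{1,1}_{\loc}$ at every place where the boundedness of $X^C$ or the compact support of $u$ was used, and then to check that all the estimates localise. If $X=\R^N$ there is nothing to prove, so we may assume $X^C\neq\emptyset$. The Whitney decomposition of $X^C$ into dyadic cubes, the enlarged cubes $Q_i=(1+\varepsilon)Q_i^\ast$, the subordinate partition of unity $\{\phi_i\}$ with $\norm\nabla\phi_i\norm\le C\dist(Q_i,X)^{-1}$, and the choice of points $x_i\in X$ with $\dist(Q_i,x_i)=\dist(Q_i,X)$, are all available for an arbitrary closed set with nonempty complement, whether or not $X^C$ is bounded. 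We then define $\alpha$ by \eqref{alpha} and $v$ by \eqref{vdef} with exactly the same formulas. Since the family $\{Q_i\}$ is locally finite, the sum in \eqref{alpha} is locally finite, hence converges pointwise and defines $\alpha\in L^1_{\loc}(\R^N,\Lambda^r)\cap L^{\infty}(\R^N,\Lambda^r)$; we no longer obtain $\alpha\in L^1$, but this is not needed below.

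Properties ii) and iii) are obtained exactly as in Lemma \ref{extension}: $v=u$ on $X$ by construction, and for every $y\in X^C$ the at most $C(N)^{r+1}$ nonzero summands of \eqref{alpha} at $y$ are each bounded by $C(N,r)L$ — this uses only the hypothesis \eqref{Lipschitzprop}, the bound $\norm d\phi_j\norm\le C\dist(Q_j,X)^{-1}$, and the fact that cubes meeting at a point have comparable sidelengths — so $\norm v\norm_{L^\infty}\le C(N,r)L$ with the same constant as before. For property i), the analogue of Lemma \ref{aux2} holds with the same proof: $v$ is smooth on the interior of $X$ (because $u\in C^\infty$) and on the interior of $X^C$ (a locally finite sum of smooth forms), and the pointwise identity $d\alpha(y)=0$ for $y\in X^C$ is derived exactly as in Lemma \ref{aux2}, since it rests only on $du=0$ and Stokes' theorem \eqref{Stokes2} on a single simplex with vertices in $X$, none of which sees the global behaviour of $u$ or $X$. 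Hence $dv=0$ strongly on $\R^N\setminus\partial X$, and because $\V\partial X\V=0$, $dv=0$ almost everywhere.

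It remains to establish the analogue of Lemma \ref{aux3}, namely $dv\in L^1_{\loc}(\R^N,\Lambda^{r+1})$; together with the previous paragraph this yields $dv=0$ in the sense of distributions. Fix $\psi\in C_c^\infty(\R^N,\Lambda^{N-r-1})$ and set $K:=\spt\psi$. As $\{Q_i\}$ is locally finite, only finitely many cubes meet $K$, so $\int_{X^C}\alpha\wedge d\psi$ is a finite sum of integrals over $K$; likewise every sum in the proof of Lemma \ref{aux3} — the splitting \eqref{splitting} into terms (I) and (II), the identity \eqref{identity1}, the vanishing of (IIIa) via $\sum_{i_j}d\phi_{i_j}=0$ on $X^C$, and the integrations by parts producing (IIIc), (IIId), together with the two claims proved there — reduces over $K$ to a manipulation of finitely many smooth compactly supported forms (and the smooth form $u$), and is thus fully justified, with $L^1_{\loc}$ and $W^{1,1}_{\loc}$ convergence in place of the $L^1$, $W^{1,1}$ convergence of the original argument. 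The only place where a global norm of $u$ entered, the bounds $\norm G(I)-u(z)\norm\le Cl(Q_i)\norm\nabla u\norm_{L^\infty}$ and $\norm\nabla(G(I)-u(\cdot))(z)\norm\le C\norm\nabla u\norm_{L^\infty}$ for $z\in Q_i$, is to be read locally: the simplex defining $G(I)$ and the point $z$ lie in a ball of radius $\le Cl(Q_i)$, so these quantities are controlled by $\sup|\nabla u|$ over a bounded neighbourhood of $K$, which is finite since $u\in C^\infty$. Running the induction in the two claims of Lemma \ref{aux3} then produces $h\in L^1_{\loc}(\R^N,\Lambda^{r+1})$ with $\int_{X^C}\alpha\wedge d\psi+(-1)^r\int_X u\wedge d\psi=\int_{\R^N}h\wedge\psi$, i.e.\ $dv\in L^1_{\loc}$. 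The constant $C=C(N,r)$ is the one furnished by Lemma \ref{extension} and depends on neither $u$ nor $X$. The main point — and the only step requiring genuine care — is exactly this bookkeeping: verifying that each appeal in Lemmas \ref{aux1}–\ref{aux3} to the finiteness of $\V X^C\V$, $\norm u\norm_{L^\infty(\R^N)}$ or $\norm\nabla u\norm_{L^\infty(\R^N)}$ can be replaced by a statement localised to the compact support of the test function together with the local finiteness of the Whitney family.
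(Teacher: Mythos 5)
Your overall plan — run Lemmas \ref{aux1}--\ref{aux3} verbatim, replacing global $L^1$, $W^{1,1}$ bounds by their local counterparts — is exactly the route the paper takes (the paper disposes of this corollary with a one-sentence remark to that effect), and your observation that $\alpha\in L^1_{\loc}\cap L^{\infty}$ and that the pointwise argument of Lemma \ref{aux2} is purely local are both correct and well explained.

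However, the justification you give for the local analogue of Lemma \ref{aux3} contains a real error. You write that, $\{Q_i\}$ being locally finite, ``only finitely many cubes meet $K=\spt\psi$'', so that $\int_{X^C}\alpha\wedge d\psi$ and all the sums in the proof of Lemma \ref{aux3} reduce to finite sums of smooth terms. This is false whenever $K\cap\partial X\neq\emptyset$: the Whitney family is locally finite as a family of subsets of the open set $X^C$, not of $\R^N$, and near $\partial X$ the cube sidelengths go to zero, so a compact $K$ touching $\partial X$ meets infinitely many cubes (take $X=\{0\}\subset\R$, $K=[-1,1]$). Moreover, if only finitely many cubes contributed, $\alpha$ would be a locally finite sum of smooth forms, $d\alpha=0$ would follow directly from Lemma \ref{aux2}, and the entire integration-by-parts machinery of Lemma \ref{aux3} would be superfluous. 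The fact that Lemma \ref{aux3} is needed at all is precisely because the sum defining $\alpha$ has infinitely many terms near $\partial X$ and converges in $L^1$ but not in $W^{1,1}$. Your remark that one should use ``$L^1_{\loc}$ and $W^{1,1}_{\loc}$ convergence'' already hints that you sense this, but it contradicts the ``finitely many cubes'' assertion.

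The correct localisation argument is the following. For a bounded set $B\subset\R^N$ and any cube $Q_i$ meeting $B$, the Whitney estimate $l(Q_i)\le 8\dist(Q_i,X)$ together with $\dist(Q_i,X)\le\dist(B,X)+\diam B$ gives a uniform bound $l(Q_i)\le M_B:=8(\dist(B,X)+\diam B)$ (finite because $X\neq\emptyset$). Hence all cubes meeting $B$, together with all the projection points $x_i$ and simplices $\Sim(x_{i_1},\dots,x_{i_{r+1}})$ that occur, lie in a fixed bounded neighbourhood $B'$ of $B$ (of size comparable to $M_B$). One may then replace $|X^C|$, $\|u\|_{L^\infty(\R^N)}$ and $\|\nabla u\|_{L^\infty(\R^N)}$ in Lemmas \ref{aux1} and \ref{aux3} by $|B'|$, $\sup_{B'}|u|$ and $\sup_{B'}|\nabla u|$, all of which are finite, and the $L^1$-convergence, the $W^{1,1}$-convergence of terms (I) and (IIIc), the vanishing of (IIIa) via $\sum_{i_j}d\phi_{i_j}=0$, and the induction of Claims 1 and 2 all go through over $B'$ exactly as written, yielding $dv\in L^1_{\loc}$. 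The sums are still infinite near $\partial X$; what one gains from $B$ being bounded is a uniform scale $M_B$ and a bounded reference domain $B'$, not finiteness.
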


This statement is proven in the same way as Lemma \ref{extension}, but all the statements are only true locally (e.g. the $L^1$ bounds on $\alpha$ are replaced by bounds in $L^1_{\loc}(X^C,\Lambda^r)$).

If we choose $u$ and $X$ to be $\Z^N$ periodic we get a suitable statement for the torus.

\begin{coro} \label{torus}
Let $u \in C^{\infty}(T_N,\Lambda^r)$ with $du =0$, let $L>0$, and let $X \subset \R^N$ be a nonempty, closed, $\Z^N$-periodic set (which can be viewed as a subset of $T_N$) such that $\norm u \norm_{L^{\infty}(X)} \leq L$ and for all $x_1,...,x_{r+1} \in X$ we have  \begin{displaymath}
\left \V \fint_{\Sim(x_1,...,x_{r+1})}  \tilde{u}(y)(\nu^r(x_1,...,x_{r+1}))  \dy\right \V \leq L \max \V x_i -x_j \V ^r,
\end{displaymath}
where $\tilde{u}\in C^{\infty}(\R^N,\Lambda^r)$ is the $\Z^N$-periodic representative of $u$.
Suppose further that $\V \partial X \V =0$.

There exists a constant $C= C(N,r)$ such that for all  $u \in C^{\infty}(T_N,\Lambda^r)$ and $X$ meeting these requirements there exists $v \in L^1(T_N,\Lambda^r)$ with \begin{enumerate} [i)]
\item $dv =0$ in the sense of distributions;
\item $v(y)=u(y)$ for all $y \in X \subset T_N$;
\item $\norm v \norm_{L^{\infty}} \leq CL$.
\end{enumerate}
\end{coro}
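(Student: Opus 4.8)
The plan is to repeat the construction in the proof of Lemma~\ref{extension} verbatim, taking care that every object involved can be chosen $\Z^N$-periodically, and then to observe that the resulting extension $v$ is itself periodic with the required bounds. First I would fix the $\Z^N$-periodic representative $\tilde u\in C^\infty(\R^N,\Lambda^r)$ and the closed periodic set $X\subset\R^N$. Since $X$ is nonempty and periodic, $\dist(x,X)\le \sqrt N$ for every $x$, so $X^C$ is ``not too large'' per period even though it is unbounded, and in particular the Whitney cubes have uniformly bounded sidelengths. The dyadic Whitney decomposition $\{Q_i^\ast\}$ of $X^C$ can be chosen $\Z^N$-periodic (the dyadic grid and $X^C$ are both periodic), so that $\{Q_i^\ast\}$, the fattened cubes $Q_i=(1+\varepsilon)Q_i^\ast$, the partition of unity $\{\phi_i\}$, and the base points $x_i\in X$ with $\dist(Q_i,x_i)=\dist(Q_i,X)$ are all equivariant under the $\Z^N$-action: translation by $k\in\Z^N$ induces a bijection $i\mapsto\tau_k(i)$ of $\N$ with $Q_{\tau_k(i)}=Q_i+k$, $\phi_{\tau_k(i)}(\cdot+k)=\phi_i$, and $x_{\tau_k(i)}=x_i+k$. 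Then I define $\alpha$ and $v$ by the same formulas \eqref{alpha} and \eqref{vdef}, with $\tilde u$ in place of $u$.

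Second, I would check that $v$ is $\Z^N$-periodic. Since $\nu^r(x_{i_1},\dots,x_{i_{r+1}})$ depends only on the differences $x_{i_{j+1}}-x_{i_j}$, it is invariant under the simultaneous translation $x_{i_j}\mapsto x_{i_j}+k$; likewise $G(I)=\fint_{\Sim(x_{i_1},\dots,x_{i_{r+1}})}\tilde u$ is invariant, because $\tilde u$ is periodic and $\Sim(x_{i_1}+k,\dots,x_{i_{r+1}}+k)=\Sim(x_{i_1},\dots,x_{i_{r+1}})+k$. Reindexing the sum in \eqref{alpha} by the bijection $\tau_k$ and using these invariances gives $\alpha(\cdot+k)=\alpha(\cdot)$, and since $\tilde u$ and $X$ are periodic, $v(\cdot+k)=v(\cdot)$. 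Hence $v$ descends to a well-defined element of $L^1_{\loc}(\R^N,\Lambda^r)$ that we may regard as a function on $T_N$, and $v=u$ on $X\subset T_N$ by construction.

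Third come the estimates, which are exactly those of Lemmas~\ref{aux1}, \ref{aux2}, \ref{aux3} in their local form (as in Corollary~\ref{unbounded}), now upgraded to global bounds on $T_N$ by periodicity. The bounded-overlap property of the $Q_i$ still holds, so each summand of \eqref{alpha} is bounded in $L^\infty$ by $CL$ via \eqref{Lipschitzprop} and $\|d\phi_j\|\le C\dist(Q_j,X)^{-1}$, giving $\|v\|_{L^\infty(T_N)}\le CL$; and since $\sum_{i:\,Q_i\cap[0,1]^N\ne\emptyset}|Q_i|\le C(N)$ by periodicity, the argument of Lemma~\ref{aux1} yields $\alpha\in L^1(T_N)$, i.e.\ $v\in L^1(T_N)$. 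For $dv=0$ in the sense of distributions: Lemma~\ref{aux2} gives that the strong derivative $dv$ vanishes on $\R^N\setminus\partial X$, and $|\partial X|=0$ (interpreted per period) makes this a.e.; the argument of Lemma~\ref{aux3} — the splitting $G(I)=(G(I)-\tilde u)+\tilde u$, Claim~1 and its induction to Claim~2, and the $W^{1,1}$-convergence of the ``good'' pieces — goes through word for word, now with $\|\nabla\tilde u\|_{L^\infty(T_N)}<\infty$ available globally, so that $dv\in L^1(T_N,\Lambda^{r+1})$; combined with Lemma~\ref{aux2} this forces $dv=0$ in $\mathcal D'(T_N)$.

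The only genuinely new point — and hence the step to be careful about — is the equivariance of the Whitney construction: one must verify that the dyadic Whitney cubes, the partition of unity, and the nearest-point selection $x_i\in X$ can all be chosen $\Z^N$-periodically, and then that $\alpha$ inherits periodicity through the translation-invariance of $\nu^r$ and $G(I)$. Once this bookkeeping is settled, the analytic content is literally that of Lemma~\ref{extension} / Corollary~\ref{unbounded}, and restricting the (now periodic) estimates to one fundamental domain converts the $L^1_{\loc}$ bounds into $L^1(T_N)$ bounds, which completes the proof.
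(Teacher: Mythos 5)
Your proposal is correct and follows exactly the same route as the paper: run the Whitney construction of Lemma~\ref{extension} on the $\Z^N$-periodic representative $\tilde u$ and the periodic set $X$, choose the dyadic Whitney cubes, the partition of unity, and the nearest points $x_i\in X$ to be $\Z^N$-equivariant, and observe that $\nu^r$ and $G(I)$ are translation-invariant so the resulting $\alpha$ (hence $v$) is periodic; the estimates of Lemmas~\ref{aux1}--\ref{aux3} then hold per period and hence on $T_N$. The paper states this in one remark without spelling out the equivariance bookkeeping, which you do carefully (including the observation that periodicity of $X$ forces a uniform bound on the Whitney cube sidelengths, keeping the dyadic collection $\Z^N$-invariant), but the mathematical content is identical.
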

As mentioned before, we can choose the cubes $Q_j$ to be rescaled dyadic cubes. As the set $X$ is periodic, the set of cubes (and hence also the partition of unity) and their projection points may also be chosen to be $\Z^N$-periodic. By definition then also the the extension will be $\Z^N$-periodic.


\section{$L^{\infty}$-truncation} \label{Secmain}
Now we prove the main result of this paper on the $L^{\infty}$-truncation of closed forms.
\begin{thm}[$L^{\infty}$-truncation of differential forms] \label{main}
There exist constants $C_1,C_2 >0$ such that for all  $u \in L^1(T_N,\Lambda^r)$ with $du=0$ and all $L>0$ there exists $v \in L^{\infty}(T_N,\Lambda^r)$ with $dv=0$ and \begin{enumerate} [i)]
\item $\norm v \norm_{L^{\infty}(T_N,\Lambda^r)} \leq C_1 L$;
\item $\V \{y \in T_N \colon v(y) \neq u(y) \V\} \leq \frac{C_2}{L} \int_{\{y \in T_N\colon  \V u(y) \V >L\}} \V u(y) \V \dy$;
\item $\norm v - u \norm_{L^1(T_N,\Lambda^r)} \leq C_2 \int_{\{y \in T_N \colon  \V u(y) \V >L\}} \V u(y) \V \dy$.
\end{enumerate}
\end{thm}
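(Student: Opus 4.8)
The plan is to reduce Theorem~\ref{main} to the Whitney-type extension result of Corollary~\ref{torus} via a maximal function argument, following the classical Zhang scheme but with the simplex-based Lipschitz estimate of Lemma~\ref{maximalf} replacing the pointwise Acerbi--Fusco estimate. First I would handle the case $u \in C^\infty(T_N,\Lambda^r)$ with $du=0$ and fix the parameter $\lambda = \kappa L$ for a dimensional constant $\kappa$ to be chosen; set $X := \{ y \in T_N : M\tilde u(y) \le \lambda \}$, where $\tilde u$ is the $\Z^N$-periodic representative. Then $X$ is closed, $\Z^N$-periodic, and $|\partial X|=0$ after possibly adjusting $\lambda$ within a set of full measure (the level sets $\{M\tilde u = t\}$ have measure zero for all but countably many $t$, so we may pick $\lambda \in [\kappa L, 2\kappa L]$ with $|\partial X|=0$). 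On $X$ we have $\norm u\norm_{L^\infty(X)} \le \lambda$ trivially, since $|u(y)| \le M\tilde u(y)$ at Lebesgue points, and the simplex estimate $\bigl| \fint_{\Sim(x_1,\dots,x_{r+1})} u(\nu^r(x_1,\dots,x_{r+1})) \bigr| \le C\lambda \max_{i,j}|x_i-x_j|^r$ for all $x_1,\dots,x_{r+1}\in X$ is exactly Lemma~\ref{maximalf}. Hence Corollary~\ref{torus} applies (with $L$ there equal to $C\lambda$) and produces $v \in L^\infty(T_N,\Lambda^r)$ with $dv=0$, $v=u$ on $X$, and $\norm v\norm_{L^\infty} \le C\lambda \le C_1 L$. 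This gives i) and $\{v\neq u\} \subset T_N\setminus X = \{M\tilde u > \lambda\}$.

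Next I would extract the quantitative estimates ii) and iii). Split $u = u\indicator_{\{|u|\le L\}} + u\indicator_{\{|u|>L\}} =: u_1 + u_2$; by sublinearity of $M$ (Proposition~\ref{maximalfct}), $M\tilde u \le M\tilde u_1 + M\tilde u_2 \le L + M\tilde u_2$, so for $\lambda \ge 2L$ we get $\{M\tilde u > \lambda\} \subset \{M\tilde u_2 > \lambda/2\}$, and the weak-$(1,1)$ bound on the torus gives $|\{M\tilde u > \lambda\}\cap[0,1]^N| \le C\lambda^{-1}\norm u_2\norm_{L^1} = C\lambda^{-1}\int_{\{|u|>L\}}|u|\dy$. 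Since $\lambda \sim L$, this is ii). For iii), write $\norm v - u\norm_{L^1} = \int_{\{v\neq u\}}|v-u| \le \int_{\{v\neq u\}}|v| + \int_{\{v\neq u\}}|u|$. The first term is bounded by $\norm v\norm_{L^\infty}|\{v\neq u\}| \le C_1 L \cdot C\lambda^{-1}\int_{\{|u|>L\}}|u| \le C\int_{\{|u|>L\}}|u|$. The second term, $\int_{\{v\neq u\}}|u| \le \int_{\{M\tilde u > \lambda\}}|u|$, needs a truncation-of-the-integrand argument: split again as $\int_{\{M\tilde u>\lambda\}}|u_1| + \int_{\{M\tilde u>\lambda\}}|u_2|$; the second is $\le \norm u_2\norm_{L^1} = \int_{\{|u|>L\}}|u|$, and the first is $\le L|\{M\tilde u>\lambda\}| \le L\cdot C\lambda^{-1}\int_{\{|u|>L\}}|u| \le C\int_{\{|u|>L\}}|u|$. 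Summing gives iii).

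Finally I would remove the smoothness assumption by approximation. Given $u \in L^1(T_N,\Lambda^r)$ with $du=0$, mollify: $u_\delta := u * \rho_\delta \in C^\infty(T_N,\Lambda^r)$ still satisfies $du_\delta = 0$, and $u_\delta \to u$ in $L^1$. Apply the smooth case to each $u_\delta$ with the \emph{same} threshold $L$ to get $v_\delta$ with $\norm v_\delta\norm_{L^\infty} \le C_1 L$, $\norm v_\delta - u_\delta\norm_{L^1} \le C_2\int_{\{|u_\delta|>L\}}|u_\delta|$. One must check that $\int_{\{|u_\delta|>L\}}|u_\delta| \to \int_{\{|u|>L\}}|u|$ (or at least $\limsup \le \int_{\{|u|>L'\}}|u|$ for $L'$ slightly below $L$, which requires a little care because the level set is not continuous in $\delta$; this can be circumvented by choosing $L$ within a range where $|\{|u|=L\}|=0$ and using that $|u_\delta| \to |u|$ in $L^1$ hence a.e. along a subsequence, together with $\{|u_\delta|>L\} \to \{|u|>L\}$ in measure, then dominated/Vitali convergence). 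Since $\norm v_\delta\norm_{L^\infty} \le C_1 L$ uniformly, a subsequence converges weak-$*$ in $L^\infty$ to some $v$ with $dv=0$ (the constraint passes to distributional limits) and $\norm v\norm_{L^\infty}\le C_1 L$; combined with $v_\delta - u_\delta \to v - u$ in the appropriate sense and the uniform $L^1$ bound, iii) follows, and ii) follows from iii) together with the $L^\infty$ bound as above (or directly via lower semicontinuity of the measure of $\{v\neq u\}$ under $L^1$ convergence after passing to a subsequence). The main obstacle I anticipate is precisely this last step: ensuring the level-set quantity $\int_{\{|u_\delta|>L\}}|u_\delta|\dy$ behaves well under mollification and that the weak-$*$ limit $v$ genuinely agrees with $u$ off a small set rather than merely being close in $L^1$ — getting the clean pointwise statement $\{v\neq u\}$ has small measure (assertion ii)) from a weak-$*$ limit requires passing to a subsequence with $v_\delta \to v$ a.e., which should be arrangeable since the $v_\delta$ are uniformly bounded and one can upgrade weak-$*$ to a.e. convergence on the complement of the bad sets using that $v_\delta = u_\delta$ there.
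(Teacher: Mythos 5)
Your proposal follows the same route as the paper: on the smooth level truncate using $X_\lambda=\{Mu\le\lambda\}$ with $\lambda\sim L$ chosen so $|\partial X_\lambda|=0$, verify the simplex estimate via Lemma~\ref{maximalf}, apply Corollary~\ref{torus}, and get ii)--iii) from a weak-$(1,1)$ maximal-function estimate for a truncated integrand. Your decomposition $u=u\indicator_{\{|u|\le L\}}+u\indicator_{\{|u|>L\}}$ is equivalent to the paper's $h(z)=(|z|-L)_+$ device, and arguably a bit more direct. Then both pass to general $u\in L^1$ by mollification and a weak-$*$ limit.

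Where you should be more careful is in the approximation step, which you rightly flag as the delicate point. First, there is no need for convergence of $\int_{\{|u_\delta|>L\}}|u_\delta|$; a one-sided bound suffices, and it is elementary: splitting on $\{|u_n-u|\ge|u|\}$ gives
\begin{equation*}
\int_{\{|u_n|\ge 2\lambda\}}|u_n|\,\textup{d}y\le 2\int_{\{|u|\ge\lambda\}}|u|\,\textup{d}y+2\|u_n-u\|_{L^1},
\end{equation*}
which removes the need for the Vitali-type argument you sketch (and for choosing $L$ to be a ``good'' level). Second, the claim that ii) follows from iii) together with the $L^\infty$ bound is not correct: an $L^1$ estimate on $v-u$ yields no control of $|\{v\neq u\}|$ without a pointwise lower bound on $|v-u|$ on that set, and ``lower semicontinuity of $|\{v\neq u\}|$'' is not available under the mere weak-$*$ convergence $v_\delta\weakstar v$. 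The correct mechanism is the one you gesture at in your last sentence and the one the paper uses: the measure estimate $|\{Mu>\lambda\}|\le (C/L)\int_{\{|u|>L\}}|u|$ holds for every $u\in L^1$ (it uses only Proposition~\ref{maximalfct}), and the limit $v$ satisfies $\{v\neq u\}\subset\{Mu>\lambda\}$ because $|\{Mu\le\lambda\}\cap\{Mu_n\ge 2\lambda\}|\to0$ forces $v_n=u_n\to u$ in measure on $\{Mu\le\lambda\}$, so the weak-$*$ limit $v$ agrees with $u$ there. To make this work simultaneously for all $n$, the paper also fixes a single $\lambda\in(4L,6L)$ with $|\partial\{Mu_n\ge2\lambda\}|=0$ for every $n$ (possible since only countably many levels fail), rather than choosing $\lambda$ per $n$. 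With these adjustments your argument is sound and matches the paper's.
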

Given the Whitney-type extension obtained in Lemma \ref{torus} and  Lemma \ref{extension} combined with Lemma \ref{maximalf}, the proof now roughly follows \textsc{Zhang}'s proof for Lipschitz truncation in \cite{Zhang}. First, we prove the statement in the case that $v$ is smooth directly using our extension theorem for the set $X= \{M u \leq L\}$. After calculations similar to \cite{Zhang} we are able to show that this extension satisfies the properties of Theorem \ref{main}. Afterwards, we prove the statement for $u \in L^1(T_N,\Lambda^r)$ by a standard density argument.

\begin{proof}
First, suppose that $u \in C^{\infty}(T_N,\Lambda^r)$. For $\lambda>0$ define the set \begin{displaymath} X_{\lambda} = \{y \in T_N \colon Mu(y) \leq \lambda\}.
\end{displaymath}
 Choose $2L \leq \lambda \leq 3L$ such that $\V \partial X_{\lambda} \V =0$. Then, by Lemma \ref{maximalf} and the extension Lemma \ref{torus}, there exists a $v \in L^1(T_N,\Lambda^r)$ with \begin{enumerate}
\item $\{ y \in T_N \colon v(y) \neq u(y) \} \subset X_{\lambda}^C$.
\item $\norm v \norm_{L^{\infty}} \leq CL$.
\item $dv =0$ in the sense of distributions.
\end{enumerate}
We need to show that \begin{equation} \label{TS1}\norm v - u \norm_{L^1(T_N,\Lambda^r)} \leq C_2 \int_{\{y\colon  \V u(y) \V >L\}} \V u(y) \V \dy \end{equation} 
and that \begin{equation} \label{TS2}
\V \{y \in T_N \colon v(y) \neq u(y) \}\V \leq \frac{C_2}{L} \int_{\{y\colon  \V u(y) \V >L\}} \V u(y) \V \dy. \end{equation}

Indeed, \eqref{TS1} follows from \eqref{TS2}, as 
\begin{align*}
\int_{T_N} \V v(y) -u(y) \V \dy &= \int_{X_{\lambda}^C} \V v(y) -u(y)\V \dy \\
~& \leq \int_{\{Mu \geq \lambda\}} \V u(y) \V + \int_{\{Mu \geq \lambda\}} \V v(y) \V \dy \\
~& \leq  \int_{\{\V u \V \geq \lambda\}} \V u(y) \V \dy + 2CL \V \{ Mu \geq \lambda\} \V.
\end{align*}
Thus, it suffices to prove \eqref{TS2}.

To this end, define the function $h \colon \Lambda^r \to \R$ by \begin{displaymath}
h(z) = \left \{ \begin{array}{ll} 0 & \text{if } \V z \V <L, \\ \V z \V -L & \text{if } \V z \V \geq L. \end{array} \right.
\end{displaymath}
Let $y \in \{Mu > \mu\}$ for $\mu \in \R$. Then there exists an $R>0$ such that \begin{displaymath}
\fint_{B_R(y)} \V u(z)\V  \dz > \mu.
\end{displaymath}
Thus, \begin{align*}
M (h (u))(y) & \geq \fint_{B_R(y)} \V h(u) (z) \V \dz \\
~& = \frac{1}{\V B_R(y) \V} \int_{B_R(y) \cap \{u \geq L\}} \V u(z) \V - L \dz \\
~& \geq \fint_{B_R(y)} \V u(z) \V \dz - \frac{1}{\V B_R(y) \V} \int_{B_R(y) \cap \{u \leq L\}} \V u(z) \V \dz \\ & \hspace{1cm}- \frac{1}{\V B_R(y) \V} \int_{B_R(y)  \cap \{ \V u \V \geq L\}} L \dz \\
~& \geq \mu -L.
\end{align*}
Thus, $\{y \in T_N \colon Mu > \mu\} \subset \{y \in T_N \colon M h(u) (y) > \mu - L\}$.

Using the weak-$L^1$ estimate for the maximal function (Proposition \ref{maximalfct}), we get \begin{equation} \label{estimatex} \begin{aligned}
\V \{ y \in T_N \colon Mu(y) \geq \lambda\} \V & \leq \V \{ y \in T_N \colon M h(u) \geq \lambda-L\} \V\\
~&\leq \frac{1}{\lambda -L} C \int_{T_N} \V h(u)(z) \V \dz\\
~& \leq \frac{C}{L} \int_{T_N \cap \{\V u \V \geq L\}} \V u(z) \V \dz.
\end{aligned}
\end{equation}
This is what we wanted to show. Note that the proof only uses $u \in C^{\infty}(T_N,\Lambda^r)$ to define $v$ and nowhere else, hence estimate \eqref{estimatex} is valid for all $u \in L^1(T_N,\Lambda^r)$.

For general $u \in L^1(T_N,\Lambda^r)$, one may consider a sequence $u_n \in C^{\infty}(T_N,\Lambda^r)$ with $d u_n=0$ and $u_n \to u$ in $L^1$ and pointwise almost everywhere. This sequence can be easily constructed by convolving with standard mollifiers.

Observe that for $\lambda >0$ \begin{align} \label{estimate1}
\int_{\{\V u_n \V \geq 2\lambda\}} \V u_n \V \dy & \leq \int_{\{ \V u_n -u \V \geq \V u \V\} \cap \{ \V u_n \V \geq 2 \lambda\}} \V u_n \V \dy + \int_{\{\V u_n - u \V \leq \V u \V \} \cap \{ \V u_n \V \geq 2 \lambda\}} \V u_n \V \dy \\ \nonumber
~& \leq 2 \int_{\{\V u \geq \lambda\V\}} \V u \V \dy +2 \norm u_n - u \norm_{L^1}.
\end{align}

Furthermore, we use the subadditivity of the maximal function and see that for all $y \in T_N$ \begin{displaymath}
M u_n (y) \leq M u (y) + M(u - u_n)(y).
\end{displaymath}
Thus, \begin{displaymath}
\{y \in T_N \colon M u_n(y) \geq 2\lambda\} \subset \{y \in T_N \colon M u(y) \geq \lambda\} \cup \{ y \in T_N \colon M (u - u_n)(y) \geq \lambda\}.
\end{displaymath} 
Using the weak-$L^1$estimate for the maximal function (Proposition \ref{maximalfct}) we see that \begin{equation} \label{setestimate}
\left \V \{ y \in T_N \colon M u(y) \leq \lambda\} \back \{y \in T_N \colon M u_n(y)  \geq 2 \lambda\} \right \V \longrightarrow 0 \quad \text{as } n  \to  \infty.
\end{equation}
Choose some $\lambda \in (4L,6L)$ such that for all $n \in \N$ $\V \partial \{y \in T_N \colon Mu_n(y) \geq 2\lambda\} \V =0$. Then extend like in the first part of the proof to get a sequence $v_n$ with $dv_n=0$ and
 \begin{enumerate} [a)]
\item $\norm v_n \norm_{L^{\infty}(T_N,\Lambda^r)} \leq 2C_1 \lambda$;
\item $\V \{y \in T_N \colon v_n(y) \neq u_n(y) \V\} \leq \frac{C_2}{2\lambda} \int_{y\colon  \V u_n(y) \V >2 \lambda} \V u_n(y) \V \dy$;
\item $\norm v_n - u_n \norm_{L^1(T_N,\Lambda^r)} \leq C_2 \int_{\{y\colon  \V u_n(y) \V > 2 \lambda\}} \V u_n(y) \V \dy$.
\end{enumerate}
Letting $n \to \infty$, by a) this sequence converges, up to extraction of a subsequence, weakly$*$ to some $v \in L^{\infty}(T_N,\Lambda^r)$. The weak$*$-convergence implies $dv =0$. Moreover, by construction, the set $\{y \in T_N \colon v_n \neq u_n\}$ is contained in the set $\{y \in T_N \colon Mu_n(y) \geq 2 \lambda\}$. As $u_n \to u$ pointwise  a.e. and in $L^1$,
 we get using \eqref{setestimate} that $v = u$ on the set $\{y \in T_N \colon M u(y) \leq \lambda\}$. (If $v_n$ converges to $u$ in measure on a set $A$ and $v_n$ weakly to some $v$, then $v=u$ on $A$.)

Hence, $v$ defined as the weak$*$ limit of $v_n$ satisifies \begin{enumerate} [i)]
\item $\norm v \norm_{L^{\infty}(T_N,\Lambda^r)} \leq C_1 \lambda \leq 6 C_1 L $;
\item using \eqref{estimatex} and $v=u$ on $\{y \in T_N \colon M u(y) \leq \lambda\}$ \begin{displaymath}
 \V \{ y \in T_N \colon u(y) \neq v(y) \} \V \leq \frac{C_2}{L} \int_{\{y \in T_N\colon  \V u(y) \V >L\} } \V u(y) \V \dy; 
\end{displaymath}
\item using triangle inequality and $v_n - u_n \to 0$ in $L^1$, one obtains \begin{displaymath}
\norm v - u \norm_{L^1(T_N,\Lambda^r)} \leq C_2 \int_{\{y \in T_N \colon  \V u(y) \V >L\}} \V u(y) \V \dy.
\end{displaymath}
\end{enumerate}
Hence, $v$ meets the requirements of Theorem \ref{main}.
\end{proof}
 
\begin{coro}[$L^\infty$-truncation for sequences] \label{main2}
Suppose that we have a sequence $u_n \in L^1(\R^N,\Lambda^r)$ with $d u_n=0$, and that there exists $L>0$ such that \begin{displaymath}
\int_{\{y \in T_N \colon \V u_n(y) \V >L \}} \V u_n(y) \V \dy \longrightarrow 0 \quad \text{as } n \to \infty.
\end{displaymath}
There exists a $C_1=C_1(N,r)$ and a sequence $v_n \in L^1(T_N,\Lambda^r)$ with $dv_n=0$ and \begin{enumerate} [a)]
\item $\norm v_n \norm_{L^{\infty}(T_N,\Lambda^r)} \leq C_1 L$;
\item $\norm v_n - u_n \norm_{L^1(T_N,\Lambda^r)} \to 0$ as $n \to \infty$;
\item $\V \{ y \in T_N \colon v_n(y) \neq u_n(y) \} \V \to 0$.
\end{enumerate}
\end{coro}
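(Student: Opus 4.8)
The plan is to deduce the statement directly from the quantitative truncation estimate of Theorem \ref{main}, applied separately to each term of the sequence with the common threshold $L$ supplied by the hypothesis. Set
\[
\varepsilon_n := \int_{\{y \in T_N \colon \V u_n(y)\V > L\}} \V u_n(y)\V \dy ,
\]
so that $\varepsilon_n \to 0$ as $n \to \infty$ by assumption.

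For each fixed $n$ I would apply Theorem \ref{main} to $u = u_n$ with this same value of $L$. Since the constants $C_1, C_2$ produced there depend only on $N$ and $r$, this yields a form $v_n \in L^{\infty}(T_N,\Lambda^r)$ with $dv_n = 0$ and
\[
\norm v_n \norm_{L^{\infty}} \leq C_1 L, \qquad
\V \{v_n \neq u_n\} \V \leq \frac{C_2}{L}\, \varepsilon_n, \qquad
\norm v_n - u_n \norm_{L^1} \leq C_2\, \varepsilon_n .
\]
Then a) is immediate, as the bound $C_1 L$ is independent of $n$; b) follows from $C_2 \varepsilon_n \to 0$; and c) follows from $\tfrac{C_2}{L}\varepsilon_n \to 0$. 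No passage to a subsequence is required, and the constraint $dv_n = 0$ is inherited directly from Theorem \ref{main}, so nothing else needs to be verified.

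There is essentially no obstacle here: the only point that has to be observed is that Theorem \ref{main} is \emph{quantitative} — the $L^1$-error and the measure of the exceptional set are both controlled linearly by the excess mass $\int_{\{\V u\V > L\}} \V u\V$ — which is exactly what makes a hypothesis of the form $\varepsilon_n \to 0$ propagate to all three conclusions. (If one prefers the statement on $\R^N$, as in Theorem \ref{Thm:intro}, the same argument applies verbatim with the $\R^N$-version of the truncation theorem in place of Theorem \ref{main}.)
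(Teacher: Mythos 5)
Your proposal is correct and matches the paper's argument exactly: the paper proves Corollary \ref{main2} by the single observation that it "directly follows by applying Theorem \ref{main}," and your write-up simply makes that application explicit, term by term with the common threshold $L$. Nothing further is needed.
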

This directly follows by applying Theorem \ref{main}.

The proof of Theorem \ref{main} also works if $L^1$ is replaced by $L^p$ for $1<p< \infty$. Furthermore, we do not need to restrict us to periodic functions on $\R^N$, the statement is also valid for non-periodic functions.

\begin{prop} \label{main3}
Let $1 \leq p < \infty$. There exist constants $C_1,C_2 >0$, such that, for all  $u \in L^p(\R^N,\Lambda^r)$ with $du=0$ and all $L>0$, there exists $v \in L^p(\R^N,\Lambda^r)$ with $dv=0$ and \begin{enumerate} [i)]
\item $\norm v \norm_{L^{\infty}(\R^N,\Lambda^r)} \leq C_1 L$;
\item $\V \{y \in \R^N \colon v(y) \neq u(y) \V\} \leq \frac{C_2}{L^p} \int_{\{y \in \R^N \colon  \V u(y) \V >L\}} \V u(y) \V^p \dy$;
\item $\norm v - u \norm^p_{L^p(\R^N,\Lambda^r)} \leq C_2 \int_{\{y \in \R^N \colon  \V u(y) \V >L\}} \V u(y) \V^p \dy$.
\end{enumerate}
\end{prop}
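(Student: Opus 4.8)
The plan is to mimic the proof of Theorem~\ref{main} almost verbatim, with three substitutions: replace the periodic extension Corollary~\ref{torus} by its counterpart on all of $\R^N$, Corollary~\ref{unbounded}; replace the weak-$L^1$ estimate for the maximal function by the weak-$(p,p)$ estimate of Proposition~\ref{maximalfct}, which is the only maximal-function bound actually used and which holds for every $1\le p<\infty$; and carry the exponent $p$ through the elementary truncation inequalities.

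First I would treat smooth $u$. Let $u\in C^\infty(\R^N,\Lambda^r)\cap L^p(\R^N,\Lambda^r)$ with $du=0$ and $L>0$, and set $X_\lambda=\{y\in\R^N\colon Mu(y)\le\lambda\}$. This set is closed (by lower semicontinuity of $Mu$) and has complement of finite measure, since $\V\{Mu>\lambda\}\V\le C_p\lambda^{-p}\norm u\norm_{L^p}^p<\infty$; moreover $\V\partial X_\lambda\V=0$ for a generic $\lambda$, the level sets $\{Mu=\lambda\}$ being pairwise disjoint, exactly as in the proof of Theorem~\ref{main}. Fix such a $\lambda\in[2L,3L]$. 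By Lemma~\ref{maximalf} the pair $(u,X_\lambda)$ satisfies the hypotheses of Corollary~\ref{unbounded} with constant $C\lambda$, so there is $v\in L^1_{\loc}(\R^N,\Lambda^r)$ with $dv=0$, $v=u$ on $X_\lambda$, $\norm v\norm_{L^\infty}\le C\lambda\le 3CL$, and $\{v\ne u\}\subset X_\lambda^C$. Since $X_\lambda^C$ has finite measure, $v$ is bounded there, and $u\in L^p$, the difference $v-u$ is supported in $X_\lambda^C$ and lies in $L^p$; hence $v\in L^p(\R^N,\Lambda^r)$. This upgrade from $L^1_{\loc}$ to $L^p$ is the one point genuinely absent from the torus argument.

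It remains to prove (ii) and (iii). For (ii) I would run the maximal-function computation from the proof of Theorem~\ref{main} with the truncation $h(z)=\max\{\V z\V-L,0\}$: from $\fint_{B_R(y)}\V u\V\dz>\mu$ one obtains $M(h(u))(y)\ge\mu-L$, hence $\{Mu>\mu\}\subset\{M(h(u))>\mu-L\}$; taking $\mu=\lambda\ge 2L$, noting $\norm h(u)\norm_{L^p}^p\le\int_{\{\V u\V>L\}}\V u\V^p$, and applying the weak-$(p,p)$ estimate gives
\begin{displaymath}
\V\{v\ne u\}\V\le\V X_\lambda^C\V\le\frac{C_p}{(\lambda-L)^p}\int_{\{\V u\V>L\}}\V u\V^p\dy\le\frac{C_2}{L^p}\int_{\{\V u\V>L\}}\V u\V^p\dy.
\end{displaymath}
For (iii), splitting $X_\lambda^C$ according to whether $\V u\V>L$ or $\V u\V\le L$ and using $\norm v\norm_{L^\infty}\le C_1L$ and (ii),
\begin{displaymath}
\norm v-u\norm_{L^p(\R^N,\Lambda^r)}^p=\int_{X_\lambda^C}\V v-u\V^p\le 2^{p-1}\Big((C_1L)^p\V X_\lambda^C\V+L^p\V X_\lambda^C\V+\int_{\{\V u\V>L\}}\V u\V^p\Big)\le C_2\int_{\{\V u\V>L\}}\V u\V^p\dy.
\end{displaymath}

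Finally, for general $u\in L^p(\R^N,\Lambda^r)$ with $du=0$, I would mollify to obtain smooth closed $u_n\to u$ in $L^p$ and a.e., apply the smooth case to each $u_n$ using a common threshold $2\lambda$ with $\lambda\in(4L,6L)$ chosen (generically) so that $\V\partial\{Mu_n\le 2\lambda\}\V=0$ for every $n$, and obtain $v_n$ bounded in $L^\infty\cap L^p$ with $v_n=u_n$ on $\{Mu_n\le 2\lambda\}$. Passing to a weak-$*$ limit $v$ along a subsequence yields $dv=0$ and $\norm v\norm_{L^\infty}\le C_1L$; since $\{Mu\le\lambda\}\setminus\{Mu_n\le 2\lambda\}\subset\{M(u-u_n)>\lambda\}$ has measure tending to $0$ by the weak-$(p,p)$ estimate, $v_n\to u$ in measure on $\{Mu\le\lambda\}$, so $v=u$ there — which also forces $v\in L^p$ even when $p=1$ (write $v$ as $u$ on $\{Mu\le\lambda\}$ plus a bounded function supported on a set of finite measure). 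Then (ii) follows from the displayed estimate applied to $u$ itself, and (iii) from lower semicontinuity of the $L^p$-norm together with the $L^p$-analogue of \eqref{estimate1}, namely $\int_{\{\V u_n\V>2\lambda\}}\V u_n\V^p\le 2^p\big(\int_{\{\V u\V>\lambda\}}\V u\V^p+\norm u-u_n\norm_{L^p}^p\big)$. I expect no genuine obstacle: the content is identical to that of Theorem~\ref{main}, and the only things requiring a check are that the weak-$(p,p)$ bound suffices at every step and that the $L^1_{\loc}$-valued extension produced by Corollary~\ref{unbounded} is in fact globally in $L^p$.
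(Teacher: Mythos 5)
Your proposal is correct and follows exactly the route the paper intends (the paper itself dispatches Proposition~\ref{main3} by saying the proof is ``pretty much the same as for Theorem~\ref{main}''): you replace Corollary~\ref{torus} by Corollary~\ref{unbounded}, replace the weak-$L^1$ bound by the weak-$(p,p)$ bound of Proposition~\ref{maximalfct}, and carry the exponent through the elementary estimates, and you correctly isolate the two places needing genuine care — that the $L^1_{\loc}$ extension actually lies in $L^p$ because it differs from $u$ only on a finite-measure set where it is bounded, and that the truncation inequality $\{Mu>\mu\}\subset\{M(h(u))>\mu-L\}$ combines with the $L^p$ maximal bound. The one small remark: for item iii) in the density step you invoke lower semicontinuity of the $L^p$ norm, which for $p=1$ requires viewing $v_n-u_n$ as measures converging weak-$*$ (the total variation is then lower semicontinuous); it is slightly cleaner to note that once $v=u$ on $\{Mu\le\lambda\}$ and ii) is known for $u$, item iii) follows from ii) by the same pointwise splitting you already wrote out in the smooth case, with no limiting argument needed.
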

As described, the proof is pretty much the same as for Theorem \ref{main}.
We may also want to truncate closed forms supported on an open bounded subset $\Omega \subset \R^N$ (c.f. \cite{Sebastian,Solenoidal}). This is possible, but we may lose the property, that they are supported in this subset. Let us, for simplicity, consider balls $\Omega=B_{\rho}(0)$ and, after rescaling, $\rho=1$.

\begin{prop}\label{main4}
Let $1 \leq p < \infty$. There exist constants $C_1,C_2 >0$ such that, for all  $u \in L^p( \R^N,\Lambda^r)$ with $du=0$ and $\spt(u) \subset B_1(0)$ and all $L>0$, there exists $v \in L^p(\R^N,\Lambda^r)$ with $dv=0$ and \begin{enumerate} [i)]
\item $\norm v \norm_{L^{\infty}(\R^N,\Lambda^r)} \leq C_1 L$;
\item $\V \{y \in \R^N \colon v(y) \neq u(y) \V\} \leq \frac{C_2}{L^p} \int_{\{y \in \R^N \colon  \V u(y) \V >L\}} \V u(y) \V^p \dy$;
\item $\norm v - u \norm^p_{L^p(\R^N,\Lambda^r)} \leq C_2 \int_{\{y \in \R^N \colon  \V u(y) \V >L\}} \V u(y) \V^p \dy$;
\item $\spt(v) \subset B_R(0)$, where $R$ only depends on the $L^p$-norm of $u$ and on $L$.
\end{enumerate}
\end{prop}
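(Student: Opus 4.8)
The plan is to run the construction from the proof of Proposition~\ref{main3} (equivalently Theorem~\ref{main} on $\R^N$) essentially verbatim and merely track where the resulting $v$ can fail to vanish. The only genuinely new ingredient is that the Hardy--Littlewood maximal function of a form supported in $B_1(0)$ decays quantitatively away from $B_1(0)$, so the set on which the truncation may differ from $u$ is trapped in a ball whose radius is controlled by $N$, $L$ and $\norm u \norm_{L^p}$.

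First I would record the decay estimate. Let $\rho := \V y \V > 1$ and $R>0$. Either $B_R(y) \cap B_1(0) = \emptyset$, in which case $\int_{B_R(y)} \V u \V \dz = 0$, or $R \ge \rho - 1$, in which case, writing $b_N := \V B_1(0) \V$,
\[
\frac{1}{\V B_R(y) \V} \int_{B_R(y)} \V u(z) \V \dz \;\le\; \frac{1}{b_N(\rho-1)^N} \int_{B_1(0)} \V u(z) \V \dz \;=\; \frac{\norm u \norm_{L^1}}{b_N(\rho-1)^N}.
\]
Taking the supremum over $R>0$ gives $M u(y) \le \norm u \norm_{L^1}\, b_N^{-1}(\rho-1)^{-N}$ whenever $\V y \V = \rho > 1$. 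For $p>1$, H\"older's inequality yields $\norm u \norm_{L^1} \le \V B_1(0) \V^{1-1/p} \norm u \norm_{L^p}$, and for $p=1$ this is trivial, so in either case $\norm u \norm_{L^1}$ is controlled by $\norm u \norm_{L^p}$. Hence, for every $\lambda>0$,
\[
\{ y \in \R^N \colon M u(y) > \lambda \} \;\subset\; B_{R(\lambda)}(0), \qquad R(\lambda) := 1 + \bigl( \norm u \norm_{L^1}\, b_N^{-1} \lambda^{-1} \bigr)^{1/N}.
\]

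Next I would invoke the proof of Proposition~\ref{main3}: the form $v$ produced there satisfies (i)--(iii) and, by construction, coincides with $u$ on $\{ M u \le \lambda \}$ for a truncation level $\lambda$ comparable to $L$ (in all variants one has $\lambda \ge 2L$, so $R(\lambda) \le 1 + (\norm u \norm_{L^1}(2 b_N L)^{-1})^{1/N}$). Since $\R^N \setminus B_{R(\lambda)}(0) \subset \{ M u \le \lambda \}$ by the previous display, while $\spt(u) \subset B_1(0) \subset B_{R(\lambda)}(0)$ forces $u \equiv 0$ on $\R^N \setminus B_{R(\lambda)}(0)$, we conclude $v \equiv 0$ outside $B_{R(\lambda)}(0)$, i.e.\ $\spt(v) \subset B_R(0)$ with $R := R(\lambda) + 1$, which depends only on $N$, $L$ and $\norm u \norm_{L^p}$ (the case $u \equiv 0$ being trivial). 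Properties (i)--(iii) are inherited unchanged from Proposition~\ref{main3}, so (iv) follows.

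I do not expect a substantial obstacle here; the one point needing a little care is that for merely $L^p$ data the form $v$ arises as a weak$*$ limit of truncations $v_n$ of mollifications $u_n$ of $u$, so one should verify that the support bound is uniform in $n$. This is immediate: $\spt(u_n) \subset B_2(0)$ and $\norm u_n \norm_{L^1} \le \norm u \norm_{L^1}$ for all $n$ (mass-one mollifiers do not increase the $L^1$ norm), so the decay estimate above applied to each $u_n$ produces a single ball containing every set $\{ M u_n > 2\lambda\}$; the constraint that $v_n$ vanish outside that ball then passes to the weak$*$ limit, since $\int_{\R^N} v_n \wedge \psi = 0$ for every test form $\psi$ supported away from it.
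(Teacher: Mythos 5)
Your proposal is correct and follows precisely the approach the paper sketches: the paper only remarks that property (iv) ``comes from the fact that if a function $u$ is supported in $B_1(0)$, then its maximal function $Mu(y)$ decays fast as $y \to \infty$,'' and you supply the quantitative decay bound $Mu(y) \leq \norm u \norm_{L^1} b_N^{-1}(\V y\V-1)^{-N}$, control $\norm u\norm_{L^1}$ by $\norm u\norm_{L^p}$ via H\"older on $B_1(0)$, and combine this with $v=u$ on $\{Mu \leq \lambda\}$ to trap $\spt(v)$ in a ball. The final paragraph about uniformity of the support bound over the mollified sequence $u_n$ is a useful safety check, though the direct route you already give (decay of $Mu$ itself plus $v=u$ on $\{Mu\le\lambda\}$ for the limit $v$) suffices on its own.
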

Again, this proof is very similar to the proof of Theorem \ref{main}. Property iv) comes from the fact that if a function $u$ is supported in $B_1(0)$, then its maximal function $Mu(y)$ decays fast as $y \to \infty$. Let us mention that this result also holds for vector-valued differential forms, i.e. $u \in L^p(\R^N,\Lambda^r \times \R^m)$, where the exterior derivative is taken componentwise.
\begin{prop}[Vector-valued forms on the torus] \label{main5}
There exist constants $C_1,C_2 >0$ such that, for all  $u \in L^1(T_N,\Lambda^r\times\R^m)$ with $du=0$ and all $L>0$, there exists $v \in L^1(T_N,\Lambda^r \times \R^m)$ with $dv=0$ and \begin{enumerate} [i)]
\item $\norm v \norm_{L^{\infty}(T_N,\Lambda^r \times \R^m)} \leq C_1 L$;
\item $\V \{y \in T_N \colon v(y) \neq u(y) \V\} \leq \frac{C_2}{L} \int_{\{y \in T_N\colon  \V u(y) \V >L\}} \V u(y) \V \dy$;
\item $\norm v - u \norm_{L^1(T_N,\Lambda^r \times \R^m)} \leq C_2 \int_{\{y \in T_N \colon  \V u(y) \V >L\}} \V u(y) \V \dy$.
\end{enumerate}
\end{prop}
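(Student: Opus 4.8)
The plan is to reduce the vector-valued statement to the scalar-valued Theorem~\ref{main} by truncating componentwise, as already announced in the introduction. Write $u = (u^{(1)},\dots,u^{(m)})$ with $u^{(j)} \in L^1(T_N,\Lambda^r)$ for $j \in [m]$. Since the exterior derivative acts on each $\R^m$-slot separately, the constraint $du = 0$ in the sense of distributions is equivalent to $du^{(j)} = 0$ for every $j$. I would then apply Theorem~\ref{main} to each $u^{(j)}$ with the common level $L$, obtaining $v^{(j)} \in L^\infty(T_N,\Lambda^r)$ with $dv^{(j)} = 0$ together with the three estimates i)--iii) of that theorem for the pair $(u^{(j)},v^{(j)})$, and set $v := (v^{(1)},\dots,v^{(m)})$. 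Then $dv = 0$ componentwise, hence in the sense of distributions.

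It remains to assemble the three conclusions, which is elementary. For i), since $\V v(y) \V^2 = \sum_{j=1}^m \V v^{(j)}(y) \V^2 \le m\,(C_1 L)^2$ for a.e.\ $y$, one gets $\norm v \norm_{L^\infty} \le \sqrt m\, C_1 L$. For ii) and iii) the key pointwise bound is $\V u^{(j)}(y) \V \le \V u(y) \V$, which gives $\{\V u^{(j)}\V > L\} \subseteq \{\V u\V > L\}$ and hence
\begin{displaymath}
\sum_{j=1}^m \int_{\{\V u^{(j)}\V > L\}} \V u^{(j)} \V \dy \;\le\; \sum_{j=1}^m \int_{\{\V u \V > L\}} \V u^{(j)} \V \dy \;\le\; m \int_{\{\V u \V > L\}} \V u \V \dy .
\end{displaymath}
Combining this with the subadditivity bounds $\V \{v \neq u\} \V \le \sum_{j=1}^m \V \{v^{(j)} \neq u^{(j)}\}\V$ and $\norm v - u \norm_{L^1} \le \sum_{j=1}^m \norm v^{(j)} - u^{(j)} \norm_{L^1}$ yields ii) and iii) after renaming $C_1,C_2$; since the number $m$ of components is fixed, absorbing the factors $\sqrt m$ and $m$ into the constants is harmless.

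I do not expect a genuine obstacle here: the argument is pure bookkeeping, and the only mild point is tracking how the chosen product norm on $\Lambda^r \times \R^m$ produces the $m$-dependence of the constants. If one insisted on constants independent of $m$, I would instead re-run the proof of Theorem~\ref{main} directly with the single ``good set'' $X = \{y \in T_N \colon M(\V u\V)(y) \le \lambda\}$: on $X$ one has $M u^{(j)} \le M(\V u \V) \le \lambda$ for every $j$, so Lemma~\ref{maximalf} applies simultaneously to all components on this common $X$, and applying the Whitney-type extension of Lemma~\ref{extension} (resp.\ Corollary~\ref{torus}) componentwise with this $X$ produces $v$ with $\{v \neq u\} \subseteq X^C$ and $\norm v \norm_{L^\infty} \le C\lambda$ in the product norm; the estimates \eqref{TS1}--\eqref{TS2} then go through essentially unchanged.
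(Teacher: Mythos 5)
Your proposal is correct and matches the paper's approach, which is summarized in one sentence after Proposition~\ref{main5} ("This statement follows directly from the proof of Theorem \ref{main} by simply truncating every component of $u$"). Both variants you describe are valid: applying Theorem~\ref{main} componentwise with $m$-dependent constants is harmless since $m$ is fixed in the statement, and re-running the proof of Theorem~\ref{main} with the single good set $X=\{M(\V u\V)\le\lambda\}$ and componentwise Whitney extension is what the paper's phrase most directly suggests.
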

This statement follows directly from the proof of Theorem \ref{main} by simply truncating every component of $u$. Likewise, similar statements as in Propositions \ref{main2}, \ref{main3} and \ref{main3} follow for vector-valued differential forms.


\section{Applications} \label{secappl}
In the following, we consider a linear and homogeneous differential operator of first order, i.e. we are given $\A: C^{\infty}(\R^N,\R^d) \to C^{\infty}(\R^N,\R^l)$ of the form \begin{displaymath}
\A u= \sum_{k=1}^N A _k \partial_k u, 
\end{displaymath}
where $A_k : \R^d \to \R^l$ are linear maps. We call a continuous function $f: \R^d \to \R$ $\A$-quasiconvex if for all $\phi \in C^{\infty}(T_N,\R^d)$ with $\int_{T_N} \phi(y) \dy =0$ and $\A \phi =0$, and for all $x \in \R^d$ then the following version of Jensen's inequality \begin{equation} \label{Aqcdef}
f(x) \leq \int_{T_N} f(x + \phi(y)) \dy
\end{equation}
holds true.
\textsc{Fonseca} and \textsc{M\"uller} showed that \cite{FM}, if the constant rank condition seen below holds, then $\A$-quasiconvexity is a neccessary and sufficient condition for weak$*$ lower-semicontinuity of the functional $I: L^{\infty}(\Omega, \R^d) \to [0, \infty)$ defined by \begin{displaymath}
I(u) = \int_{\Omega} f(u(y)) \dy.
\end{displaymath}

\noindent Define the symbol  $\Aop \colon \R^N \back \{0\} \to \Lin(\R^d,\R^l)$ of the operator $\A$ by \begin{displaymath}
\Aop (\xi) = \sum_{k=1}^N \xi_k A_k.
\end{displaymath} 
The operator $\A$ is said to satisfy the \textbf{constant rank property} (c.f. \cite{Murat}) if for some fixed $r \in \{0,...,d\}$ and all $\xi \in \Sphere^{N-1} = \{\xi \in \R^N \colon \V \xi \V=1\}$ \begin{displaymath}
\dim (\ker \Aop(\xi)) =r.
\end{displaymath}
We call a homogeneous differential operator $\B:C^{\infty}(T_N,\R^m) \to C^{\infty}(T_N,\R^d)$, which is not neccessarily of order one, the potential of $\A$ if \begin{displaymath}
\psi \in C^{\infty}(T_N,\R^d) \cap \ker \A,~ \int_{T_N} \psi(y) \dy=0 ~\Longleftrightarrow \exists \phi \in C^{\infty}(T_N,\R^m) \text{ s. t.} \psi= \B \phi.
\end{displaymath}

Recently, \textsc{Rai\c{t}\u{a}} showed that $\A$ has such a potential if and only if $\A$ satisfies the constant rank property (\cite{Raita}). In the following, we always assume that $\A$ satisfies the constant rank property and that $\B$ is the potential of $\A$. 
\begin{defi}
We say that $\A$ satisfies the property (ZL) if it admits a statement of the type of Corollary \ref{main2}, i.e. for all sequences $u_n \in L^1(T_N,\R^d) \cap \ker \A$ such that there exists an $L>0$ with 
\begin{displaymath}
\int_{\{y \in T_N \colon \V u_n(y) \V >L \}} \V u_n(y) \V \dy \longrightarrow 0 \quad  \text{as } n \to \infty,
\end{displaymath}
there exists a $C=C(\A)$ and a sequence $v_n \in L^1(T_N,\R^d) \cap \ker \A$ such that \begin{enumerate} [i)]
\item $\norm v_n \norm_{L^{\infty}(T_N,\R^d)} \leq C_1 L$;
\item $\norm v_n - u_n \norm_{L^1(T_N,\R^d)} \to 0$ as $n \to \infty$.

\end{enumerate}
\end{defi}
Our goal now is to show that (ZL) implies further properties for the operator $\A$. We first look at a few examples.

\begin{ex} \label{ex:op} \begin{enumerate} [a)]
\item As shown by Zhang \cite{Zhang}, the operator $\A= \curl$ has the property (ZL). This is shown by using that its potential is the operator $\B= \nabla$. In fact, most of the applications here have been shown for $\B = \nabla$ relying on (ZL), but can be reformulated for $\A$ satisfying (ZL).
\item Let $W^k= (\R^N \otimes ... \otimes \R^N)_{\sym} \subset (\R^N)^k$. We may identify $u \in C^{\infty}(T_N,W^k)$ with $\tilde{u} \in C^{\infty}(T_N,(\R^N)^k)$ and define the operator \begin{displaymath}
\curl^{(k)} \colon C^{\infty}(T_N,W^k) \to C^{\infty}(T_N, (\R^N)^{k-1} \times \Lambda^2)
\end{displaymath} as taking the $\curl$ on the last component of $\tilde{u}$, i.e. for $I \in [N]^{k-1}$ \begin{displaymath}
(\curl^{(k)} u)_I = 1/2 \sum_{i,j \in \N} \partial_i \tilde{u}_{I j} - \partial_j \tilde{u}_{I i}  e_i \wedge e_j
\end{displaymath}
Note that this operator has the potential $\nabla^k:C^{\infty}(\R^N,\R) \to C^{\infty}(\R^N,W^k)$ (c.f. \cite{Meyers}). To the best of the author's knowledge the proof of the property (ZL) is in this setting not written down anywhere explicitly, but basically combining the works \cite{Acerbi,Francos,Stein,Zhang} yields the result (c.f. \cite{Schiffer2}).

\item In this work, it has been shown that the exterior derivative $d$ satisfies the property (ZL). The most prominent example is $\A=\divergence$.
\item The result is also true, if we consider matrix-valued functions instead (c.f. Proposition \ref{main4}).. For example, (ZL) also holds if we consider  $\divergence: C^{\infty}(\R^N,\R^{N \times M}) \to C^{\infty}(\R^N,\R^M)$, where \begin{displaymath}
\divergence _i u(x) = \sum_{j=1}^N \partial_j u_{ji} (x).
\end{displaymath}

\item Likewise, let $\A_1\colon C^{\infty}(T_N,\R^{d_1}) \to C^{\infty}(T_N,\R^{l_1})$ and $\A_1 \colon C^{\infty}(T_N,\R^{d_2}) \to C^{\infty}(T_N,\R^{l_2})$ be two differential operators satisfying (ZL). Then also the operator \begin{displaymath}
\A: C^{\infty}(T_N, \R^{d_1}\times \R^{d_2}) \to C^{\infty}(T_N,\R^{l_1}\times \R^{l_2})
\end{displaymath} defined componentwise for $u=(u_1,u_2)$ by \begin{displaymath}
\A (u_1,u_2) = (\A_1 u_1, \A_2 u_2)
\end{displaymath}
satisfies the property (ZL). The truncation is again done separately in the two components.
\end{enumerate}
\end{ex}
\begin{rem} This paper is concerned with $L^{\infty}$-truncation, i.e. we are in a situation with very low regularity. There also has been some progress when truncating divergence-free fields in $W^{1,\infty}$, using similar methods (c.f. \cite{Solenoidal} or \cite{Sebastian}). \end{rem}

An overview of the results one is able to prove using property (ZL) can be found in the lecture notes \cite[Sec. 4]{Mueller_var} and in the book \cite[Sec. 4,7]{Rindlerbook}, where they are formulated for the case of ($\curl$)-quasiconvexity. \medskip

\subsection{$\A$-quasiconvex hulls of compact sets} \label{Aqchulls}
For $f \in C(\R^d,\R)$ we can define the quasiconvex hull of $f$ by (c.f. \cite{FM,BFL}) \begin{equation} \label{fcthull}
\QA f(x) := \inf \left\{ \int_{T_N} f(x + \psi(y)) \dy \colon \psi \in C^{\infty}(T_N,\R^d) \cap \ker \A,~\int_{T_N} \psi =0 \right\}.
\end{equation} 
$\QA f$ is the largest $\A$-quasiconvex function below $f$ \cite{FM}.

In view of the separation theorem for convex sets in Banach spaces we define (c.f. \cite{CMO2,Sverak4,Sverak3}) the $\A$-quasiconvex hull  of a set $K \subset \R^d$ by  \begin{displaymath}
K^{\A qc}_{\infty} := \left \{ x \in \R^d \colon ~ \forall f \colon \R^d \to \R \text{ } \A \text{-quasiconvex with } f_{\V K} \leq 0 \text{ we have } f(x) \leq 0 \right \},
\end{displaymath}
and the $\A$-$p$-quasiconvex hull for $1 \leq p <\infty$ by 
\begin{align*}
K^{\A qc}_p:= &\left \{   x \in \R^d \colon ~ \forall f \colon \R^d \to \R \text{ } \A \text{-quasiconvex with } f_{\V K} \leq 0\right. \text{ and } \\~ &\quad \left.\V f(v) \V \leq C(1+ \V v \V^p)   \text{ we have } f(x) \leq 0 \right \}.
\end{align*}
The $\A$-$p$-quasiconvex hull for $1 \leq p <\infty$ can be alternatively defined via\begin{align*}
    K^{\A qc*}_p:= &\left \{   x \in \R^d \colon( \QA \dist^p(\cdot,K)) (x)=0 \right \}.
\end{align*}
If $K$ is compact, then $K^{\A qc}_p=K^{\A qc*}_p$.
Moreover, the spaces $K^{\A qc}_{p}$ are nested, i.e. $K^{\A qc}_{q} \subset K^{\A qc}_{q'}$ if $q \leq q'$.  In \cite{CMO2} it is shown that equality holds for $\A$ being the symmetric divergence of a matrix, $K$ compact  and $1 < q,q'< \infty$. The proof can be adapted for different $\A$, but uses the Fourier transform and is not suitable for the cases $p=1$ and $p=\infty$. Here, the property (ZL) comes into play.

For a compact set $K$ we define the set $K^{\A app}$ (c.f. \cite{Mueller_var}) as the set of all $x \in \R^d$ such that there exists a bounded sequence $u_n \in L^\infty(T_N,\R^d) \cap \ker \A$ with \begin{displaymath}
\dist(x +  u_n, K) \longrightarrow 0 \quad \text{in measure, as } n \to \infty.
\end{displaymath}

\begin{thm} \label{ZLappl} Suppose that $K$ is compact and $\A$ is an operator satisfying (ZL). Then \begin{equation} \label{set1}
K^{\A app} = K^{\A qc}_{\infty} = \left\{x \in \R^d \colon \QA (\dist(\cdot,K)) (x) = 0\right\}.
\end{equation}
\end{thm}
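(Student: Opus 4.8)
The plan is to establish the cyclic chain of inclusions
\[
K^{\A qc}_{\infty} \ \subseteq\ \bigl\{x \in \R^d \colon \QA (\dist(\cdot,K))(x) = 0\bigr\} \ \subseteq\ K^{\A app} \ \subseteq\ K^{\A qc}_{\infty},
\]
which closes the loop. Write $d_K := \dist(\cdot,K)$ and, using compactness of $K$, fix $R>0$ with $K \subseteq B_R(0)$, so that $d_K(z) \ge \V z\V - R$ whenever $\V z\V \ge R$; recall also that $x\in K^{\A app}$ is witnessed by a bounded sequence $w_n = x+u_n \in L^{\infty}(T_N,\R^d)\cap\ker\A$ which we may normalise so that $\fint_{T_N}u_n=0$ (equivalently, the barycenter of $w_n$ equals $x$) and $d_K(w_n)\to 0$ in measure. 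The first inclusion is elementary: since the constant $0$ is $\A$-quasiconvex and $0 \le d_K$, maximality of $\QA d_K$ among $\A$-quasiconvex functions below $d_K$ gives $0 \le \QA d_K \le d_K$, so $\QA d_K$ is a finite, continuous, $\A$-quasiconvex function with $(\QA d_K)|_K \le 0$; for $x \in K^{\A qc}_{\infty}$, testing the definition of $K^{\A qc}_\infty$ against the admissible function $\QA d_K$ yields $\QA d_K(x) \le 0$, and combined with $\QA d_K \ge 0$ this forces $\QA d_K(x)=0$.

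For the second inclusion — the step where property (ZL) is indispensable — I would start from $x$ with $\QA d_K(x)=0$, pick $\psi_n \in C^{\infty}(T_N,\R^d)\cap\ker\A$ of zero mean with $\fint_{T_N} d_K(x+\psi_n) \to 0$, and set $u_n := x+\psi_n \in \ker\A$ (constants are $\A$-free since $\A$ has positive order), so $d_K(u_n) \to 0$ in $L^1(T_N)$. The point where compactness of $K$ is used is that for $L \ge 2R$ one has $\V u_n(y)\V \le 2\,d_K(u_n(y))$ on $\{\V u_n\V>L\}$, hence $\int_{\{\V u_n\V>L\}}\V u_n\V \le 2\int_{T_N} d_K(u_n) \to 0$, which is precisely the hypothesis of (ZL). Property (ZL) then produces $v_n \in L^{1}(T_N,\R^d)\cap\ker\A$ with $\norm v_n\norm_{L^{\infty}} \le C L$ and $\norm v_n-u_n\norm_{L^1}\to 0$; consequently $d_K(v_n)\le d_K(u_n)+\V v_n-u_n\V\to 0$ in $L^1$, hence in measure, and $\fint_{T_N}v_n \to \fint_{T_N}u_n = x$. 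Replacing $v_n$ by $w_n := v_n - \fint_{T_N}v_n + x$ — a perturbation by a vanishing constant that preserves $\ker\A$, the uniform bound and $d_K(w_n)\to0$ in measure — gives a bounded $\A$-free sequence with barycenter exactly $x$ and $d_K(w_n)\to0$ in measure, i.e. $x\in K^{\A app}$.

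For the third inclusion I would take $x\in K^{\A app}$ with witness $w_n = x+u_n$ as above and mollify to make $w_n$ smooth: convolution commutes with $\A$, does not increase the sup-norm, preserves the mean, and only perturbs $w_n$ in $L^1$, which is harmless since $d_K$ is $1$-Lipschitz, so after a diagonal argument we keep all the listed properties. Jensen's inequality \eqref{Aqcdef} for the $\A$-free field $w_n$ then gives, for any continuous $\A$-quasiconvex $f$ with $f|_K\le0$, that $f(x)=f(\fint_{T_N}w_n)\le \fint_{T_N}f(w_n)$. The uniform bound on $\{w_n\}$ confines its values to a fixed ball $\overline{B_{C'}(0)}$, on which $f$ is bounded; splitting $T_N$ along $\{d_K(w_n)<\delta\}$, whose complement has measure $\to0$, and using $\max_K f\le0$ and continuity of $f$, one obtains $\limsup_n\fint_{T_N}f(w_n)\le\sup\{f(z):\V z\V\le C',\ d_K(z)\le\delta\}$, and this supremum decreases to $\max_K f\le 0$ as $\delta\downarrow0$. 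Hence $f(x)\le0$; since $f$ was arbitrary, $x\in K^{\A qc}_{\infty}$.

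The hard part is the second inclusion: a near-minimiser for $\QA d_K(x)=0$ is only bounded in $L^1$, so one genuinely has to truncate it, and the only thing that makes the truncation hypothesis verifiable is the compactness of $K$, which makes $d_K$ dominate $\V\cdot\V$ far from the origin. The remaining difficulties — pinning barycenters at $x$ through the truncation and the mollification, and the passage between $C^{\infty}$ and $L^{\infty}$ test fields, together with the fact that finite $\A$-quasiconvex functions are continuous — are routine bookkeeping of the kind deferred to the Appendix.
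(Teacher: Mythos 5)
Your proposal is correct and follows essentially the same route as the paper: the identical cyclic chain of three inclusions, with the trivial maximality step $K^{\A qc}_{\infty}\subseteq\{\QA d_K=0\}$, the (ZL)-truncation of a near-minimising sequence for $\QA d_K(x)$ (made admissible precisely because compactness of $K$ forces $\V u_n\V\le 2\, d_K(u_n)$ for $\V u_n\V$ large) to get $\{\QA d_K=0\}\subseteq K^{\A app}$, and the Jensen/dominated-convergence step for $K^{\A app}\subseteq K^{\A qc}_{\infty}$. Your extra bookkeeping of barycenters (re-centering $v_n$ by $\fint v_n - x$ and mollifying before invoking \eqref{Aqcdef}) is a more careful rendering of a point the paper glosses over, but it does not change the argument.
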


\begin{proof} 
We first prove $K^{\A app} \subset K^{\A qc}_{\infty}$. Let $x \in K^{\A app}$ and take an arbitrary $\A$-quasiconvex function $f: \R^d \to [0,\infty)$ with $f_{\V K} =0$. We claim that then $f(x)=0$.

Take a sequence $u_n$ from the definition of $K^{\A app}$. As $f$ is continuous and hence locally bounded, $f(u_n) \to 0$ in measure and $0 \leq f( u_n) \leq C$. Quasiconvexity and dominated convergence yield \begin{displaymath}
f(x) \leq \liminf_{n \to \infty} \int_{T_N} f(x +  u_n(y)) \dy = 0.
\end{displaymath}
$K^{\A qc}_{\infty} \subset \left\{x \in \R^d \colon \QA (\dist(\cdot,K)) (x) = 0\right\}$ is clear by definition, as $\QA (\dist(\cdot,K))$ is an admissible separating function.

The proof of the inclusion $\{ x \in \R^d \colon \QA (\dist(\cdot,K)) (x) = 0\} \subset  K^{\A app}$ uses (ZL). If $\QA (\dist(\cdot,K)) =0$, then there exists a sequence $\phi_n \in C^{\infty}(T_N,\R^d) \cap \ker \A$ with $\int_{T_N} \phi_n =0 $ such that \begin{displaymath}
0= \QA (\dist(\cdot,K))(x) = \lim_{n \to \infty} \int_{T_N} \dist(x +  \phi_n(y),K) \dy.
\end{displaymath}
As $K$ is compact, there exists $R>0$ such that $K \subset B(0,R)$. Moreover, as $x \in K_{\infty}^{\A qc}$, also $x \in B(0,R)$. This implies that \begin{displaymath}
\lim_{n \to \infty} \int_{T_N \cap \{\V  \phi_n \V \geq 6R\}} \V  \phi_n \V \dy =0.
\end{displaymath}
We may apply (ZL) and find a sequence $\psi_n \in L^{\infty}(T_N,\R^d)\cap \ker \A$ such that \begin{displaymath}
\norm  \phi_n -  \psi_n \norm_{L^1(T_N,\R^d)} \longrightarrow 0 \quad \text{as } n \to \infty
\end{displaymath}
and \begin{displaymath}
\norm \psi_n \norm _{L^{\infty}(T_N,\R^d)} \leq C R.
\end{displaymath}
Hence, $x \in K^{\A app}$.
 \end{proof}

\begin{rem} \label{remLp} Theorem \ref{ZLappl} shows that for all $1 \leq p < \infty$  
 \begin{displaymath}
 K^{\A app} = K^{\A qc}_{\infty} = \left\{x \in \R^d \colon \QA (\dist(\cdot,K)^p) (x) = 0\right\}
  = K^{\A qc}_{p}.
 \end{displaymath}
 This follows directly, as all the sets $K_p^{\A qc}$ are nested and, conversely, all the hulls of the distance functions are admissible $f$ in the definition of $K^{\A qc}_{\infty}$.
 \end{rem}
\begin{rem} Such a kind of theorem is not true for general unbounded closed sets $K$. As a counterexample one may consider $\A = \curl$ (i.e. usual quasiconvexity) and look at the set of conformal matrices $K= \{\lambda Q : \lambda \in \R^+, Q \in SO(n) \} \subset \R^{n \times n}$. If $n \geq 2$ is even(\cite{MSY}, there exists a quasiconvex function $F \colon  \R^{n \times n} \to \R$ with $F(x) = 0 \Leftrightarrow x \in K$ and \begin{displaymath}
0 \leq F(A) \leq C(1+ \V A \V^{n/2}). 
\end{displaymath}
On the other hand, let $n \geq 4$ be even and $F \colon \R^{n \times n} \to \R$ be a rank-one convex function with $F_{\V K} =0$ and for some $p < n/2$
\begin{displaymath}
0 \leq F(A) \leq C(1+ \V A \V^{p}).
\end{displaymath}
 Then $F=0$ by \cite{Yan}.

A reason for the nice behaviour of compact sets is that for such sets all distance functions are coercive, i.e. \begin{displaymath}
\dist(v,K)^p \geq \V v \V^p - C,
\end{displaymath}
which is obviously not true for non-compact sets. Coercivity of a function is often needed for relaxation results (c.f \cite{BFL}).
\end{rem}

\subsection{$\A$-$\infty$ Young measures}
We consider $\M(\R^d)$ the set of signed Radon measures with finite mass. Note that this is the dual space of $C_c(\R^d)$ with the dual pairing \begin{displaymath}
\li \mu ,f  \re = \int_{\R^d} f(y)~ \textup{d}\mu(y).
\end{displaymath}
For a measurable set $E \subset \R^N$ we call $\mu: E \to \M(\R^d)$ weak$*$ measurable if the map \begin{displaymath}
x \longmapsto  \li \mu(x),f \re 
\end{displaymath}
is measurable for all $f \in C_c(\R^d)$. Later, we may consider the space $L^{\infty}_w (E,\M(\R^d))$, which is the space of all weakly measurable maps such that $\spt \mu_x \subset B(0,R)$ for some $R>0$ and for a.e. $x \in T_N$. This space is equipped with the topology $\nu^n \weakstar \nu$ iff $\forall f \in C_0(\R^d)$ \begin{displaymath}
\li \nu^n_x , f \re \weakstar \li \nu_x , f \re \text{ in } L^{\infty}(E).
\end{displaymath}
\begin{rem}  \label{metrizable} The topology of $L^{\infty}_w(E,\M(\R^d))$ is metrisable on bounded sets: Note that $\nu_n$ supported on $B(0,R)$ converges to $\nu$  if and only if for all $f \in C(\bar{B}(0,R))$ and all $g \in L^1(E)$ \begin{displaymath}
\int_E \li \nu^n_x ,f \re g(x) \dx \longrightarrow \int_E \li \nu_x , f \re g(x) \dx.
\end{displaymath}
If $\nu^n$ is bounded, then this equation holds for all $f,g$ if and only if it holds for dense subsets of $C(\bar{B}(0,R))$ and $L^1(E)$. As these spaces are separable, we may consider a countable dense subset $(f_k,g_k)_{k \in \N}$ of $C(\bar{B}(0,R))\times L^1(E)$ and the metric \begin{displaymath}
d_k (\nu, \mu) = \left \V \int_E \li \nu_x - \mu_x ,f_k \re g_k(x) \dx \right \V,
\end{displaymath}
and then define the metric \begin{displaymath}
d(\nu, \mu) = \sum_{k \in \N} 2^{-k} \frac{d_k(\nu,\mu)}{1+ d_k(\nu,\mu)}.
\end{displaymath}
\end{rem}

 Let us now recall the Fundamental Theorem of Young measures(c.f. \cite{Ball}, \cite{compcomp}).

\begin{prop} [Fundamental Theorem of Young measures]  \label{FTOYM}
Let $E \subset \R^N$ be a measurable set of finite measure and $u_j: E \to \R^d$ a sequence of measurable functions. There exists a subsequence $u_{j_k}$ and a weak$*$ measurable map $\nu: E \to \M(\R^d)$ such that the following properties hold: \begin{enumerate} [i)]
\item $\nu_x \geq 0$ and $\norm \nu_x \norm_{\M(\R^d)} = \int_{\R^d} 1 ~\textup{d} \mu(x) \leq 1$;
\item $\forall f \in C_0(\R^d)$ define $\bar{f}(x) = \li \nu_x,f \re$.Then 
$f(u_{j_k}) \weakstar \bar{f} \text{ in } L^{\infty}(E)$;
\item If $K \subset \R^d$ is compact, then $\spt \nu_x \subset K$ if $\dist(u_{j_k},K) \to 0$ in measure;
\item  It holds \begin{equation} \label{1prime}
		\norm \nu_x \norm_{\M(\R^d)} = 1 \text{ for a.e. } x \in E \end{equation}
if and only if \begin{displaymath}
\lim_{M \to \infty} \sup_{k \in \N} \left \V \{ \V u_{j_k} \V \geq M\} \right \V =0;
\end{displaymath}
\item If \eqref{1prime} holds, then for all $A \subset E$ measurable and for all $f \in C(\R^d)$ such that $f(u_{j_k})$ is relatively weakly compact in $L^1(A)$, also \begin{displaymath}
f(u_{j_k}) \weakto \bar{f} \text{ in } L^1(A);
\end{displaymath}
\item If \eqref{1prime} holds, then (iii) holds with equivalence.
\end{enumerate}
\end{prop}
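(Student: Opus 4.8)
The plan is to derive the entire statement from one weak$*$ sequential compactness argument, in the classical fashion (following the cited references). First I would pass to the one-point compactification $\overline{\R^d}=\R^d\cup\{\infty\}$, so that $C(\overline{\R^d})$ is a separable Banach space containing $C_0(\R^d)$ as the functions vanishing at $\infty$. The Bochner space $L^1(E,C(\overline{\R^d}))$ is separable, and its dual is the space of weak$*$ measurable maps $\mu:E\to\M(\overline{\R^d})$ of essentially bounded mass, paired via $\li\mu,\Phi\re=\int_E\li\mu_x,\Phi(x,\cdot)\re\dx$. Each $x\mapsto\delta_{u_j(x)}$ lies in the unit ball of this dual; separability of the predual makes the ball weak$*$ metrizable and hence sequentially compact, so along a subsequence $u_{j_k}$ one has $\delta_{u_{j_k}}\weakstar\bar\nu$. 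Testing against $g(x)h(y)$ with $0\le g\in L^1(E)$ and $h\in C(\overline{\R^d})$ shows $\bar\nu_x\ge0$ and $\bar\nu_x(\overline{\R^d})=1$ for a.e. $x$, and I then define $\nu_x$ to be the restriction of $\bar\nu_x$ to $\R^d$; this is the claimed Young measure.

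Properties (i)--(iii) are then formal. (i) is just the restriction step. For (ii), given $f\in C_0(\R^d)$, extend it to $\bar f\in C(\overline{\R^d})$ by $\bar f(\infty)=0$; testing the weak$*$ convergence against $g(x)\bar f(y)$ with $g\in L^1(E)$ gives $\int_E g\,f(u_{j_k})\dx\to\int_E g\,\li\nu_x,f\re\dx$, which is precisely $f(u_{j_k})\weakstar\li\nu_\cdot,f\re$ in $L^\infty(E)$. For (iii), if $\dist(u_{j_k},K)\to0$ in measure and $0\le f\in C_0(\R^d)$ vanishes on a neighbourhood of the compact $K$, then $f(u_{j_k})\to0$ in measure and, being bounded, $f(u_{j_k})\weakstar0$, so $\li\nu_x,f\re=0$ a.e.; running over a countable family of such $f$ whose supports exhaust $\R^d\setminus K$ yields $\spt\nu_x\subset K$ a.e.

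The only genuine difficulty is keeping track of the mass that can escape to $\infty$, which is exactly the content of (iv)--(vi), governed by the identity $\norm\nu_x\norm_{\M(\R^d)}=1-\bar\nu_x(\{\infty\})$. For (iv) I would use cutoffs $g_M\in C(\overline{\R^d})$ with $0\le g_M\le1$, $g_M\equiv1$ on $\{|y|\ge M+1\}\cup\{\infty\}$ and $g_M\equiv0$ on $\{|y|\le M\}$, so $\indicator_{\{|u_{j_k}|\ge M+1\}}\le g_M(u_{j_k})\le\indicator_{\{|u_{j_k}|>M\}}$; testing the convergence against $g_M$ gives $\int_E g_M(u_{j_k})\dx\to\int_E\li\bar\nu_x,g_M\re\dx$, and $g_M\downarrow\indicator_{\{\infty\}}$ forces, by dominated convergence, $\int_E\li\bar\nu_x,g_M\re\dx\to\int_E\bar\nu_x(\{\infty\})\dx$. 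Chaining the two pointwise inequalities with this limit gives both implications of (iv); the one subtlety is that to get a supremum rather than a limit superior over $k$ in the tightness condition, one absorbs the finitely many initial indices separately, each $u_{j_k}$ being a fixed function on a finite-measure set. For (v), Dunford--Pettis converts relative weak $L^1(A)$-compactness of $\{f(u_{j_k})\}$ into uniform integrability; truncating $f$ to $f_M=f\chi_M\in C_c(\R^d)$, part (ii) gives $f_M(u_{j_k})\weakto\li\nu_\cdot,f_M\re$ in $L^1(A)$, and uniform integrability together with the tightness from (iv) bounds $\int_A|f(u_{j_k})-f_M(u_{j_k})|\dx$ uniformly in $k$ while $M\to\infty$, simultaneously forcing $\li\nu_x,f_M\re\to\li\nu_x,f\re$ in $L^1(A)$.

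Finally, (vi): the implication not already contained in (iii) is that $\spt\nu_x\subset K$ a.e. forces $\dist(u_{j_k},K)\to0$ in measure. Applying (v) to $\phi:=\min(\dist(\cdot,K),1)$, which is legitimate once (iv) holds because $\{\phi(u_{j_k})\}$ is bounded, hence uniformly integrable on the finite-measure set $E$, one gets $\phi(u_{j_k})\weakto\li\nu_\cdot,\phi\re=0$; since $\phi\ge0$ this is convergence in $L^1(E)$, hence in measure, and the inclusion $\{\dist(u_{j_k},K)\ge\varepsilon\}\subset\{\phi(u_{j_k})\ge\min(\varepsilon,1)\}$ finishes it. The main obstacle I anticipate is precisely this passage between $\overline{\R^d}$ and $\R^d$ in (iv)--(vi) --- controlling $\bar\nu_x(\{\infty\})$ and the uniform integrability of tails --- whereas (i)--(iii) come out automatically from weak$*$ compactness.
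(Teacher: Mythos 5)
The paper does not actually prove Proposition~\ref{FTOYM}; it is stated as a known result with the citations \cite{Ball} and \cite{compcomp}, so there is no proof in the paper to compare against. Your sketch is a correct reproduction of the classical argument from those sources---one-point compactification $\overline{\R^d}$, Banach--Alaoglu on the dual of the separable Bochner space $L^1(E,C(\overline{\R^d}))$, restriction to $\R^d$, and bookkeeping of the escaped mass $\bar\nu_x(\{\infty\})$ via the cutoffs $g_M$ for parts (iv)--(vi)---and the only steps requiring fleshing out are the ones you already flag as routine, most notably the integrability $\li\nu_\cdot,\V f\V\re\in L^1(A)$ needed in (v) (which follows from Fatou together with the uniform integrability given by Dunford--Pettis) and the standard identification of $(L^1(E,C(\overline{\R^d})))^{\ast}$ with the weak$*$ measurable maps into $\M(\overline{\R^d})$.
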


We call such a map $\nu: E \to \M(\R^d)$ the Young measure generated by the sequence $u_{j_k}$. One may show that every weak$*$ measureable map $E \to \M(\R^d)$ satisfying (i) is generated by some sequence $u_{j_k}$. 
\begin{rem}
If $u_k$ generates a Young measure $\nu$ and $v_k \to 0$ in measure (in particular, if $v_k \to 0$ in $L^1$), then the sequence $(u_k + v_k)$ still generates $\nu$.

If $u:T_N \to \R^d$ is a function, we may consider the oscillating sequence $u_n(x) := u(nx)$. This sequence generates the homogeneous (i.e. $\nu_x = \nu$ a.e.) Young measure $\nu$ defined by \begin{displaymath}
\li \nu , f \re = \int_{T_N} f(u_n(y)) \dy.
\end{displaymath}
\end{rem}
\begin{quest}
What happens if we impose further conditions on the sequence $u_{j_k}$, for instance $\A u_{j_k} =0$?
\end{quest}

For $1 \leq p < \infty$ we call a sequence $v_j \in L^p(\Omega,\R^d)$ $p$-equi-integrable if \begin{displaymath}
\lim \limits_{\varepsilon \to 0} \sup_{j \in \N} \sup_{E \subset \Omega \colon \V E \V < \varepsilon} \int_E \V v_j(y) \V ^p \dy = 0.
\end{displaymath}

\begin{defi} Let $1 \leq p \leq \infty$. We call a map $\nu \colon \Omega \to \R^d$ an $\A$-$p$-Young measure if there exists a $p$-equi-integrable sequence $\{v_j\} \subset L^p(\Omega, \R^d)$ (for $p = \infty$ a bounded sequence), such that $v_j$ generates $\nu$ and satisfies $\A v_j =0$.
\end{defi}

For $1 \leq p < \infty$ the set of $ \A $-$p$ Young measures was classified by \textsc{Fonseca} and \textsc{Müller} in \cite{FM} and for the special case $\A=\curl$ already in \cite{Pedregal}. 

\begin{prop}
Let $1 \leq p < \infty$. $\nu: T_N \to \M(\R^d)$ is an $\A$-$p$-Young measure if and only if \begin{enumerate} [i)]
\item $\exists v \in L^p(T_N,\R^d)$ such that $\A v =0$ and \begin{displaymath}
v(x) = \li \nu_x , \id \re = \int_{\R^d} y ~\textup{d}\nu_x(y) \text{ for a.e. } x \in T_N;
\end{displaymath}
\item $\int_{T_N} \int_{\R^d} \V z \V ^p ~\textup{d} \nu_x(z) \dx < \infty$;
\item for a.e. $x \in T_N$ and all continuous $g$ with $\V g(v) \V \leq C(1 + \V v \V^p)$ we have \begin{displaymath}
\li \nu_x, g \re \geq \QA g(\li \nu_x, \id \re).
\end{displaymath}
\end{enumerate}
\end{prop}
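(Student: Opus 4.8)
The plan is to establish the two implications separately, following closely the scheme of \textsc{Kinderlehrer}--\textsc{Pedregal} \cite{Kinderlehrer} for gradients and of \textsc{Fonseca}--\textsc{Müller} \cite{FM} in the constant-rank setting; the adaptations are routine, so I would carry out the details in the Appendix and only record the structure here. Throughout I use that $\A$ has constant rank with potential $\B$, and that $\nu$ is a genuine Young measure, i.e.\ $\nu_x \geq 0$ a.e.

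For \emph{necessity}, let $v_j \in L^p(T_N,\R^d)$ be $p$-equi-integrable with $\A v_j = 0$ and generating $\nu$. Since $T_N$ has finite measure, $p$-equi-integrability forces $\sup_j \norm v_j \norm_{L^p} < \infty$; hence, along a subsequence, $v_j \weakto v$ in $L^p$, and continuity of $\A$ as a map $L^p \to W^{-1,p}$ gives $\A v = 0$. Because $p \geq 1$, both $\V v_j \V$ and $\V v_j \V^p$ are equi-integrable, hence relatively weakly compact in $L^1$; applying Proposition \ref{FTOYM}(v) to $\id$ and to $\V\cdot\V^p$ identifies $v = \li \nu_\cdot, \id \re$ a.e.\ (property (i)) and gives $\int_{T_N} \li \nu_x, \V\cdot\V^p \re \dx < \infty$ (property (ii)). For (iii) I would fix a continuous $g$ with $\V g(z) \V \leq C(1+\V z\V^p)$, note that $g(v_j)$ is then equi-integrable so that $g(v_j) \weakto \li \nu_\cdot, g \re$ in $L^1(T_N)$, and apply the (localised) lower semicontinuity theorem of \cite{FM} to the $\A$-quasiconvex function $\QA g$ --- truncating $g$ from below first so that the growth hypotheses are met, then passing to the limit --- to get, for every measurable $A \subset T_N$,
\begin{displaymath}
\int_A \QA g(v(x)) \dx \leq \liminf_{j \to \infty} \int_A \QA g(v_j(x)) \dx \leq \liminf_{j \to \infty} \int_A g(v_j(x)) \dx = \int_A \li \nu_x, g \re \dx .
\end{displaymath}
Letting $A$ shrink to a Lebesgue point then yields $\QA g(\li \nu_x, \id \re) \leq \li \nu_x, g \re$ for a.e.\ $x$, the points where $\QA g = -\infty$ being harmless.

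For \emph{sufficiency}, assuming (i)--(iii), I would first reduce to homogeneous Young measures: pick a countable family of $\A$-quasiconvex functions of $p$-growth encoding all inequalities in (iii), and use Lebesgue points of $x \mapsto \nu_x$ together with a dyadic cube decomposition of $T_N$ to replace $\nu$ on each small cube by the homogeneous measure obtained by freezing $\nu_x$. This reduces matters to the following: given a probability measure $\mu \in \M(\R^d)$ with finite $p$-moment whose barycentre $\bar z = \li \mu, \id \re$ satisfies $\li \mu, g \re \geq \QA g(\bar z)$ for all $\A$-quasiconvex $g$ of $p$-growth, build a $p$-equi-integrable sequence $\B\phi_n$ generating $\mu$. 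As in \cite{Kinderlehrer}, the Jensen inequalities are exactly what is needed --- via Hahn--Banach duality with the operator $\QA$ --- to exhibit $\bar z$ as reachable by oscillations; one then constructs $\phi_n$ by iterating elementary oscillations of the form $x \mapsto h(x\cdot\xi)\lambda$ with $\lambda \in \ker \Aop(\xi)$, and restores $p$-equi-integrability using (ii) and a decomposition lemma (for $p=1$ this last step can be supplied by the $L^\infty$-truncation of Theorem \ref{main}). Gluing the homogeneous pieces over the cube decomposition and running a diagonal argument in the metrisable topology of Remark \ref{metrizable} produces the $\A$-free, $p$-equi-integrable sequence generating $\nu$.

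The step I expect to be the main obstacle is the sufficiency direction --- converting the pointwise Jensen inequalities into a genuine $\A$-free, $p$-equi-integrable generating sequence. The delicate points are: (a) a measurable selection so that the per-cube constructions depend measurably on $x$; (b) controlling $\QA g$, which under mere $p$-growth of $g$ is in general neither bounded below nor lower semicontinuous, so some truncation/approximation of $g$ is unavoidable; and (c) the equi-integrability bookkeeping, where the $L^\infty$-truncation of Section \ref{Secmain} genuinely enters for $p = 1$, a case not reached by the Fourier-analytic approach of \cite{CMO2}. Necessity, by contrast, should follow fairly directly from Proposition \ref{FTOYM} and the Fonseca--Müller semicontinuity theorem.
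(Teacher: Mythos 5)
The paper does not prove this proposition --- it is recalled from the literature. The sentence immediately preceding it reads ``For $1 \leq p < \infty$ the set of $\A$-$p$ Young measures was classified by Fonseca and M\"uller in \cite{FM} and for the special case $\A=\curl$ already in \cite{Pedregal}'', and the Appendix supplies proofs only for the $p=\infty$ statements, Theorems \ref{hommain} and \ref{mainYM}. So there is no in-paper argument to compare your attempt against; the expected content at this point in the paper is a citation, not a proof.

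Read as a reconstruction of \cite{FM}, your sketch is broadly sound. The necessity half is essentially right: $p$-equi-integrability gives $L^p$-boundedness, weak compactness and Proposition \ref{FTOYM}(v) yield (i) and (ii), and the Jensen inequality (iii) follows by localising the Fonseca--M\"uller lower semicontinuity theorem, with the truncation-from-below trick handling the possible bad behaviour of $\QA g$. Your sufficiency outline (freezing $\nu_x$ on small cubes, a Hahn--Banach argument for the homogeneous case, elementary $\A$-free oscillations, gluing and diagonalising) is also the right shape, and your flagged obstacles (a)--(c) are the genuine ones. One point worth correcting: for $p < \infty$ the cited source does not use $L^\infty$-truncation or the property (ZL) --- it builds $p$-equi-integrable $\A$-free generating sequences via Fourier-multiplier projection onto $\ker\A$ together with a decomposition lemma --- so invoking Theorem \ref{main} at $p=1$ is a departure from \cite{FM}, and if that step were genuinely necessary, the paper's attribution of the full range $1\leq p<\infty$ to \cite{FM} would be in question. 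The truncation machinery of Sections \ref{Secmain} and \ref{secappl} enters the Young-measure story only at $p=\infty$.
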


Recently, there has also been progress for so-called generalized Young measures ($p=1$ is a special case), c.f. \cite{RindlerKris,KirchKris,RindlerDP}.

Let us recall the result of \textsc{Kinderlehrer} and \textsc{Pedregal} for $W^{1,\infty}$-Gradient Young measures (c.f. \cite{Kinderlehrer}). 
\begin{prop}
A weak$*$ measurable map $\nu : \Omega \to \M (\R^{N \times m})$ is a $\curl$-$\infty$-Young measure if and only if $\nu_x \geq 0$ a.e. and there exists $K \subset \R^{N \times m}$ compact, $v \in L^{\infty}(\Omega,\R^{N \times m})$ such that \begin{enumerate} [a)]
\item $\spt \nu_x \subset K$ for a.e. $x \in \Omega$;
\item $ < \nu_x, \id> = v$ for a.e. $x \in \Omega$;
\item for a.e. $x \in \Omega$ and all continuous $g \colon \R^{N \times m} \to \R$ we have \begin{displaymath}
\li \nu_x, g \re \geq \mathcal{Q}_{\curl} g(\li \nu_x, \id \re).
\end{displaymath}
\end{enumerate}
\end{prop}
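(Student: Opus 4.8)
The plan is to deduce the statement from the classification of $\curl$-$p$-Young measures (the \textsc{Fonseca}--\textsc{Müller} result recalled above, \cite{FM}) together with the property (ZL), which holds for $\A=\curl$ by \textsc{Zhang} \cite{Zhang}; by a routine localisation and covering argument one may assume $\Omega=T_N$.

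For \emph{necessity}, let $\nu$ be generated by $v_j\in L^\infty$ with $\curl v_j=0$ and $M:=\sup_j\norm v_j \norm_{L^\infty}<\infty$. Then $\V\{\V v_j\V\ge M+1\}\V=0$, so Proposition \ref{FTOYM} yields $\nu_x\ge0$, $\norm \nu_x \norm_{\M}=1$ and $\spt\nu_x\subset K:=\overline{B(0,M)}$ a.e.; passing to a subsequence $v_j\weakstar v$, and since $\curl$ is weak$*$-continuous and $\id$ is bounded on $K$ we get $\curl v=0$ and $v=\li\nu_x,\id\re$, i.e.\ a) and b). For c), fix continuous $g$ (w.l.o.g.\ $g\ge0$): $\mathcal{Q}_{\curl}g$ is $\curl$-quasiconvex, so $w\mapsto\int_A\mathcal{Q}_{\curl}g(w)\,\dx$ is weak$*$ lower semicontinuous by \cite{FM}, giving $\int_A\mathcal{Q}_{\curl}g(v)\,\dx\le\liminf_j\int_A g(v_j)\,\dx=\int_A\li\nu_x,g\re\,\dx$ for every measurable $A$; shrinking $A$ to Lebesgue points yields c).

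For \emph{sufficiency}, assume a), b), c). By b) the barycenter $v=\li\nu_x,\id\re$ lies in $L^\infty\cap\ker\curl$, by a) $\spt\nu_x$ is uniformly bounded so $\int_{T_N}\int\V z\V\,\dnu_x(z)\,\dx<\infty$, and c) covers all continuous $g$ of linear growth; hence the $\curl$-$1$-classification produces a $1$-equi-integrable sequence $v_j\in L^1(T_N,\R^{N\times m})\cap\ker\curl$ generating $\nu$. This sequence need not be bounded, so we truncate: fixing $R$ with $K\subset B(0,R)$, since $\nu$ is a probability measure supported in $K$, Proposition \ref{FTOYM} gives $\dist(v_j,K)\to0$ in measure, so $\V\{\V v_j\V>2R\}\V\to0$, and $1$-equi-integrability then forces $\int_{\{\V v_j\V>2R\}}\V v_j\V\,\dy\to0$. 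Applying (ZL) for $\curl$ yields $w_j\in L^\infty(T_N,\R^{N\times m})\cap\ker\curl$ with $\norm w_j \norm_{L^\infty}\le CR$ and $\norm w_j-v_j \norm_{L^1}\to0$; an $L^1$-convergent perturbation does not change the generated Young measure, so $w_j$ still generates $\nu$, and being bounded and curl-free it exhibits $\nu$ as a $\curl$-$\infty$-Young measure.

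The main obstacle is the sufficiency direction: the $L^p$-theory only supplies a curl-free generating sequence that is $p$-equi-integrable, not uniformly bounded, and a pointwise truncation would destroy the constraint $\curl w_j=0$. Property (ZL)/Zhang's theorem is precisely the tool that provides a \emph{constraint-preserving} $L^\infty$-truncation, and the quantitative input that activates it — the vanishing of the $L^1$-mass of $v_j$ over the superlevel sets $\{\V v_j\V>L\}$ — is exactly what the compactness of $\spt\nu_x$ delivers. The analogous classification of $\A$-$\infty$-Young measures for any $\A$ satisfying (ZL) follows verbatim, replacing $\curl$ by $\A$, $\mathcal{Q}_{\curl}$ by $\QA$, and c) by its restriction to $\A$-quasiconvex test functions together with the constraint $\li\nu_x,\id\re\in\ker\A$.
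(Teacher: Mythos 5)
Your proof is correct, but for the sufficiency direction you take a genuinely different route from the paper. The paper builds the $\infty$-classification from scratch: it first proves the homogeneous case (Theorem \ref{hommain}) by a Hahn--Banach separation argument, relying on Lemma \ref{prop1}, Lemma \ref{average}, and the weak$*$-closedness and convexity of $H_{\A}^x(K)$, and then obtains the general case (Theorem \ref{mainYM}) by subdividing $T_N$ into small cubes, showing each cube-average $\nu^n_i$ is a homogeneous $\A$-$\infty$-Young measure, patching compactly supported generators together, and finally invoking (ZL). You instead feed a), b), c) into the Fonseca--M\"uller $\A$-$p$-classification to manufacture a $p$-equi-integrable curl-free generating sequence, and then invoke (ZL) exactly once, at the end, to truncate it into $L^{\infty}$. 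Both proofs place (ZL) at the crux of sufficiency, and your necessity argument is essentially the same as the paper's (Fundamental Theorem of Young measures, weak$*$ lower semicontinuity from \cite{FM}, shrinking to Lebesgue points). Your route is shorter and conceptually transparent, but it treats the $L^p$-classification as a black box; the paper's route is longer but self-contained, which is important because the paper re-uses the argument verbatim for arbitrary $\A$ satisfying (ZL). One small caveat worth noting: the $p=1$ case of the Fonseca--M\"uller classification is more delicate than $1<p<\infty$ (the Fourier projection onto $\ker\A$ is not $L^1$-bounded), so it is safer to take $p=2$ in your reduction --- compact support of $\nu_x$ supplies the $p$-th moment condition for every $p$, and $2$-equi-integrability on a bounded domain implies $1$-equi-integrability, which is all the (ZL) truncation step requires.
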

It is possible to state such a theorem in the general setting that $\A$ satisfies (ZL). The proofs in \cite{Kinderlehrer} mostly rely on this fact and the general case of operators satisfying (ZL) can be treated in the same fashion with few modifications. The details of the proofs can be found in the Appendix.

Let us first state the classification theorem for so called homogeneous $\A$-$\infty$-Young measures, i.e. $\A$-$\infty$-Young measures $ \nu \colon T_N \to \M(\R^d)$ with the following properties \begin{enumerate} [i)]
\item $\spt {\nu_x} \subset K$ for a.e. $x \in T_N$ where $K \subset \R^d$ is compact;
\item $\nu$ is a homogeneous Young measure, i.e. there exists $\nu_0 \in \M(\R^d)$ such that $\nu_x= \nu_0$ for a.e. $x \in T_N$.
\end{enumerate}

Define the set $\M^{\A qc} (K)$ by (c.f. \cite{Sverak2})
\begin{equation} \label{Maqc}
\M^{\A qc}(K) =\left\{ \nu \in \M(\R^d) \colon \nu \geq 0,~\spt \nu \subset K,~\li \nu,f \re \geq f( \li \nu, \id \re)~\forall f \colon \R^d \to \R \text{ } \A \text{-qc}\right\}.
\end{equation}
Denote by $H_{\A}(K)$ the set of homogeneous $\A$-$\infty$-Young measures supported on $K$. We are now able to formulate the classification of these measures (c.f.\cite[Theorem 5.1.]{Kinderlehrer}).
 \begin{thm}[Characterisation of homogeneous $\A$-$\infty$-Young measures] \label{hommain}
Let $\A$ satisfy the property (ZL) and $K$ be a compact set. Then \begin{displaymath}
H_{\A}(K) = \M ^{\A qc} (K).
\end{displaymath}
\end{thm}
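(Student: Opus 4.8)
The plan is to prove the two inclusions $H_{\A}(K) \subseteq \M^{\A qc}(K)$ and $\M^{\A qc}(K) \subseteq H_{\A}(K)$ separately; only the second uses property (ZL). For the first, let $\nu \in H_{\A}(K)$ be generated by a bounded sequence $v_j \in L^{\infty}(T_N,\R^d) \cap \ker \A$. By the Fundamental Theorem of Young measures $\nu_x \geq 0$, and $\spt \nu_x \subset K$ holds by hypothesis. Since $\nu$ is homogeneous and $v_j$ is bounded, extending $\id$ and $f$ to compactly supported continuous functions on a large ball shows $c_j := \int_{T_N} v_j \to \bar\nu := \li \nu,\id\re$ and $\int_{T_N} f(v_j) \to \li \nu, f\re$ for every $f \in C(\R^d)$. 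If $f$ is $\A$-quasiconvex, then applying the definition of $\A$-quasiconvexity to the smooth mean-zero test functions $\rho_{\varepsilon} \ast v_j - c_j \in \ker \A$ (with $\rho_\varepsilon$ a standard periodic mollifier) and letting $\varepsilon \to 0$ by dominated convergence gives $f(c_j) \leq \int_{T_N} f(v_j)$; letting $j \to \infty$ yields $f(\bar\nu) \leq \li \nu, f\re$. This is exactly the Jensen inequality in the definition of $\M^{\A qc}(K)$, so $\nu \in \M^{\A qc}(K)$. (Alternatively one may quote the Fonseca--M\"uller weak-$*$ lower semicontinuity theorem directly.)

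For the converse inclusion, fix $\nu \in \M^{\A qc}(K)$ and set $\bar\nu = \li \nu,\id\re$. Testing the Jensen inequality against the constant functions $\pm 1$, which are $\A$-quasiconvex, shows that $\nu$ is a probability measure. The first step is to identify $\nu$ with an $\A$-$1$-Young measure by checking the conditions of the Fonseca--M\"uller characterisation recalled above: condition~(i) holds with the constant map $v \equiv \bar\nu \in \ker\A$ since $\nu$ is homogeneous; condition~(ii) is immediate because $\spt\nu \subset K$ is compact; and for condition~(iii) note that, since $\QA g$ is $\A$-quasiconvex and $\QA g \leq g$, the Jensen property of $\nu$ gives $\li \nu, g\re \geq \li \nu, \QA g\re \geq \QA g(\bar\nu)$. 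Hence there is a $1$-equi-integrable sequence $v_j \in L^1(T_N,\R^d) \cap \ker \A$ generating the homogeneous Young measure $\nu$.

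It remains to upgrade $\{v_j\}$ to a bounded generating sequence, and this is where (ZL) enters. Pick $R > 0$ with $K \subset B_R(0)$. Since $\nu$ is a probability measure supported in the compact set $K$, the Fundamental Theorem of Young measures gives $\dist(v_j,K) \to 0$ in measure, whence $\V \{ \V v_j\V > R+1\}\V \leq \V\{\dist(v_j,K) > 1\}\V \to 0$; combined with $1$-equi-integrability this implies $\int_{\{\V v_j\V > R+1\}} \V v_j\V \dy \to 0$. Thus the hypothesis of property (ZL) is met with $L = R+1$, and (ZL) produces $w_j \in L^1(T_N,\R^d) \cap \ker \A$ with $\norm w_j\norm_{L^{\infty}} \leq C(R+1)$ and $\norm w_j - v_j\norm_{L^1} \to 0$. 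Since $w_j - v_j \to 0$ in measure, $w_j$ still generates $\nu$ (by the stability remark for Young measures); being bounded in $L^{\infty}$ and satisfying $\A w_j = 0$, it exhibits $\nu$ as a homogeneous $\A$-$\infty$-Young measure supported in $K$, i.e.\ $\nu \in H_{\A}(K)$.

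The main obstacle is this last step: the whole argument reduces to verifying that a generating sequence of the $\A$-$1$-Young measure $\nu$ obeys the quantitative ``almost $L^{\infty}$'' bound required to invoke (ZL), and this in turn relies crucially on the a~priori confinement $\dist(v_j,K)\to 0$ in measure coming from the compactness of $K$. The remaining bookkeeping -- the Fonseca--M\"uller characterisations and, should one prefer to phrase the argument via approximation and a diagonal extraction using metrisability of the Young-measure topology (Remark~\ref{metrizable}), the attendant density arguments -- consists of routine adaptations of Kinderlehrer--Pedregal \cite{Kinderlehrer}, carried out in the Appendix.
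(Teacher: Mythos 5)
Your proof is correct, but it takes a genuinely different route from the paper's. For the nontrivial inclusion $\M^{\A qc}(K) \subseteq H_{\A}(K)$, the paper proceeds via Hahn--Banach separation: it first establishes (through the auxiliary Lemmas in the Appendix, including a delicate gluing construction built on the potential $\B$) that $H_{\A}^0(K)$ is weak$*$ closed and convex, and then rules out separation by a contradiction argument with the penalised functions $f_k = f + k\,\dist^2(\cdot,K)$, appealing to Lemma \ref{average}. You instead invoke the Fonseca--M\"uller characterisation of $\A$-$p$-Young measures (stated in the paper for $1\le p<\infty$) as a black box for $p=1$, observing that the three conditions there are consequences of the defining properties of $\M^{\A qc}(K)$: in particular condition~(iii) follows because $\QA g \le g$ and $\QA g$ is either identically $-\infty$ (in which case the inequality is vacuous) or a real-valued continuous $\A$-quasiconvex function, so the Jensen property of $\nu$ applies to it. This yields a $1$-equi-integrable generating sequence in $\ker\A$, which you then upgrade via (ZL) after combining the Fundamental Theorem of Young measures (giving $\dist(v_j,K)\to 0$ in measure from $\spt\nu\subset K$ compact) with $1$-equi-integrability to get the quantitative ``almost $L^\infty$'' tail estimate. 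Both proofs ultimately need the constant-rank hypothesis (the paper through the potential $\B$, you through the Fonseca--M\"uller theorem), but your argument is shorter and outsources the hard analysis to a known classification; the paper's is more self-contained and builds the functional-analytic apparatus (weak$*$ compactness, convexity of $H_{\A}^x(K)$) that is reused in the proof of the inhomogeneous Theorem \ref{mainYM}. Your handling of the case $\QA g \equiv -\infty$ should be stated explicitly, but this is a standard dichotomy in the constant-rank setting and does not constitute a gap.
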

Using this result, one may prove the Characterisation of $\A$-$\infty$-Young measures (c.f \cite[Theorem 6.1]{Kinderlehrer}).
\begin{thm}[Characterisation of $\A$-$\infty$-Young measures]\label{mainYM}
Suppose that $\A$ satisfies the property (ZL). A weak$*$ measurable map $\nu: T_N \to \M(\R^d)$ is an $\A$-$\infty$-Young measure if and only if $\nu_x \geq 0$ a.e. and there exists $K  \subset \R^d$ compact and $u \in L^{\infty}(T_N,\R^d) \cap \ker \A$ with \begin{enumerate} [i)]
\item $\spt \nu_x \subset K$ for a.e. $x \in T_N$.
\item $ \li \nu_x, \id \re = u$ for a.e. $x \in T_N$,
\item $\li \nu_x, f \re \geq f(\li \nu_x, \id \re)$ for a.e. $x \in T_N$ and  all continuous and $\A$-quasiconvex $f:\R^d \to \R$.
\end{enumerate}
\end{thm}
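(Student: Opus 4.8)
The plan is to deduce Theorem~\ref{mainYM} from the homogeneous classification Theorem~\ref{hommain} by a blow-up and patching argument in the spirit of \cite[\S 6]{Kinderlehrer}, the role of the gradient being played by the potential $\B$ of $\A$ and that of Zhang's truncation by property (ZL) (which already enters the proof of Theorem~\ref{hommain}). For the \emph{necessity}, let $\nu$ be generated by a bounded $\A$-free sequence $v_j$ with $\|v_j\|_{L^\infty}\le R$. Then $\nu_x\ge 0$ and $\spt\nu_x\subset K:=\overline{B(0,R)}$ by Proposition~\ref{FTOYM}, which gives (i); passing to a subsequence $v_j\weakstar u$ in $L^\infty$, and since $\ker\A$ is weak$*$ closed and $\id$ is bounded and continuous on $K$, we get $\A u=0$ and $u(x)=\li\nu_x,\id\re$ a.e., which is (ii). For (iii), fix a continuous $\A$-quasiconvex $f$ and a point $x_0$ that is a Lebesgue point of both $u$ and $x\mapsto\li\nu_x,f\re$ and at which $x\mapsto\nu_x$ is approximately continuous; rescaling $v_j$ around $x_0$ produces, after a diagonal extraction, a homogeneous $\A$-$\infty$-Young measure supported in $K$ with constant barycenter $u(x_0)$, equal to $\nu_{x_0}$ (here one uses that $\A$ is homogeneous of first order, so rescalings stay in $\ker\A$, and that a homogeneous Young measure on a ball may be regarded as one on $T_N$). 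Theorem~\ref{hommain} and the definition \eqref{Maqc} of $\M^{\A qc}(K)$ then give $\li\nu_{x_0},f\re\ge f(u(x_0))$, i.e.\ (iii).

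For the \emph{sufficiency}, assume $\nu$ satisfies (i)--(iii). Since the set of $\A$-$\infty$-Young measures with support in a fixed ball is closed under $d$-convergence — this follows from Remark~\ref{metrizable} together with a diagonal extraction from the generating sequences — it suffices to produce, for every $\varepsilon>0$, an $\A$-$\infty$-Young measure $\mu^\varepsilon$ with $d(\mu^\varepsilon,\nu)<\varepsilon$. To construct $\mu^\varepsilon$, mollify $u$ at a fixed small scale to obtain $u_\varepsilon\in C^\infty(T_N,\R^d)\cap\ker\A$ with $\|u_\varepsilon\|_{L^\infty}\le R$ and $u_\varepsilon\to u$ in $L^1$ and a.e.; by (iii) and Theorem~\ref{hommain}, $\nu_x\in\M^{\A qc}(K)=H_{\A}(K)$ for a.e.\ $x$. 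Using a Scorza--Dragoni/Lusin partition, cover most of $T_N$ by small cubes $Q_i$ (of side much smaller than the mollification scale) on which both $u_\varepsilon$ and $x\mapsto\nu_x$ are almost constant, say $u_\varepsilon\approx a_i$ and $\nu_x\approx\nu_{x_i}$.

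On each $Q_i$ we realize, by a rescaled and localized oscillation, the homogeneous Young measure obtained from $\nu_{x_i}$ by translating its barycenter from $u(x_i)$ to $a_i$; this translate still lies in some $\M^{\A qc}(K')$ and is still a homogeneous $\A$-$\infty$-Young measure, because $f\mapsto f(\cdot+c)$ preserves $\A$-quasiconvexity. Concretely, such a homogeneous generating sequence has the form $a_i+\B\phi^i_j$ with $\int_{T_N}\B\phi^i_j=0$, and after cutting off $\phi^i_j$ so that it is supported in $Q_i$ — at the price of an error that can be made arbitrarily small — the competitor $v^\varepsilon_j:=u_\varepsilon+\sum_i\B\phi^i_j$ satisfies $\A v^\varepsilon_j=\A u_\varepsilon+\sum_i\A\B\phi^i_j=0$ globally, is bounded by $CR$, and generates on each $Q_i$ a Young measure close to $\nu_x$. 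Hence the Young measure $\mu^\varepsilon$ generated by $(v^\varepsilon_j)_j$ satisfies $d(\mu^\varepsilon,\nu)\to 0$ as $\varepsilon\to0$, and a final diagonal extraction yields a single bounded $\A$-free sequence generating $\nu$; property (ZL) may additionally be invoked to truncate this diagonal sequence back to the uniform bound $CR$ without altering the generated measure.

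The main difficulty is the patching step: combining the local oscillations into a \emph{globally} $\A$-free sequence while respecting the rigid barycenter constraint $u\in\ker\A$ and reproducing the prescribed, generally inhomogeneous, profile $x\mapsto\nu_x$. This hinges on two points that need the full structure at hand: that homogeneous $\A$-$\infty$-Young measures can be generated by $\B$-images of \emph{compactly supported} potentials up to arbitrarily small error (a cut-off argument in which the lower-order terms produced by differentiating the cut-off must be controlled, since $\B$ need not be of first order), and the bookkeeping of the two length scales (mollification scale versus cube size) and of the small translation errors $a_i-u(x_i)$ in the estimate of $d(\mu^\varepsilon,\nu)$. Everything else is routine use of the Fundamental Theorem of Young measures (Proposition~\ref{FTOYM}) and of the metrizability in Remark~\ref{metrizable}.
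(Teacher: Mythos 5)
Your overall strategy — necessity via the Fundamental Theorem of Young measures plus a localization argument, sufficiency by approximating $\nu$ with piecewise-constant profiles realized by rescaled, compactly supported homogeneous oscillations patched together through the potential $\B$ and truncated by (ZL) — matches the paper's (and Kinderlehrer--Pedregal's) architecture, and the difficulties you flag (the cut-off of the potential when $\B$ has order $>1$, and the interplay of length scales) are exactly the genuine technical points. There are, however, two implementation differences worth recording. First, for necessity (iii) you use a blow-up at Lebesgue/approximate-continuity points to produce a homogeneous $\A$-$\infty$-Young measure equal to $\nu_{x_0}$ and then invoke Theorem~\ref{hommain}; the paper instead quotes the weak$*$ lower semicontinuity of \cite{FM} directly and localizes it by Lebesgue differentiation, which avoids having to justify that a blow-up on a ball gives a homogeneous Young measure on $T_N$ (a small extra lemma your route needs). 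Second, and more substantially, for sufficiency the paper \emph{first} translates $\nu$ to zero barycenter (replacing $\nu_x$ by its pushforward under $z\mapsto z-u(x)$), then averages over dyadic cubes: the averaged measures $\nu^n_i=\fint_{Q^n_i}\nu_x\,\textup{d}x$ then have barycenter $0$, so the Jensen inequality $\li\nu^n_i,f\re\ge f(0)=f(\li\nu^n_i,\id\re)$ follows immediately from (iii), and $\nu^n\weakstar\nu$ comes for free from the Lebesgue differentiation theorem, with no Scorza--Dragoni or mollification of $u$ required. You instead keep the variable barycenter, mollify $u$, sample $\nu_{x_i}$ pointwise, and translate by $a_i-u(x_i)$ on each cube; this can be made to work, but it introduces two extra sources of error (mollification error and translation error) that the paper's upfront reduction eliminates, and it requires tracking that the translates remain in a fixed $\M^{\A qc}(K')$. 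Finally, the paper packages the compactly-supported generating sequences for homogeneous measures into Remark~\ref{boundary} (which in turn relies on the cut-off construction already carried out in the proof that $H^x_\A(K)$ is convex), so it does not need to redo the cut-off estimate inside the proof of Theorem~\ref{mainYM}; your proposal correctly identifies that this estimate is needed but defers it. In short: same strategy, but the paper's zero-barycenter reduction and cube-averaging make the bookkeeping substantially lighter than your mollification/Scorza--Dragoni route.
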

As mentioned, the proofs in the case $\A =\curl$ can be found in \cite{Kinderlehrer,Mueller_var,Rindlerbook} and in the general case in the Appendix. Let us shortly describe the strategy of the proofs. For Theorem \ref{hommain} one may prove that $H_{\A}(K)$ is weakly compact, that averages of (non-homogeneous) $\A$-$\infty$ Young measures are in $H_{\A}(K)$ and that the set $H_{A}^x(K)= \{ \nu \in H_{\A} \colon \li \nu, \id \re =x\}$ is weak$*$ closed and convex. The characterisation theorem then follows by using Hahn-Banach separation theorem and showing that any $\mu \in M^{\A qc}$ cannot be separated from $H_{\A}(K)$, i.e. for all $f \in C(K)$ and for all $\mu \in M^{\A qc}(K)$ with $\li \mu, \id \re=0 $ \begin{displaymath}
\li \nu, f \re \geq 0 \text{ for all } \nu \in H_{\A}^0(K) \Rightarrow \li \mu, f \re \geq 0.
\end{displaymath}
Theorem \ref{mainYM} then can be shown using Proposition \ref{hommain} and a localisation argument.

\textbf{Acknowledgements:} The author would like to thank Stefan M\"uller for introducing him to the topic and for helpful discussions. The author has been supported by the Deutsche Forschungsgemeinschaft (DFG, German Research Foundation) through the graduate school BIGS of the Hausdorff Center for Mathematics (GZ EXC 59 and 2047/1, Projekt-ID 390685813).

\begin{appendix}
\section{Proofs of Theorem \ref{hommain} and Theorem \ref{mainYM}}
We want to prove the characterisation Theorem \ref{hommain}. For this result we shall prove some auxiliary lemmas first. We mainly follow the proofs in the $\curl$-free setting in \cite{Kinderlehrer,Rindlerbook,Mueller_var}.
\begin{lemma} (Properties of $H_{\A}(K)$) \label{prop1}
\begin{enumerate} [i)]
\item If $\nu \in H_{\A}(K)$ with $\li \nu, \id \re =0$, then there exists a sequence $u_j \in L^{\infty}(T_N,\R^d)$ such that $\A u_j   =0$, $u_j$ generates $\nu$ and $\norm u_j \norm_{L^{\infty}(T_N,\R^d)} \leq C \sup_{z \in K} \V z \V = C \V K \V_{\infty}$.
\item $H_{\A}(K)$ is weakly$*$ compact in $\M (\R^d)$.
\end{enumerate}
\end{lemma}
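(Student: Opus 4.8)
The plan is to deduce part~(i) from the truncation property~(ZL) together with the Fundamental Theorem of Young measures (Proposition~\ref{FTOYM}), and to deduce part~(ii) from part~(i) and the fact, recorded in Remark~\ref{metrizable}, that the weak$*$ topology is metrisable on bounded subsets of $L^\infty_w(T_N,\M(\R^d))$. Concretely, for~(i) I would take any bounded $\A$-free sequence generating $\nu$, observe that it must concentrate near $K$, and run it through~(ZL); for~(ii) I would use~(i) to equip each member of a weak$*$-convergent sequence in $H_{\A}(K)$ with a \emph{uniformly} $L^\infty$-bounded $\A$-free generating sequence, and then diagonalise inside the metric space of Remark~\ref{metrizable}.

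For part~(i), set $R_0:=\sup_{z\in K}\V z\V$ and choose, from the definition of an $\A$-$\infty$-Young measure, a sequence $w_j\in L^\infty(T_N,\R^d)\cap\ker\A$ with $\norm w_j \norm_{L^\infty}\leq M$ for all $j$ that generates $\nu$. As the $w_j$ are uniformly bounded, part~(iv) of Proposition~\ref{FTOYM} yields $\norm \nu_x \norm_{\M(\R^d)}=1$ a.e., and then part~(vi) promotes $\spt\nu_x\subset K$ to $\dist(w_j,K)\to 0$ in measure. Since $\V w_j(x)\V>2R_0$ forces $\dist(w_j(x),K)>R_0$, we get $\V \{\V w_j \V > 2R_0\} \V \to 0$, hence $\int_{\{\V w_j \V > 2R_0\}} \V w_j \V \dy \leq M \, \V \{\V w_j \V > 2R_0\} \V \to 0$. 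Applying~(ZL) with $L=2R_0$ produces a constant $C=C(\A)$ and $u_j\in L^1(T_N,\R^d)\cap\ker\A$ with $\norm u_j \norm_{L^\infty}\leq 2CR_0$ and $\norm u_j - w_j \norm_{L^1}\to 0$; as $u_j-w_j\to 0$ in measure, $u_j$ still generates $\nu$, and $\norm u_j \norm_{L^\infty}\leq 2C\sup_{z\in K}\V z\V$ is the claimed bound after renaming the constant. (The hypothesis $\li\nu,\id\re=0$ is not needed for the conclusion as stated; it only serves to arrange in addition $\int_{T_N} u_j \dy = 0$, by replacing $u_j$ with $u_j$ minus its mean, which stays in $\ker\A$, still generates $\nu$ because $\int_{T_N} u_j \dy \to \li\nu,\id\re = 0$ by part~(v) of Proposition~\ref{FTOYM}, and keeps a comparable $L^\infty$-bound.)

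For part~(ii), observe first that every $\nu\in H_{\A}(K)$ is a probability measure with $\spt\nu\subset K$, so $H_{\A}(K)$ lies inside the set of all such measures, which is bounded and weak$*$ closed in $\M(\R^d)=C_0(\R^d)^\ast$, hence weak$*$ compact by Banach--Alaoglu and metrisable in the weak$*$ topology because $C_0(\R^d)$ is separable; it therefore suffices to prove that $H_{\A}(K)$ is weak$*$ sequentially closed. So let $\nu^n\in H_{\A}(K)$ with $\nu^n\weakstar\nu$; then $\nu$ is again a probability measure supported on $K$, and I would fix a closed ball $\bar B$ containing $K$ and the ranges of all the truncated sequences below. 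For each $n$, the construction of part~(i) (which nowhere uses the centring condition) provides $u^n_j\in L^\infty(T_N,\R^d)\cap\ker\A$ generating $\nu^n$, with $\norm u^n_j \norm_{L^\infty}$ bounded by a fixed multiple of $\sup_{z\in K}\V z\V$, uniformly in $n$ and $j$. Let $d$ be the metric of Remark~\ref{metrizable} on bounded subsets of $L^\infty_w(T_N,\M(\bar B))$, and identify each $\nu^n$ and $\nu$ with the corresponding constant map $T_N\to\M(\bar B)$ and each $u^n_j$ with the elementary Young measure $x\mapsto\delta_{u^n_j(x)}$. Then ``$u^n_j$ generates $\nu^n$'' is exactly $d(u^n_j,\nu^n)\to 0$ as $j\to\infty$, while $d(\nu^n,\nu)\to 0$ as $n\to\infty$ since $\li\nu^n,f\re\to\li\nu,f\re$ for every test function $f$ and one passes to the limit in the series defining $d$ by dominated convergence. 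Choosing $j(n)$ with $d(u^n_{j(n)},\nu^n)<1/n$ and putting $w_n:=u^n_{j(n)}$, the triangle inequality gives $d(w_n,\nu)\to 0$; that is, the uniformly bounded $\A$-free sequence $(w_n)$ generates the homogeneous Young measure $\nu$, so $\nu\in H_{\A}(K)$.

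The step I expect to be the main obstacle is ensuring that the diagonal sequence $w_n$ genuinely \emph{generates} $\nu$, rather than merely reproducing its moments $\li\nu,f\re$: this is exactly why the diagonalisation must be performed inside the metric space $(L^\infty_w(T_N,\M(\bar B)),d)$ of Remark~\ref{metrizable}, where $d(w_n,\nu)\to 0$ is by definition the statement ``$(w_n)$ generates $\nu$''. The two remaining ingredients --- upgrading $\spt\nu_x\subset K$ to $\dist(w_j,K)\to 0$ in measure (legitimate because the uniform $L^\infty$-bound on a generating sequence forces $\norm \nu_x \norm_{\M}=1$ a.e.), and the fact that the $L^1$-small correction supplied by~(ZL) leaves the generated Young measure unchanged --- are routine once this setup is fixed.
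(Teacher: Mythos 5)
Your proof is correct and follows essentially the same route as the paper: for part~(i) you use FTOYM~(iv) and~(vi) to upgrade $\spt\nu_x\subset K$ to convergence to $K$ in measure and then feed the generating sequence through~(ZL), and for part~(ii) you embed $H_{\A}(K)$ in the weak$*$ compact, metrisable space of probability measures and diagonalise using the metric of Remark~\ref{metrizable}, exactly as the paper does. The only differences are cosmetic: you spell out the $L^\infty$-bound argument the paper leaves to the reader, and you note (correctly) that the centring hypothesis $\li\nu,\id\re=0$ is not needed for the statement as written.
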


\begin{proof}
i) follows from the definition of $H_{\A}(K)$. The uniform bound on the $L^{\infty}$ norm of $u_j$ can be guaranteed by (ZL) and vi) in Theorem \ref{FTOYM}. 

For the weak$*$ compactness note that $H_{\A}(K)$ is contained in the weak$*$ compact set $\P(K)$ of probability measures on $K$. As the weak$*$ topology is metrisable on $\P(K)$ it suffices to show that $H_{\A}(K)$ is sequentially closed. Hence, we consider a sequence $\nu_k \subset H_{\A}(K)$ with $\nu_k \weakstar \nu$ and show that $\nu \in H_{\A}(K)$

Due to the definition of Young measures, we may find sequences $u_{j,k} \in L^{\infty}(T_N,\R^d) \cap \ker \A$ such that $u_{j,k}$ generates $\nu_k$ for $j \to \infty$.
Recall that the topology of generating Young measures is metrisable on bounded set of $L^{\infty}(T_N,\R^d)$ (c.f. Remark \ref{metrizable}). We may find a subsequence $u_{j_k,k}$ which generates $\nu$. As we know that $\norm u_{j_k,k} \norm_{L^{\infty}} \leq C \V K \V_{\infty}$, $\nu \in H_{\A}(K)$ and hence $H_{\A}(K)$ is closed.
\end{proof}
\begin{lemma} \label{average} 
Let $\nu$ be an $\A$-$\infty$-Young measure generated by a bounded sequence $u_k \in L^{\infty}(T_N,\R^d) \cap \ker \A$. Then the measure $\bar{\nu}$ defined via duality for all $f \in C_0(\R^d)$ by \begin{displaymath}
\li \bar{\nu},f \re = \int_{T_N} \li \nu_x ,f \re \dx
\end{displaymath}
is in $H_{\A}(K)$.
\end{lemma}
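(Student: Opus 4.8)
The statement asks us to show that the "space average" $\bar\nu = \int_{T_N} \nu_x \dx$ of a (possibly non-homogeneous) $\A$-$\infty$-Young measure $\nu$ is itself a \emph{homogeneous} $\A$-$\infty$-Young measure supported on $K$. The natural strategy is to exhibit an explicit generating sequence: starting from a bounded generating sequence $u_k \in L^\infty(T_N,\R^d)\cap\ker\A$ for $\nu$, I would build a new sequence by a periodic-rescaling-and-diagonalisation argument, in the spirit of the classical constructions in \cite{Kinderlehrer}.

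First I would record that $\bar\nu$ is a probability measure with $\spt\bar\nu \subset K$: nonnegativity and total mass $1$ follow by integrating the corresponding pointwise properties from Theorem \ref{FTOYM}(i), (iv) (using that $u_k$ is bounded, so \eqref{1prime} holds), and the support condition follows since $\li\bar\nu, f\re = \int_{T_N}\li\nu_x,f\re\dx = 0$ whenever $f\in C_0(\R^d)$ vanishes on $K$ (again Theorem \ref{FTOYM}(iii)). The real content is producing a generating sequence of $\A$-free fields that is \emph{spatially homogeneous in the limit}. For fixed $k$, consider the periodic rescalings $y \mapsto u_k(my)$ for $m\in\N$: these still lie in $\ker\A$ (the operator $\A$ is homogeneous with constant coefficients, so it commutes with dilations up to a scalar), they remain bounded by $\norm u_k\norm_{L^\infty}$, and as $m\to\infty$ they generate the homogeneous Young measure $x\mapsto \int_{T_N}\nu^{(k)}_y\dy$, where $\nu^{(k)}$ is the Young measure generated by $u_k$ along a suitable subsequence. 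I then take $k\to\infty$ and use that the Young measures generated by $u_k$ converge (in the metrisable topology on bounded sets of $L^\infty_w$, cf. Remark \ref{metrizable}) to $\nu$, so the spatial averages converge to $\bar\nu$. A diagonal extraction over $(k,m)$ then yields a single bounded sequence $w_\ell\in L^\infty(T_N,\R^d)\cap\ker\A$ generating the homogeneous measure $\bar\nu$, with $\norm w_\ell\norm_{L^\infty}\le C\V K\V_\infty$. By Theorem \ref{FTOYM}(iii) (equivalence, since \eqref{1prime} holds) the support stays inside $K$, so $\bar\nu\in H_{\A}(K)$.

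A cleaner route that avoids some of the bookkeeping: since $\nu$ is an $\A$-$\infty$-Young measure, it already satisfies the pointwise Jensen inequality $\li\nu_x, f\re \ge \QA f(\li\nu_x,\id\re)$ for a.e.\ $x$ and all continuous $\A$-quasiconvex $f$ (this is the easy, necessary direction of the $\A$-$p$ classification). Integrating over $T_N$ and using convexity of $\QA f$ along $\A$-free variations — more precisely, that $\QA f$ is $\A$-quasiconvex, hence satisfies Jensen's inequality against the $\A$-free oscillation $u - \int u$ — one obtains $\li\bar\nu, f\re \ge \QA f(\li\bar\nu,\id\re)$ for every $\A$-quasiconvex $f$, i.e.\ $\bar\nu\in\M^{\A qc}(K)$. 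One could then invoke Theorem \ref{hommain} to conclude $\bar\nu\in H_{\A}(K)$; but since Lemma \ref{average} is used in the proof of Theorem \ref{hommain}, this would be circular, so I would keep the direct generating-sequence construction above and only use the Jensen inequalities as a consistency check.

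The main obstacle is the interchange of the two limits ($m\to\infty$ in the rescaling, $k\to\infty$ in the original sequence) together with the support constraint: one must make sure the diagonal sequence genuinely generates $\bar\nu$ (not merely some measure with the right first moment) and stays supported near $K$ uniformly. This is handled by working in the metric of Remark \ref{metrizable}: for each $k$ pick $m_k$ so large that $d\big(\text{YM}(u_k(m_k\,\cdot)),\ \int_{T_N}\nu^{(k)}_y\dy\big) < 1/k$, and simultaneously $d\big(\int_{T_N}\nu^{(k)}_y\dy,\ \bar\nu\big)\to 0$ because $\nu^{(k)}\to\nu$ in that same metric and integration over $T_N$ is continuous; a triangle-inequality estimate then shows $w_k := u_k(m_k\,\cdot)$ generates $\bar\nu$. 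The uniform $L^\infty$ bound and Theorem \ref{FTOYM}(iii) take care of the support.
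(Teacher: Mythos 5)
Your argument follows the paper's proof: for each fixed $k$ you pass to the oscillating rescalings $u_k(m\,\cdot)$, identify their $m\to\infty$ Young-measure limit as the homogeneous measure $f\mapsto\int_{T_N}f(u_k(y))\,\textup{d}y$, and then use the metrisability of $L^\infty_w$ on bounded sets (Remark \ref{metrizable}) together with Theorem \ref{FTOYM}(ii) to extract a diagonal subsequence $u_k(m_k\,\cdot)$ generating $\bar\nu$. The only thing to tidy up is notation: the intermediate homogeneous measure you call $\int_{T_N}\nu^{(k)}_y\,\textup{d}y$ is just the pushforward of Lebesgue measure on $T_N$ under $u_k$ (i.e.\ $\nu^{(k)}_y=\delta_{u_k(y)}$, the trivial Young measure of the fixed function $u_k$), not a Young-measure limit "along a subsequence"; the paper simply writes $\int_{T_N}f(u_k)$, which is clearer.
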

\begin{proof}
For $n \in \N$ define $u_k^n \in L^{\infty}(T_N,\R^d) \cap \ker \A$ by $u_k^n(x)=u_k(nx)$. Then for all $f \in C_0(\R^d)$ \begin{displaymath}
f(u_k^n) \weakstar \int_{T_N} f(u_k) \text{ in } L^{\infty}(T_N,\R^d) \quad \text{as } n \to \infty .
\end{displaymath}
Note that by Theorem \ref{FTOYM} ii) we also have \begin{displaymath}
\int_{T_N} f(u_k(x)) \dx \longrightarrow \int_{T_N} \li \nu_x, f \re \dx \quad \text{ as } k \to\infty.
\end{displaymath}
Due to metrisability on bounded sets (Remark \ref{metrizable}), we can find a subsequence $u_k^{k(n)}$ in $L^{\infty}(T_N,\R^d)$ such that \begin{displaymath}
f(u_k^{n(k)}) \weakstar \int_{T_N} \li \nu_x, f \re \dx \quad \text{as } k \to \infty.
\end{displaymath}
Thus, $\bar{\nu} \in H_{\A}(K)$.
\end{proof}
\begin{lemma} 
Define the set $H_{\A}^x(K) := \{ \nu \in H_{\A} \colon \li \nu, \id \re = x\}$. Then $H_{\A}^x(K)$ is weak$*$ closed and convex. 
\end{lemma}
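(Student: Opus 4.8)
The plan is to handle the two assertions separately: weak$*$ closedness is routine, while convexity carries all the weight and rests on a standard oscillation construction combined with property~(ZL).

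\emph{Weak$*$ closedness.} First I would note that $H_{\A}(K)$ is weak$*$ compact by Lemma~\ref{prop1}(ii) and that each of its elements is a probability measure supported in the \emph{fixed} compact set $K$. Hence the barycentre map $\nu\mapsto\li\nu,\id\re$ is weak$*$ continuous on $H_{\A}(K)$: testing against any $f\in C_c(\R^d)$ which coincides with $\id$ on a neighbourhood of $K$ shows that $\nu_k\weakstar\nu$ in $H_{\A}(K)$ forces $\li\nu_k,\id\re\to\li\nu,\id\re$. Therefore $H_{\A}^x(K)=H_{\A}(K)\cap\{\nu:\li\nu,\id\re=x\}$ is the preimage of $\{x\}$ under a continuous map on a compact space, hence weak$*$ closed (in fact weak$*$ compact).

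\emph{Convexity: reduction.} Given $\nu^0,\nu^1\in H_{\A}^x(K)$ and $\lambda\in(0,1)$, set $\nu:=\lambda\nu^0+(1-\lambda)\nu^1$. Then $\nu\geq0$, $\spt\nu\subset K$ and (by linearity) $\li\nu,\id\re=x$ are immediate, so everything reduces to showing that $\nu\in H_{\A}(K)$, i.e. that $\nu$ is a homogeneous $\A$-$\infty$-Young measure. I would do this by producing a bounded sequence $w_j\in L^{\infty}(T_N,\R^d)\cap\ker\A$ with $\dist(w_j,K)\to0$ in measure that generates the homogeneous measure $\nu$. (Equivalently, one could build an \emph{inhomogeneous} $\A$-$\infty$-Young measure that equals $\nu^0$ on a subset of $T_N$ of measure $\lambda$ and $\nu^1$ on its complement and then invoke Lemma~\ref{average}; the core construction is the same.)

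\emph{Convexity: construction.} By Lemma~\ref{prop1}(i), applied after subtracting the constant $x\in\operatorname{conv}K$ and translating back, each $\nu^i$ is generated by some $u^i_k\in L^{\infty}(T_N,\R^d)\cap\ker\A$ with $\norm u^i_k\norm_{L^{\infty}(T_N,\R^d)}\leq C\norm K\norm_\infty$ and $\dist(u^i_k,K)\to0$ in measure; using an oscillation and the metrisability of generation (Remark~\ref{metrizable}) I may further take $u^i_k$ smooth and $\Z^N$-periodic. Writing $u^i_k=x+\B\varphi^i_k$ with $\B$ the potential of $\A$ and $\varphi^i_k$ smooth and $\Z^N$-periodic, I assemble $w_j$ as in the classical Kinderlehrer--Pedregal argument: tile $T_N$ $\Z^N$-periodically at a scale $\varepsilon_j\to0$ by small cubes, assigning an equidistributed proportion $\lambda$ of them type $0$ and the rest type $1$ (approximating $\lambda$ by rationals if necessary); on a type-$i$ cube place a rescaled copy of $\varphi^i_{k(j)}$ multiplied by a fixed cutoff that is $1$ off a thin boundary layer; and set $w_j:=x+\B\psi_j$, which lies in $\ker\A$ automatically since $\B$ is a potential of $\A$. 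For a suitable hierarchy of scales (inner oscillation $\ll$ cube side $\varepsilon_j$ $\ll$ inverse layer width, and $k(j)\to\infty$), off the union of boundary layers $w_j$ agrees locally with a translate and rescaling of some $u^i_{k(j)}$, the layers have total measure $o(1)$, and hence $\dist(w_j,K)\to0$ in measure while $w_j$ generates the homogeneous $\nu=\lambda\nu^0+(1-\lambda)\nu^1$. On the layers $w_j$ may blow up, but only on a set of measure $o(1)$ on which $\int_{T_N}\V w_j\V\dy$ can moreover be kept $o(1)$; property~(ZL) (with $L=C\norm K\norm_\infty$) then furnishes $\tilde w_j\in L^{\infty}(T_N,\R^d)\cap\ker\A$ with $\norm{\tilde w_j}\norm_{L^{\infty}(T_N,\R^d)}\leq C\norm K\norm_\infty$ and $\norm{\tilde w_j-w_j}\norm_{L^1(T_N,\R^d)}\to0$. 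Since $\tilde w_j-w_j\to0$ in measure, $\tilde w_j$ generates the same Young measure $\nu$ and still satisfies $\dist(\tilde w_j,K)\to0$ in measure, so $\nu\in H_{\A}(K)$; combined with $\li\nu,\id\re=x$ this gives $\nu\in H_{\A}^x(K)$.

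\emph{Main obstacle.} The only delicate point is the quantitative bookkeeping of scales in the construction of $\psi_j$: one must choose the three scales simultaneously so that the boundary layers shrink in measure and carry negligible $L^1$-mass of $w_j$, so that the rescaled potentials still generate $\nu^i$ on the bulk of each cube, and so that the alternation between type-$0$ and type-$1$ regions averages out, making the limit Young measure genuinely homogeneous; property~(ZL) is precisely the tool that lets the otherwise uncontrolled transition layers be absorbed without altering the generated measure.
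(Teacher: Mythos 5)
Your proposal is correct and follows essentially the same route as the paper: for convexity you lift the generating sequences to potentials via $\B$, cut them off with shrinking boundary layers to localize, patch, oscillate to homogenize, and invoke property~(ZL) to absorb the $L^1$-small transition layers back into a uniformly bounded sequence in $\ker\A$; the weak$*$ closedness argument (barycentre is weak$*$ continuous on measures supported in a fixed compactum, intersected with the weak$*$ compact $H_{\A}(K)$) is the standard unpacking of the paper's one-line reference to Lemma~\ref{prop1}. The only nontrivial implementation difference is cosmetic: the paper glues exactly once across the hyperplane $\{x_1=\lambda\}$ of the unit cube to build an inhomogeneous $\A$-$\infty$-Young measure and then homogenizes by one further oscillation $w_j(n\cdot)$, which avoids your need to approximate $\lambda$ by rationals and the attendant equidistribution bookkeeping; the fine periodic tiling with type-$0$/type-$1$ cubes that you construct directly is precisely what the paper's oscillation step produces, so the final sequences are the same modulo the shape of the tiles.
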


\begin{proof}
Weak$*$ closedness is clear by Lemma \ref{prop1}. It suffices to show that for any $\nu_1$, $\nu_2 \in H_{\A}^x(K)$ and $\lambda \in [0,1]$ it holds $\lambda \nu_1 + (1- \lambda) \nu_2 \in H_{\A}^x(K)$. Due to Lemma \ref{prop1} there exist bounded sequences $u_j$, $v_j \in L^{\infty}(T_N,\R^d) \cap \ker \A$ generating $\nu_1$ and $\nu_2$, respectively.

Note that $u_j$ generates $\nu_1$ iff for all $f \in C_c(\R^d)$ \begin{displaymath}
f(u_j) \weakstar \int_{\R^d} f~ \textup{d}\nu_1 \text{ as } j \to \infty \text{ in } L^{\infty}(T_N,\R^d).
\end{displaymath}
Now consider the sequence $u_j^n(\cdot)$ = $u_j(n\cdot)$ for $n \in \N$. We know that for all $f \in C_c(\R^d)$ \begin{displaymath}
f(u_j^n) \weakstar \int_{T_N} f(u_j(y)) \dy \quad  \text{ as } n \to \infty  \text{ in } L^{\infty}(T_N,\R^d).
\end{displaymath}

As $\nu_1$ is a homogeneous measure, we also know that \begin{displaymath}
f(u_j) \weakstar \int_{\R^d} f~ \textup{d}\nu_1 \quad \text{as } j \to \infty \text{ in } L^{\infty}(T_N,\R^d).
\end{displaymath}
By a diagonalisation argument (which works since $L^{\infty}_w(T_N,\M(\R^d))$ is metrisable, Remark \ref{metrizable}), we may pick a subsequence $u_j^{n(j)}$ with the following properties: \begin{enumerate} [a)]
\item $u_j^{n(j)}$ generates the homogeneous Young measure $\nu_1$.
\item $k(j) \to \infty$ as $j \to \infty$.
\end{enumerate}

We now use that we have a potential operator $\B$ of degree $k$ (c.f. \cite{Raita}). Hence, we may find $U_j \in W^{k,2}(T_N,\R^d)$ satisfying \begin{enumerate} [i)]
\item $\B U_j = u_j$;
\item $\norm U_j \norm _{W^{k,2}(T_N,\R^m)} \leq C \norm u_j \norm_{L^2(T_N,\R^d)} \leq C \norm u_j \norm_{L^{\infty}(T_N,\R^d)}$.
\end{enumerate}
By scaling, we get potentials $U_j^{n(j)} = n(j)^{-k} U_j(n(j)x)$ such that $\B U_j^{n(j)} = u_j^{n(j)}$.

Consider smooth cut-off functions $\phi_n \in C_c^{\infty}((0,\lambda)\times(0,1)^{N-1}, [0,1])$  such that $\phi_n \to 1$ pointwise and in $L^1((0,\lambda)\times(0,1)^{N-1},\R)$, $\phi_n (x) =1 $ if $\dist(x,\partial ((0,\lambda)\times(0,1)^{N-1}) > \frac{1}{n}$, and for all $l \in \N$ \begin{displaymath}
\norm \nabla^l \phi_n \norm_{L^{\infty}} \leq C_l n^l.
\end{displaymath}

We define $\tilde{U}_j^{n(j)} = \phi_{n(j)} \cdot U_j^{n(j)}$ and $\tilde{u}_j = \B \tilde{U}_j^{n(j)}$. Note that \begin{enumerate} [1)]
\item $\A \tilde{u}_j = 0$ as it is element of the image of $\B$;
\item $\tilde{u}_j$ is compactly supported in $(0,\lambda)\times(0,1)^{N-1}$;
\item $\norm \tilde{u}_j - u_j^{k(j)} \norm_{L^1} \to 0$ as $j \to \infty$.
\end{enumerate}
As $u_j^{k(j)}$ is in $L^{\infty}$ with uniform bound $C \V K \V_{\infty}$ (c.f. Lemma \ref{prop1}), property 3) implies \begin{equation} \label{Zhangbound1}
\int_{\{y \in (0,\lambda)\times(0,1)^{N-1} \colon \V \tilde{u}_j(y) \V \geq C \V K  \V_{\infty}\}} \V \tilde{u}_j(y) \V \dy \longrightarrow 0 \quad \text{as } j \to \infty.
\end{equation}
We may construct a similar sequence $\tilde{v}_j$ satisfying $\tilde{v}_j =0$, $\tilde{v}_j$ is compactly supported  in $(\lambda,1)\times (0,1)^{N-1}$ and also satisfies an estimate of type \ref{Zhangbound1}. Due to property 3), $\tilde{u}_j$ still generates the homogeneous Young measure $\nu_1$. Hence, \begin{displaymath}
w_j := \left\{ \begin{array}{ll} \tilde{u}_j & \text{on } (0,\lambda) \times (0,1)^{N-1}, \\ 
								\tilde{v}_j & \text{on } (\lambda,1) \times (0,1)^{N-1} \end{array} \right.
\end{displaymath}
satisfies $\A w_j =0$, $w_j$ is compactly supported on the unit cube, and due to \eqref{Zhangbound1}, \begin{displaymath}
\int_{\{\V w_j \V \geq C \V K  \V_{\infty}\}} \V w_j(y) \V \dy \longrightarrow 0 \quad \text{as } j \to \infty.
\end{displaymath}

Moreover, $w_j$ generates the Young measure $\nu$ with $\nu_x = \nu_1$ on $(0,\lambda) \times [0,1]^{N-1}$ and $\nu_x =\nu_2 $ on $(\lambda,1) \times [0,1]^{N-1}$. 
Again, consider the oscillating version $w_j^n(x) = w_j (nx)$, which generates $\int_{T_N} f(w_j) \dy$ as a Young measure. By the same argument as before taking a suitable subsequence $w_j^{n(j)}$, this subsequence generates $\lambda \nu_1+ (1-\lambda) \nu_2$. Note that $w_j^{n(j)}$ is not in $L^{\infty}$, but it is almost in $L^{\infty}$, i.e. \begin{displaymath}
\int_{\{ y \in T_N \colon \V w_j^{n(j)} \V \geq C \V K \V_{\infty} \}} \V w_j^{n(j)} \V \dy \longrightarrow 0 \quad \text {as } j \to \infty.
\end{displaymath}
Hence, by property (ZL), there exists a $\widetilde{w}_j$ uniformly bounded in $L^{\infty}(T_N,\R^d) \cap \ker \A$ and with $\norm w_j - w_j^{n(j)} \norm_{L^1(T_N,\R^d)} \to 0$ as $j \to \infty$. Thus, $\widetilde{w}_j$ still generates $\lambda \nu_1 + (1- \lambda) \nu_2$ and is bounded in $L^{\infty}$. We conclude that $\lambda \nu_1 + (1- \lambda) \nu_2 \in H_{\A}^x(K)$ and thus this set is convex.

\end{proof}
 
\begin{rem} \label{boundary}
Using the construction we made in the proof (one can choose $\lambda=1$), we may also prove the following characterisation of $\A$-$\infty$-Young measures: $\nu$ is a homogeneous $\A$-$\infty$-Young measure if there exists a sequence $u_n \in L^{1}(\R^N,\R^d) \cap \ker \A$ with $\spt u_n \subset Q = [0,1]^N$ and \begin{displaymath}
\int_{\{\V u_n \V \geq L\}} \V u_n \V \longrightarrow 0 \quad \text{as } n \to \infty.
\end{displaymath}
By convolution we may even assume that $u_n \in C_c^{\infty}((0,1)^N,\R^d) \cap \ker \A$.
\end{rem}

We are now ready to prove Theorem \ref{hommain}.
\begin{proof}[Proof of Theorem \ref{hommain}:] We have that $H_{\A}(K) \subset \M^{\A qc}$ due to the fundamental theorem of Young measures: $\nu \geq 0$ and $\spt \nu \subset K$ are clear by i) and iii) of Theorem \ref{FTOYM}.
The corresponding inequality follows by $\A$-quasiconvexity, i.e. if $u_n \in L^{\infty}(T_N,\R^d) \cap \ker \A$ generates the Young measure $\nu$, then 
\begin{displaymath}
\li \nu, f \re = \lim_{n \to \infty} \int_{T_N}  f (u_n(y)) \dy 
		\geq \liminf_{n \to \infty} f\left(\int_{T_N} u_n(y) \dy \right) 
		=  f(\li \nu, \id \re).
\end{displaymath}
To prove $M^{\A qc}(K) \subset H_{\A}(K)$, w.l.o.g. consider a measure such that $\li \nu, \id \re=0$. We just proved that $H_{\A}^0(K)$ is weak$*$ closed and convex. Remember that $C(K)$ is the dual space of the space of signed Radon measures $\M(K)$ with the weak$*$ topology (see e.g. \cite{Rudin}). Hence, by Hahn-Banach separation theorem, it suffices to show that for all $f \in  C(K)$ and all $\mu \in M^{\A qc}(K)$ with $\li \mu,\id \re=0$\begin{displaymath}
\li \nu, f \re \geq 0 \text{ for all } \nu \in H_{\A}^0(K)  \Rightarrow \li \mu, f \re \geq 0.
\end{displaymath}
To this end, fix some $f \in C(K)$, consider a continuous extension to $C_0(\R^d)$ and let \begin{displaymath}
f_k(x) = f(x) + k \dist^2(x,K).
\end{displaymath}
We claim that  \begin{equation}  \label{homogclaim}
\lim_{k \to \infty} \QA f_k(0) \geq 0.
\end{equation}
If we show \eqref{homogclaim}, $\mu$ satisfies \begin{displaymath}
\li \mu,f \re = \li \mu, f_k \re \geq \li \mu , \QA f_k \re  \geq \QA f_k(0),
\end{displaymath}
finishing the proof. For the identity $\li \mu,f \re = \li \mu, f_k \re $ recall that $\mu $ is supported in $K$ and $\dist^2(x,K) = 0$ for $x \in K$.

Hence, suppose that \eqref{homogclaim} is wrong. As $f_k$ is strictly increasing, there exists $\delta >0$ such that 
\begin{displaymath}
\QA f_k(0) \leq - 2 \delta, \quad k \in \N.
\end{displaymath}
Using the definition of the $\A$-quasiconvex hull \eqref{fcthull}, we get $u_k \in L^{\infty}(T_N,\R^d) \cap \ker \A$ with $\int_{T_N} u_k(y) \dy =0$ and \begin{equation} \label{contradict1}
\int_{T_N} f_k (u_k(y)) \dy \leq -\delta.
\end{equation}
We may assume that $u_k \weakto 0$ in $L^2(T_N,\R^d)$ and also that $\dist(u_k,K) \to 0$ in $L^1(T_N)$. By property (ZL), there exists a sequence $v_k \in \ker \A$ bounded in $L^{\infty}(T_N,\R^d)$ with $\norm u_k -v_k \norm_{L^1} \to 0$. $v_k$ generates (up to taking subsequences) a Young measure $\nu$ with $\spt \nu_x \subset K$.

Then for fixed $j \in \N$, using Lemma \ref{average} and that $\bar{\nu} \in H_{\A}(K) \subset M^{\A qc}(K)$, \begin{displaymath}
\liminf_{k \to \infty} \int_{T_N} f_j(u_k(y)) \dy \geq \liminf_{k \to \infty} \int_{T_N} f_j(v_k(y)) \dy 
 = \int_{T_N} \int_{\R^d} f_j ~\textup{d} \nu_x \dx = \li \bar{\nu}, f \re \geq 0.
\end{displaymath}
But this is a contradiction to \eqref{contradict1}, as $f_k \geq f_j$ if $k \geq j$.
\end{proof}
Using the result of Theorem \ref{hommain}, we are now able to prove Theorem \ref{mainYM}.
\begin{proof}[Proof of Theorem \ref{mainYM}:]
We first establish the neccessity of i) - iii). If $\nu$ is an $\A$-$\infty$-Young measure then there exists $u_n \in L^{\infty}(T_N,\R^d) \cap \ker \A$ that generates $\nu$. As $u_n$ is bounded in $L^{\infty}$, $\norm u_n \norm_{L^{\infty}} \leq R$, then $\spt \nu_x \subset \bar{B}(0,R)$.

Moreover, $u_n \weakstar \li \nu_x, \id \re=: u$, hence the function $u$ satisfies $\A u=0$. iii) follows from the weak$*$ lower semicontinuity result from \cite{FM}. Note that  for $f \in C(\R^d)$ $\A$-quasiconvex the value of $f(u_n)$ only depends on $f: B(0,R) \to \R$, hence we may consider a dense countable subset of $C(B(0,R)) $. Then for a.e. $x \in T_N$ and all $f$ in this countable subset we may use Lebesgue point theorem and see that \begin{align*}
\li \nu , f \re &= \lim_{r \to 0} \fint_{B_r(x)} f(y) ~ \textup{d} \nu_y \\
			& = \lim_{r \to 0} \lim_{n \to \infty} \fint_{B_r(x)} f(u_n(y)) \dy \\
			&\geq \lim_{r \to 0} \fint_{B_r(x)} f(u(y)) \dy \\
			&= f( \li \nu_x, \id \re).
\end{align*}

 For sufficiency consider such a $\nu \geq 0$ satisfying i) - iii). Suppose first that $\li \nu_x, \id \re =0$ for a.e. $x \in \R^d$.

We divide $T_N$ into $n^N$ equisized subcubes $Q^n_i = q_i +[0,1/n)^N$, $i \in [n^N]$ where $q_i \in 1/n \Z^N$. Define $\nu^n_i$ by duality as \begin{equation} \label{nuin}
\li \nu_i^n, f \re = \fint_{Q^n_i} \li \nu_x, f \re \dx
\end{equation}
and $\nu^n: T_N \to \mathcal{M}(\R^d)$ by \begin{displaymath}
(\nu^n)_x = \nu^n _i \text{ if } x \in Q^n_i.
\end{displaymath}
Note that $\nu^n_i$ is bounded and for all $f \in C_c(\R^d)$ \begin{displaymath}
\li \nu^n , f \re \longrightarrow \li \nu, f \re \text{ in } L^1(\R^d).
\end{displaymath}
Due to boundedness we also know that \begin{displaymath}
\nu^n \weakstar \nu \text{ in } L^{\infty}_w(T_N,\M(\R^d)).
\end{displaymath}
We only need to show that $\nu^n$ is an $\A$-$\infty$ Young measure, as then we may take a diagonal sequence to get the Young measure $\nu$ (recall that the topology was metrisable on bounded sets, Remark \ref{metrizable}).

Fix some $n \in \N$. Note that for any $i \in [n^N]$, by Theorem \ref{hommain}, the measure $\nu^n_i$ is a homogeneous $\A$-$\infty$-Young measure. Indeed, \begin{enumerate} [i)]
    \item $\nu_i^n \geq 0$,
    \item $\spt(\nu_i^n) \subset K$, as for every $x \in Q^n_i$ $\spt \nu_x \subset K$,
    \item  for every $\A$-quasiconvex $f\in C(\R^d)$ we have \begin{align*}
        \li \nu_i^n , f\re = \fint_{Q^n_i} \li \nu_x,f \re 
                            \geq \fint_{Q^n_i} f(\li \nu_x, \id \re)
                            = f(0) = f(\li \nu_i^n, \id \re). 
    \end{align*}
\end{enumerate} Hence, by Remark \ref{boundary} there exists a sequence $u^k_i$ generating $\nu^n_i$ with $u^k_i \in C_c^{\infty}(Q,\R^d) \cap \ker \A$ and \begin{displaymath}
\int_{ \V u^k_i \V \geq C R} \V u_k^i \V \leq 1/k.
\end{displaymath}
By rescaling, we may find $\tilde{u}^k_i \in C_c^{\infty}(Q^n_i) \cap \ker \A$ with $\int_{\V \tilde{u}^k_i\V \geq C  R} \V \tilde{u}_k^i \V \leq n^{-N}/k$.
Thus, we may find $v_k \in L^1(T_N,\R^d) \cap \ker \A$ with \begin{displaymath}
\int_{\V v_k \V \geq CL} \V v_k \V \leq 1/k,
\end{displaymath}
by defining $v_k = \tilde{u}^k_i$ on $Q^n_i$.  $v_k$ generates $\nu^N$ by construction.
By (ZL), we may find $\tilde{v}_k \in L^{\infty}(T_N,\R^d) \cap \ker \A$ bounded with $\norm \tilde{v}_k - v_k \norm_{L^1(T_N,\R^d)} \to 0$, which therefore still generates $\nu^N$. Thus, $\nu^N$ is an $\A$-$\infty$-Young measure and then also $\nu$.

For $\li \nu_x, \id \re = u$ consider the translated measure $\tilde{\nu}$ defined by \begin{displaymath}
\li \tilde{\nu}, f \re = \int_{\R^d} f(y + u_n(x)) ~\textup    {d} \nu_x.
\end{displaymath}
Note that $\li \tilde{\nu}_x, \id \re = 0$ and $\tilde{\nu}$ still satisfies i)-iii). Then we may find a bounded sequence $w_k \in L^{\infty}(T_N,\R^d) \cap \ker \A$ generating $\tilde{u}$. The functions $w_k + u \in L^{\infty}(T_N,\R^d) \cap \ker \A$ then generate $\nu$.
\end{proof}

\end{appendix}
\bibliographystyle{alpha}
\bibliography{diffform_2} 

\begin{thebibliography}{\v{S}93b}

\bibitem[AF84]{Acerbi}
E.~Acerbi and N.~Fusco.
\newblock {Semicontinuity problems in the calculus of variations}.
\newblock {\em Arch. Rat. Mech. Anal.}, 86:125--145, 1984.

\bibitem[Bal89]{Ball}
J.~M. Ball.
\newblock A version of the fundamental theorem for young measures.
\newblock In M.~Rascle, D.~Serre, and M.~Slemrod, editors, {\em {PDEs} and
  {Continuum} {Models} of {Phase} {Transitions}}, pages 207--215, Berlin,
  Heidelberg, 1989. Springer Berlin Heidelberg.

\bibitem[BDF12]{Solenoidal}
Dominic Breit, L.~Diening, and M.~Fuchs.
\newblock {Solenoidal Lipschitz truncation and applications in fluid
  mechanics}.
\newblock {\em Journal of Differential Equations}, 253(6):1910--1942, September
  2012.

\bibitem[BDS13]{Sebastian}
D.~Breit, L.~Diening, and S.~Schwarzacher.
\newblock {Solenoidal Lipschitz turnation for parabolic PDEs}.
\newblock {\em Mathematical Models and Methods in Applied Sciences},
  23(14):2671--2700, 2013.

\bibitem[BFL00]{BFL}
A.~Braides, I.~Fonseca, and G.~Leoni.
\newblock A-quasiconvexity : relaxation and homogenization.
\newblock {\em ESAIM: Control, Optimisation and Calculus of Variations},
  5:539--577, 2000.

\bibitem[Car99]{Cartan}
E.~Cartan.
\newblock {Sur certaines expressions différentielles et le problème de
  Pfaff}.
\newblock {\em Annales Scientifiques de l'École Normale Supérieure},
  16:239--332, 1899.

\bibitem[CMO20]{CMO2}
S.~Conti, S.~Müller, and M.~Ortiz.
\newblock Symmetric {Div}-{Quasiconvexity} and the {Relaxation} of {Static}
  {Problems}.
\newblock {\em Archive for Rational Mechanics and Analysis}, 235(2):841--880,
  February 2020.

\bibitem[Con01]{diffgeo}
L.~Conlon.
\newblock {\em {Differentiable manifolds}}.
\newblock Birkh\"auser Verlag, 2001.

\bibitem[FM99]{FM}
I.~Fonseca and S.~M\"uller.
\newblock {A-quasiconvexity, lower-semicontinuity and Young measures}.
\newblock {\em SIAM J. Math. Anal.}, 30(6):1355--1390, 1999.

\bibitem[Fra11]{Francos}
G.~Francos.
\newblock {Luzin type Approximation of Functions of Bounded Variation}.
\newblock {\em http://d-scholarship.pitt.edu/7947/}, 2011.

\bibitem[Kir34]{KB}
M.~D. Kirszbraun.
\newblock {\"Uber die zusammenziehende und Lipschitzsche Transformation}.
\newblock {\em Fund. Math.}, 22:77--108, 1934.

\bibitem[KK16]{KirchKris}
Bernd Kirchheim and Jan Kristensen.
\newblock On {Rank} {One} {Convex} {Functions} that are {Homogeneous} of
  {Degree} {One}.
\newblock {\em Archive for Rational Mechanics and Analysis}, 221(1):527--558,
  July 2016.

\bibitem[KP91]{Kinderlehrer}
D.~Kinderlehrer and P.~Pedregal.
\newblock {Characterization of Young measures generated by gradients}.
\newblock {\em Arch. Rat. Mech. Anal.}, 115:329--365, 1991.

\bibitem[KP94]{Pedregal}
D.~Kinderlehrer and P.~Pedregal.
\newblock {Gradient Young measures generated by sequences in Sobolev spaces}.
\newblock {\em J. Geom. Anal.}, 4(1):59--90, 1994.

\bibitem[Liu77]{Liu}
F.~C. Liu.
\newblock {A Luzin type property of Sobolev functions}.
\newblock {\em Indiana Univ. Math. J.}, 26(4):645--651, 1977.

\bibitem[Mey65]{Meyers}
N.~Meyers.
\newblock {Quasiconvexity and the Lower Semicontinuity of Multiple Variational
  Integrals of Any Order}.
\newblock {\em Transactions of the American Mathematical Society},
  119(1):125--149, 1965.

\bibitem[M{\v S}Y99]{MSY}
S.~M\"uller, V.~{\v S}ver\'{a}k, and B.~Yan.
\newblock {Sharp stability Results for Almost Conformal Maps in Even
  Dimensions}.
\newblock {\em J. Geom. Anal.}, 9(4):671--681, 1999.

\bibitem[M{\"u}l99]{Mueller_var}
S.~M{\"u}ller.
\newblock Variational models for microstructure and phase transitions.
\newblock In {\em Calculus of {Variations} and {Geometric} {Evolution}
  {Problems}}, Lecture {Notes} in {Mathematics}, pages 85--210. Springer,
  Berlin, Heidelberg, 1999.

\bibitem[Mur81]{Murat}
F.~Murat.
\newblock {Compacit\'e par compensation: condition necessaire et suffisante de
  continuit\'e faible sous une hypoth\'ese de rang constant}.
\newblock {\em Ann. Sc. Norm. Sup. Pisa}, 8:69--102, 1981.

\bibitem[Rai19]{Raita}
B.~Rai\c{t}\u{a}.
\newblock {Potentials for A-quasiconvexity}.
\newblock {\em Calc. Var.}, 58:105, 2019.

\bibitem[RDP17]{RindlerDP}
F.~Rindler and G.~De~Philippis.
\newblock {Characterization of Generalized Young Measures Generated by
  Symmetric Gradients}.
\newblock {\em Arch. Rat. Mech. Anal.}, 224:1087--1125, 2017.

\bibitem[Rin18]{Rindlerbook}
F.~Rindler.
\newblock {\em {Calculus of Variations}}.
\newblock Springer International Publishing, 2018.

\bibitem[RK12]{RindlerKris}
F.~Rindler and J.~Kristensen.
\newblock Characterization of {Generalized} {Gradient} {Young} {Measures}
  {Generated} by {Sequences} in {W1},1 and {BV}.
\newblock {\em Arch. Rat. Mech. Anal.}, 197:539--598, 2012.

\bibitem[Rud73]{Rudin}
W.~Rudin.
\newblock {\em {Functional Analysis}}.
\newblock McGraw-Hill, 1973.

\bibitem[Sch21]{Schiffer2}
S.~Schiffer.
\newblock {A-free truncations and A-quasiconvex hulls of compact sets}.
\newblock {\em in progress}, 2021.

\bibitem[Ste71]{Stein}
E.~Stein.
\newblock {\em {Singular Integrals and Differentiability Properties of
  Functions}}.
\newblock Princeton University Press, 1971.

\bibitem[Tar79]{compcomp}
L.~Tartar.
\newblock {Compensated compactness and applications to partial differential
  equations}.
\newblock In {\em {Nonlinear Analysis and Mechanics: Heriot-Watt Symposium}},
  volume~4, pages 136--212. Pitman Res. Notes Math, 1979.

\bibitem[\v{S}93a]{Sverak4}
V.~\v{S}ver\'{a}k.
\newblock {On Tartar's conjecture}.
\newblock {\em Ann. Inst. Henry Poincar\'e}, 10(4):405--412, 1993.

\bibitem[\v{S}93b]{Sverak3}
V.~\v{S}ver\'{a}k.
\newblock {On the Problem of two wells}.
\newblock {\em Microstructure and Phase transitions}, pages 183--189, 1993.

\bibitem[\v{S}95]{Sverak2}
V.~\v{S}ver\'{a}k.
\newblock {Lower-Semicontinuity of Variational Integrals and Compensated
  Compactness}.
\newblock {\em Proceedings of the International Congress of Mathematicians},
  pages 1153--1158, 1995.

\bibitem[Whi34]{Whitney}
H.~Whitney.
\newblock {Analytic Extensions of Differentiable Functions Defined in Closed
  Sets}.
\newblock {\em Trans. Am. Math. Soc.}, 36:63--89, 1934.

\bibitem[Yan97]{Yan}
B.~Yan.
\newblock {On rank-one convex and polyconvex conformal energy functions with
  slow growth}.
\newblock {\em Proceedings of the Royal Society Edinburgh}, 127:651--663, 1997.

\bibitem[Zha92]{Zhang}
K.~Zhang.
\newblock {A construction of quasiconvex functions with linear growth at
  infinity}.
\newblock {\em Annal. S. N. S. Pisa}, 19(3):313--326, 1992.

\end{thebibliography}

\begin{filecontents*}{diffform_2.bib}

@book{Stein,
	author={Stein,E.},
	title={{Singular Integrals and Differentiability Properties of Functions}},
	publisher={Princeton University Press},
	year={1971},
	}

@article{Whitney,
	author={Whitney,H.},
	title={{Analytic Extensions of Differentiable Functions Defined in Closed Sets}},
	journal={Trans. Am. Math. Soc.},
	volume={36},
	year={1934},
	pages={63-89}
	}
	
@article{FM,
  author={Fonseca, I. and  M\"uller, S.},
  title={{A-quasiconvexity, lower-semicontinuity and Young measures}},
  journal={SIAM J. Math. Anal.},
  volume={30},
  number={6},
  year={1999},
  pages={1355-1390},
}

@article{Francos,
author={Francos, G.},
title={{Luzin type Approximation of Functions of Bounded Variation}},
year={2011},
journal={http://d-scholarship.pitt.edu/7947/},
}

@article{Raita,
	author={Rai\c{t}\u{a}, B.},
	title={{Potentials for A-quasiconvexity}},
	journal={Calc. Var.},
	volume={58},
	pages = {105},
	year={2019},
}

@article{Zhang,
	author={Zhang,K.},
	title={{A construction of quasiconvex functions with linear growth at infinity}},
	journal={Annal. S. N. S. Pisa},
	volume={19},
	year={1992},
	number={3},
	pages={313-326},
	}
	
@article{Meyers,	
	author={Meyers,N.},
	title={{Quasiconvexity and the Lower Semicontinuity of Multiple Variational Integrals of Any Order}},
	journal={Transactions of the American Mathematical Society},
	volume={119},
	number={1},
	year={1965},
	pages={125-149},
}

@article{Solenoidal,
title = {{Solenoidal Lipschitz truncation and applications in fluid mechanics}},
author = "Dominic Breit and L. Diening and M. Fuchs",
year = "2012",
month = sep,
day = "15",
doi = "10.1016/j.jde.2012.05.010",
language = "English",
volume = "253",
pages = "1910--1942",
journal = "Journal of Differential Equations",
issn = "0022-0396",
publisher = "Academic Press Inc.",
number = "6",
}

@incollection{Mueller_var,
	address = {Berlin, Heidelberg},
	series = {Lecture {Notes} in {Mathematics}},
	title = {Variational models for microstructure and phase transitions},
	language = {en},
	urldate = {2019-11-05},
	booktitle = {Calculus of {Variations} and {Geometric} {Evolution} {Problems}},
	publisher = {Springer},
	author = {M{\"u}ller, S.},
	year = {1999},
	pages = {85-210},
}

@article{CMO2,
	title = {Symmetric {Div}-{Quasiconvexity} and the {Relaxation} of {Static} {Problems}},
	volume = {235},
	issn = {1432-0673},
	url = {https://doi.org/10.1007/s00205-019-01433-1},
	doi = {10.1007/s00205-019-01433-1},
	number = {2},
	journal = {Archive for Rational Mechanics and Analysis},
	author = {Conti, S. and Müller, S. and Ortiz, M.},
	month = feb,
	year = {2020},
	pages = {841--880},
}

@article{MSY,
	title={{Sharp stability Results for Almost Conformal Maps in Even Dimensions}},
	author={ M\"uller, S. and {\v S}ver\'{a}k, V. and Yan, B.},
	journal= {J. Geom. Anal.},
	year={1999},
	volume={9},
	number={4},
	pages={671-681},
}

@article{Yan,
	title={{On rank-one convex and polyconvex conformal energy functions with slow growth}},
	author={Yan, B.},
	journal={Proceedings of the Royal Society Edinburgh},
	year={1997},
	volume={127},
	pages={651-663},
}
@article{Kinderlehrer,
	title={{Characterization of Young measures generated by gradients}},
	author={Kinderlehrer, D. and Pedregal,P.},
	journal={Arch. Rat. Mech. Anal.},
	volume={115},
	year={1991},
	pages={329-365},
	}
@article{Acerbi,
	title={{Semicontinuity problems in the calculus of variations}},
	author={Acerbi,E. and Fusco,N.},
	journal={Arch. Rat. Mech. Anal.},
	volume={86},
	year={1984},
	pages={125-145},
	}
	
@book{Dac,
	title={{Direct Methods in the Calculus of Variations}},
	author={Dacorogna,B.},
	publisher={Springer-Verlag New York},
	year={2008},
	edition={2},
}

@book{Rudin,
	title={{Functional Analysis}},
	author={Rudin,W.},
	publisher={McGraw-Hill},
	year={1973},
}
	
@article{Sebastian,
	author={Breit, D. and Diening, L. and Schwarzacher, S.},
	title={{Solenoidal Lipschitz turnation for parabolic PDEs}},
	journal={Mathematical Models and Methods in Applied Sciences},	
	year={2013},
	pages={2671-2700},
	volume={23},
	number={14},
	}
	
@article{KB,
    author={Kirszbraun, M. D.},
    title={{\"Uber die zusammenziehende und Lipschitzsche Transformation}},
    journal={Fund. Math.},
    year={1934},
    volume={22},
    issue={1},
    pages={77-108},
    }
    
@article{Liu,
    author={Liu, F. C.},
    title={{A Luzin type property of Sobolev functions}},
    journal={Indiana Univ. Math. J.},
    volume={26},
    number={4},
    year={1977},
    pages={645-651},
    }

@article{Cartan,
    author={Cartan, E.},
    title={{Sur certaines expressions différentielles et le problème de Pfaff}},
    journal={Annales Scientifiques de l'École Normale Supérieure},
    year={1899},
    volume={16},
    pages={239-332},
    }
    
@book{diffgeo,
    author={Conlon,L.},
    title={{Differentiable manifolds}},
    year={2001},
    publisher={Birkh\"auser Verlag},
    }
    
@article{Murat,
    author={Murat,F.},
title={{Compacit\'e par compensation: condition necessaire et suffisante de continuit\'e faible sous une hypoth\'ese de rang constant}},
journal={Ann. Sc. Norm. Sup. Pisa},
year={1981},
volume={8},
issue={4},
pages={69-102},
}

@inproceedings{Ball,
	address = {Berlin, Heidelberg},
	title = {{A version of the fundamental theorem for Young measures}},
	isbn = {978-3-540-46717-5},
	booktitle = {{PDEs} and {Continuum} {Models} of {Phase} {Transitions}},
	publisher = {Springer Berlin Heidelberg},
	author = {Ball, J. M.},
	editor = {Rascle, M. and Serre, D. and Slemrod, M.},
	year = {1989},
	pages = {207--215},
}

 @conference{Compcomp,
    author={Tartar,L.},
    title={{Compensated compactness and applications to partial differential equations}},
    booktitle={{Nonlinear Analysis and Mechanics: Heriot-Watt Symposium}},
    Volume={4},
    year={1979},
    pages={136-212},
    publisher={ Pitman Res. Notes Math},}

 @article{Sverak2,
    author={\v{S}ver\'{a}k, V.},
    title={{Lower-Semicontinuity of Variational Integrals and Compensated Compactness}},
    journal={Proceedings of the International Congress of Mathematicians},
    pages={1153-1158},
    publisher={Birkh\"auser Verlag},
    year={1995},
    }

@article{BFL,
author = {Braides, A. and Fonseca, I. and Leoni, G.},
journal = {ESAIM: Control, Optimisation and Calculus of Variations},
pages = {539-577},
publisher = {EDP Sciences},
title = {A-quasiconvexity : relaxation and homogenization},
url = {http://eudml.org/doc/90582},
volume = {5},
year = {2000},
}

@article{RindlerDP,
    title={{Characterization of Generalized Young Measures Generated by Symmetric Gradients}},
    author={Rindler, F. and De Philippis,G.},
    journal={Arch. Rat. Mech. Anal.},
    volume={224},
    pages={1087-1125},
    year={2017},
    }
@article{RindlerKris,
    title={Characterization of {Generalized} {Gradient} {Young} {Measures} {Generated} by {Sequences} in {W1},1 and {BV}},
    author={Rindler,F. and Kristensen,J.},
	journal={Arch. Rat. Mech. Anal.},
	volume={197},
	pages={539-598},
	year={2012},
	}
	
@article{Zhang2,
    title={{On various semiconvex hulls in the Calculus of Variations}},
    author={Zhang,K.},
    journal={Calc. Var.},
    volume={6},
    pages={143-160},
    year={1998},
}

@article{Zhang3,
    title={{Quasiconvex functions, SO(n) and two elastic wells}},
    author={Zhang,K.},
    journal={Ann. Inst. Henry Poincar\'e},
    volume={14},
    number={6},
    year={1997},
    pages={759-785},
    }
    
@article{Sverak3,
    title={{On the Problem of two wells}},
    author={\v{S}ver\'{a}k, V.},
    journal={Microstructure and Phase transitions},
    pages={183-189},
    year={1993},
    publisher={Springer Verlag},
    }
    
@article{Sverak4,
    author={\v{S}ver\'{a}k, V.},
    title={{On Tartar's conjecture}},
    pages={405-412},
    journal={Ann. Inst. Henry Poincar\'e},
    volume={10},
    number={4},
    year={1993},
    }

@book{Rindlerbook,
    author={Rindler,F.},
    title={{Calculus of Variations}},
    publisher={Springer International Publishing},
    year={2018},
    }

@article{Pedregal,
        title={{Gradient Young measures generated by sequences in Sobolev spaces}},
	author={Kinderlehrer, D. and Pedregal,P.},
	journal={J.  Geom. Anal.},
	volume={4},
	number={1},
	year={1994},
	pages={59-90}
	}

@article{Schiffer2,
    author={Schiffer, S.},
    title={{A-free truncations and A-quasiconvex hulls of compact sets}},
    journal={in progress},
    year={2021}
    }

@article{Kirchkris,
	title = {On {Rank} {One} {Convex} {Functions} that are {Homogeneous} of {Degree} {One}},
	volume = {221},
	issn = {1432-0673},
	url = {https://doi.org/10.1007/s00205-016-0967-1},
	doi = {10.1007/s00205-016-0967-1},
	abstract = {We show that positively 1-homogeneous rank one convex functions are convex at 0 and at matrices of rank one. The result is a special case of an abstract convexity result that we establish for positively 1-homogeneous directionally convex functions defined on an open convex cone in a finite dimensional vector space. From these results we derive a number of consequences including various generalizations of the Ornstein L1 non inequalities. Most of the results were announced in (C R Acad Sci Paris Ser I 349:407–409, 2011).},
	number = {1},
	journal = {Archive for Rational Mechanics and Analysis},
	author = {Kirchheim, Bernd and Kristensen, Jan},
	month = jul,
	year = {2016},
	pages = {527--558},
}

\end{filecontents*}
\end{document}